\newtheorem{theorem}{Theorem}[section]
\newtheorem{assumption}[theorem]{Assumption}
\newtheorem{proposition}[theorem]{Proposition}
\newtheorem{observation}[theorem]{Observation}
\theoremstyle{definition}
\newtheorem{definition}{Definition}[section]
\theoremstyle{definition}
\newtheorem{example}{Example}[section]
\theoremstyle{definition}
\theoremstyle{definition}
\title{Robust Data-Driven Quasiconcave Optimization}
\author[1]{Jian Wu}
\author[2]{William B. Haskell}
\author[3]{Wenjie Huang}
\author[4]{Huifu Xu}
\affil[1]{\footnotesize Department of Decision Analytics and Operations, City University of Hong Kong, Kowloon, Hong Kong, jwu424@cityu.edu.hk}
\affil[2]{\footnotesize Daniels' School of Business, Purdue University, West Lafayette, Indiana 47907, whaskell@purdue.edu}
\affil[3]{\footnotesize Department of Data and Systems Engineering \& Musketeers Foundation Institute of Data Science, The University of Hong Kong, Pokfulam, Hong Kong, huangwj@hku.hk}
\affil[4]{\footnotesize Department of Systems Engineering and Engineering Management, The Chinese University of Hong Kong,
Shatin, N.T., Hong Kong, hfxu@se.cuhk.edu.hk}
\date{}
\begin{document}

% \ARTICLEAUTHORS{%
% \AUTHOR{Jian Wu}
% \AFF{Krannert School of Management, Purdue University, West Lafayette, Indiana 47907, \EMAIL{wu1549@purdue.edu}, \URL{}}
% \AUTHOR{William B. Haskell}
% \AFF{Krannert School of Management, Purdue University, West Lafayette, Indiana 47907, \EMAIL{whaskell@purdue.edu}, \URL{}}
% \AUTHOR{Wenjie Huang}
% \AFF{Musketeers Foundation Institute of Data Science, Department of Industrial and Manufacturing Systems Engineering, The University of Hong Kong, Pokfulam, Hong Kong\EMAIL{huangwj@hku.hk}, \URL{}}
% \AUTHOR{Huifu Xu}
% \AFF{Department of Systems Engineering and Engineering Management, The Chinese University of Hong Kong,
% Shatin, N.T., Hong Kong
% , \EMAIL{hfxu@se.cuhk.edu.hk}, \URL{}}
% } 

% \KEYWORDS{preference robust optimization, quasiconcave choice functions, multi-attribute choice models}
% \MSCCLASS{}
% \ORMSCLASS{Primary: ; secondary: }
% \HISTORY{}

\maketitle

\begin{abstract}
We investigate a data-driven
quasiconcave maximization problem where information about
the objective function is limited to a finite sample of data points.
%is ambiguous.
We begin by defining an ambiguity set for admissible objective functions based on available partial information about the objective. This ambiguity set consists of those 
%{\color{purple}continuous}
quasiconcave functions that majorize a given data sample, and that satisfy additional functional properties (monotonicity, Lipschitz continuity, and permutation invariance).
We then formulate a robust optimization (RO) problem which maximizes the worst-case objective function over this ambiguity set.
Based on the quasiconcave structure in this problem, we explicitly construct the upper level sets of the worst-case objective at all levels. 
We can then solve the resulting RO problem efficiently by 
doing binary search over the upper level sets and solving a logarithmic number of convex feasibility problems. 
This numerical approach differs from traditional subgradient descent and support function based methods for this problem class. 
While these methods can be applied in our setting, 
%our
the binary search method displays superb finite 
convergence to the global optimum, whereas the others do not. This is primarily because binary search fully exploits the specific structure of 
the worst-case quasiconcave objective, which leads to an explicit and general convergence rate in terms of the number of convex optimization problems to be solved.
%{\color{red}HX: Since this is a quasiconvex optimization problem, it might be helpful to explain in a couple of sentences why we propose a new method rather than use traditional support function based methods or subgradient descent methods (nowadays gradient descent methods become very popular. Complexity might be a key point to emphasize.}
Our numerical experiments on a Cobb-Douglas production efficiency problem {\color{black} and a fair resource allocation problem} demonstrate the tractability of our approach.
\end{abstract}

\textbf{Keywords:} robust optimization, quasiconcave functions, upper level sets, binary search

\section{Introduction}

%We often face optimization problems where there is uncertainty about the problem data and the objective function is ambiguous.In this paper, we examine quasiconcave maximization problems where there is epistemic uncertainty caused by a lack of information about the true objective function.This is in contrast to aleatoric (stochastic) uncertainty, caused by known sources of random variability. Quasiconcavity is a substantial generalization of concavity, yet quasiconcave maximization problems still permit efficient numerical solution. Robust optimization (RO) is a conservative approach for optimization under uncertainty which maximizes the worst-case valuation over an ambiguity set of admissible objective functions. In this paper, we develop an RO approach for quasiconcave maximization.
We often face optimization problems where there is uncertainty about the problem data and the objective function is ambiguous.
In this paper, we examine quasiconcave maximization problems where there is epistemic uncertainty caused by a lack of information about the true objective function.
This is in contrast to aleatoric (stochastic) uncertainty, caused by known sources of random variability. We develop \textcolor{black}{a robust optimization (RO)} approach for such quasiconcave maximization problem which maximizes the worst-case valuation over an ambiguity set of admissible objective functions. Although quasiconcavity is a substantial generalization of concavity, our approach still permits efficient numerical solution, by exploiting the specific structure of the problem.
%{\color{red}HX: sounds good!}

% {\color{red}HX: shall we say throughout the paper? or make a formal assumption? WBH: Mention here in intro and repeat as a formal assumption in Section 2.}
We have a convex and compact set of feasible decisions 
%(inputs) 
${\cal Z} \subset \mathbb{R}^T$ for $T \geq 1$ and a convex and compact range of outcomes (outputs) ${\cal X} \subset \mathbb{R}^N$ for $N \geq 1$. {\color{black}We equip $\mathbb{R}^N$ with the component-wise partial order $\geq$, where $x \geq x'$ for $x, x' \in \mathbb{R}^N$ when $x_n \geq x_n'$ for all components $n=1,\ldots,N$.}
There is a vector-valued mapping $G : {\cal Z} \rightarrow {\cal X}$ which represents $N$ performance measures that depend on our decisions $z \in {\cal Z}$, where $G(z) = (g_1(z), \ldots, g_N(z))$ and where each $g_n : {\cal Z} \rightarrow \mathbb{R}$ is the component $n$ function. {\color{black}We suppose that $G$ is vector-valued concave, so for any $z,z'\in {\cal Z}$ we have $G(\lambda z+(1-\lambda) z') \geq 
\lambda G(z) +(1-\lambda) G(z')$ for all $\lambda \in (0,1)$.} For instance, $G(z)$ may be the production output for a set of different goods as a function of the material inputs. Alternatively, $G(z)$ could be the vector of rewards over a set of $N$ uncertain scenarios.

We interpret a quasiconcave $f : {\cal X} \rightarrow \mathbb{R}$ as a valuation function for $G(z)$ where larger values are preferred, so if $f$ 
%were 
was fully specified we would solve:
\begin{equation}
\label{prob:intro}
\max_{z \in {\cal Z}} f(G(z)).
\end{equation}
In our setting, $f$ is ambiguous so the valuation for $G(z)$ is not precisely known.
For instance, the objective function $f$ \textcolor{black}{in the multi-objective optimization problem~\eqref{prob:intro}} may be ambiguous due to incomplete information about the DM's preferences.
We only have an ambiguity set ${\cal F}$ of possible quasiconcave valuation functions $f$.
The resulting RO problem is:
\begin{equation}
\label{prob:robust_intro}
\max_{z \in {\cal Z}} \left\{\psi_{{\cal F}}(x) \triangleq \min_{f \in {\cal F}} f(G(z))\right\}.
\end{equation}
The objective $\psi_{{\cal F}}$ of Problem~\eqref{prob:robust_intro} is quasiconcave, since quasiconcavity is preserved under minimization.
We additionally suppose that ${\cal F}$ consists of $L-$Lipschitz continuous functions, and that $f(\hat{x}) > -\infty$ for all $f \in {\cal F}$ for some $\hat{x} \in {\cal X}$. In this case, $\psi_{{\cal F}}(x) > -\infty$ for all $x \in {\cal X}$ so Problem~\eqref{prob:robust_intro} is well-defined.

Problem~\eqref{prob:robust_intro} generalizes several existing RO models for convex optimization problems.
%{\color{red}HX:  List some of them.}
For instance, it recovers the cases where (i) $f$ is linear and ${\cal F}$ is a polyhedron; and (ii) $f$ is convex and ${\cal F}$ is determined by constraints on function values.
Many of the usual RO numerical methods depend on convex duality for tractable reformulation of the overall problem (as in robust linear programming). In these methods, we take the dual of the inner minimization problem, establish strong duality, and then obtain a single overall maximization problem. However, these methods cannot be applied here because the inner minimization in Problem~\eqref{prob:robust_intro} is a non-convex optimization problem (as the property of quasiconcavity is not preserved by convex combination). We have to develop an entirely new numerical approach to solve Problem~\eqref{prob:robust_intro}.

The properties of quasiconcavity/quasiconvexity have broad practical relevance. For instance, quasiconcavity/quasiconvexity are important in computer vision \citep{ke2006uncertainty,ke2007quasiconvex}, optimal control \citep{ning2019time}, and production economics \citep{bradley1974fractional,mukherjee_least_2024}.
Quasiconcavity is also the most general form of diversification-favoring behavior (for preferences over random rewards) in decision theory.
One of the most prevalent examples of a quasiconcave choice function is the certainty equivalent (see, e.g., \cite{Ben-Tal2007}). The indices of acceptability proposed by \cite{cherny2009new} are also quasiconcave.
\cite{brown2009satisficing} develop the class of satisficing measures, which are quasiconcave and are based on the idea of meeting a target (satisficing rather than optimizing).
\cite{frittelli2014risk} develop a theory of quasiconcave evaluation functions over distributions on $\mathbb{R}$, to represent preferences over lotteries.

Quasiconvexity is also prominent in finance (for random losses). \cite{mastrogiacomo2015portfolio} study portfolio optimization with quasiconvex risk measures, which are a weaker expression of diversification-favoring preferences compared to convex risk measures. They obtain the equivalent saddle-point formulation of this problem, and identify necessary and sufficient optimality conditions for an optimal portfolio.
\cite{brown2012aspirational} develop the class of aspirational measures for random rewards, which are quasiconcave on one domain and quasiconvex on another.

The theory of quasiconvex optimization has been thoroughly studied.
\cite{luenberger1968quasi} develops Lagrange multiplier conditions for the global solution of minimizing a quasiconvex function over a convex set.
This result is generalized to quasiconvex minimization over Banach spaces in \cite{penot2004surrogate}.
\cite{agrawal2020disciplined} present composition rules for quasiconvex functions, and develop the framework of `disciplined quasiconvex programming' (which is analogous to disciplined convex programming, see, e.g., \cite{grant2006disciplined}) from a base set of canonical quasiconvex/quasiconcave functions. They also develop a numerical implementation for this class of problems.

Several numerical methods for solving quasiconvex programming problems have been developed, several of which are first-order methods. %to do quasiconvex minimization.
\cite{plastria1985lower} develops a cutting plane method for quasiconvex minimization based on the lower subdifferential.
\cite{kiwiel2001convergence} studies minimization of quasiconvex functions on Hilbert spaces via subgradient descent methods. They identify conditions for the asymptotic convergence to the minimum value and to the set of optimal solutions. Under stronger conditions, they find stepsizes that yield $\epsilon-$optimal solutions. \cite{xu2001level} develops the level function method for quasiconvex minimization. This procedure is based on iteratively constructing a sequence of level functions to the objective, and then solving a surrogate problem which minimizes the level function. They show that this procedure converges asymptotically to an optimal solution. 
\cite{konnov2003convergence} also studies subgradient methods for quasiconvex minimization on Euclidean spaces, and obtains convergence rates to the optimal solution.
\cite{hu2015inexact} consider inexact subgradient methods for quasiconvex minimization, and establish both asymptotic and finite convergence to near-optimal solutions. \cite{hazan2015beyond} develop a stochastic gradient descent method for quasiconvex minimization. \cite{yu2019abstract} propose a new subgradient method for quasiconvex minimization based on perturbations of each successive search direction, and they demonstrate its convergence within a general framework. %{\color{black}{WJ: why do we divide them into two paragraphs (here and the next) ?}}
%Other numerical methods have also been developed for quasiconvex minimization.
\cite{grad2023relaxed} develop an inertial proximal point method for minimizing strongly quasiconvex functions, and establish asymptotic convergence of this procedure to an optimal solution.

%One of the most natural methods is 
Alternatively, it is natural to do binary search over the values of the objective function to solve quasiconvex programming problems.
This method requires solving a sequence of convex feasibility problems (see \cite{Boyd2004}). In particular, this is the method we develop in the present paper, where we search over the upper level sets of the worst-case objective of Problem~\eqref{prob:robust_intro}.

There is a vast literature on RO, and RO has been successfully applied to uncertain convex optimization problems (see, e.g., \cite{ben2002robust}).
However, robust quasiconcave maximization problems have not yet been extensively studied.
\cite{haskell2022preference} is the closest related work in this regard, it does preference robust optimization over mixtures of finitely many lotteries. Our present paper is for general robust quasiconcave maximization and applies to a much broader class of problems. The numerical method in our present paper is also much more efficient than the approach in \cite{haskell2022preference} for their particular setting.

Next we outline the main contributions of our present paper:
\begin{itemize}
\item (Modeling) We propose a new class of robust quasiconcave maximization problems where there is ambiguity (epistemic uncertainty) 
over the objective function due to lack of complete information.
This class of RO problems generalizes many robust convex optimization problems, and Problem~\eqref{prob:robust_intro} contains some well-studied RO problems as special cases.
We put special emphasis on the multivariate version of this problem, since there is often ambiguity about the cross-effects of different outputs in terms of the total valuation of $G(z)$. We construct a specific ambiguity set for the uncertain quasiconcave objective based on a data sample of points and key functional properties (monotonicity, Lipschitz continuity, and permutation invariance).

\item (Theory) We explicitly construct the upper level sets of the worst-case objective function, which reveals their polyhedral structure and their dependence on the problem data. We also connect with the theory of aspirational measures (see \cite{brown2012aspirational}) and show how to represent the worst-case objective in terms of a set of convex objective functions and targets. Conversely, we show how to construct a quasiconcave objective function from a given a set of convex functions and a family of targets.

\item (Computation) 
We propose a novel binary search  method for solving 
the robust quasiconcave maximization problem with a finite number
%convergence
iterations. Unlike
traditional subgradient based methods (e.g.~descent direction method, cutting plane method, and level function method), binary search fully exploits the specific and explicit structure of the worst-case quasiconcave objective function and searches over the upper level sets of the function by solving a convex feasibility problem for each level.
As such, it requires to solve a logarithmic number of convex optimization problems rather than an infinite number of problems in order to obtain an exact optimal solution to Problem~\eqref{prob:robust_intro}. 
Our numerical tests for a Cobb-Douglas production efficiency problem {\color{black} and a fair resource allocation problem} show the effectiveness of our approach.
%Our computational scheme is based on doing binary search over the upper level sets of the worst-case objective and solving a convex feasibility problem for each level. {\color{black}{WJ: The content is a bit repeated.}}
%We only need to solve a logarithmic number of convex optimization problems in order to obtain an optimal solution to Problem~\eqref{prob:robust_intro}. 
This approach is very efficient with respect to the amount of partial information (in terms of the number of data points) as well as the problem dimension.
\end{itemize}
%

%This
The rest of the paper is organized as follows.
In Section~\ref{sec:preliminaries}, we review some technical preliminaries.
Section~\ref{sec:robust} then develops a quasiconcave RO model and gives the details of the specific ambiguity set we use in this paper.
In Section~\ref{sec:upper}, we characterize the upper level sets of the worst-case objective function.
Section~\ref{sec:binary} presents a binary search algorithm and gives its convergence properties.
Section~\ref{sec:numerical} reports numerical experiments, and the paper concludes in Section~\ref{sec:conclusion}.
{\color{black} In Appendix~\ref{sec:alternative}, we develop an alternative representation result for our RO model.} In Appendix~\ref{sec:invariance}, we show how to incorporate the additional property of permutation invariance into our RO model.
All proofs are gathered in the Appendix.

\paragraph{Notation}
Let $\mathbb{R}^N$ be the set of $N-$dimensional Euclidean vectors, and let $\|\cdot\|_p$ be the $p-$norm on $\mathbb{R}^N$ for $1 \leq p \leq \infty$.
We use $\mathbb{R}_{\geq 0}^N$ and $\mathbb{R}_{> 0}^N$ to denote the set of vectors in $\mathbb{R}^N$ with all non-negative components and all strictly positive components, respectively (for $N = 1$ we just write $\mathbb{R}_{\geq 0}$ and $\mathbb{R}_{>0}$).
%Let $\bar{\mathbb{R}} \triangleq \{-\infty\} \cup \mathbb{R} \cup \{\infty\}$ denote the extended real line.
Let $\mathbb N_{\geq 1}$ denote the positive integers.
For any $I \in \mathbb N_{\geq 1}$, we let $[I] \triangleq \{1, 2, \ldots, I\}$ denote the running index.
We let $\textsf{val}(\cdot)$ denote the optimal value of an optimization problem.

\section{Preliminaries}
\label{sec:preliminaries}

We repeat the following technical assumptions on the problem ingredients ${\cal Z}$, ${\cal X}$, and $G$.

\begin{assumption}
\label{assu:convex}
(a) Both ${\cal Z}$ and ${\cal X}$ are convex and compact.

(b) $G$ is vector-valued concave, i.e., each $g_n : {\cal Z} \rightarrow \mathbb{R}$ is concave for all $n \in [N]$.
\end{assumption}

We work in the set of bounded and measurable functions $f : {\cal X} \rightarrow \mathbb{R}$, denoted $\mathfrak{F}$, equipped with the supremum norm $\|f\|_{\infty} \triangleq \sup_{x \in {\cal X}} |f(x)|$. 
We recall the definition of the following key functional properties for our model.

\begin{definition}
Let $f : {\cal X} \rightarrow \mathbb{R}$ be a function.

(a) $f$ is concave if $f(\lambda x_1 + (1-\lambda) x_2) \geq \lambda f(x_1) + (1-\lambda) f(x_2)$, for all $x_1, x_2 \in {\cal X}$ and $\lambda \in [0,1]$.

(b) $f$ is quasiconcave if $f(\lambda x_1 + (1-\lambda) x_2) \geq \min\{f(x_1), f(x_2)\}$, for all $x_1, x_2 \in {\cal X}$ and $\lambda \in [0,1]$.

(c) $f$ is monotone (non-decreasing) if $f(x_1) \leq f(x_2)$ for all $x_1, x_2 \in {\cal X}$ with $x_1 \leq x_2$, where the inequality is interpreted component-wisely.

(d) $f$ is $L-$Lipschitz continuous (with respect to the infinity norm) if $|f(x_1) - f(x_2)| \leq L \|x_1 - x_2\|_{\infty}$ for all $x_1, x_2 \in {\cal X}$. 
\end{definition}
\noindent
The definition of Lipschitz continuity is with respect to the $\infty-$norm, which corresponds to a $1-$norm constraint on the magnitude of the subgradient.

Under Assumption~\ref{assu:convex}, without loss of generality we can restrict $\mathfrak{F}$ to consist of functions $f : {\cal X} \rightarrow [a,b]$ for a compact interval $[a, b]$. Then, $\mathfrak{F}$ is uniformly bounded.
Suppose ${\cal F} \subset \mathfrak{F}$ is additionally $L-$Lipschitz, then the functions in ${\cal F}$ are also equi-continuous. By the Arzel\'a-Ascoli theorem,  ${\cal F}$ is relatively compact under the norm topology. Moreover, since ${\cal F}$ is closed, then it is compact. 
Let $\mathcal{F}_{\text{Co}} \subset \mathfrak{F}$ be the set of all monotone and concave functions $f : {\cal X} \rightarrow \mathbb{R}$.
Let $\mathcal{F}_{\text{QCo}} \subset \mathfrak{F}$ be the set of all monotone and quasiconcave functions $f : {\cal X} \rightarrow \mathbb{R}$.
Let ${\cal F}_{\text{Lip}}(L) \subset \mathfrak{F}$ be the set of $L-$Lipschitz continuous functions.

Next we recall the definition of an affine majorant and a subgradient of functions in $\mathcal{F}_{\text{Co}}$, and a kinked majorant and an upper subgradient of functions in $\mathcal{F}_{\text{QCo}}$.

{\color{black}
\begin{definition}
(a) Let $f \in \mathcal{F}_{\text{Co}}$, $x \in {\cal X}$, and $\xi \in \mathbb{R}^{N}$. Then $h : {\cal X} \rightarrow \mathbb{R}$ defined by $h(y) \triangleq f(x) + \langle \xi,\,y - x \rangle$ for all $y \in {\cal X}$ is an {\em affine majorant} of $f$ at $x$ if  $h(y) \geq f(y)$ for all $y \in {\cal X}$.
In that case, $\xi$ is said to be a {\em subgradient} of $f$ at $x$. 
The set of all subgradients of $f$ at $x$ is called the {\em subdifferential} and is denoted by $\partial f(x)$.

(b) Let $f \in \mathcal{F}_{\text{QCo}}$, $x \in {\cal X}$, and $\xi \in \mathbb{R}^{N}$. Then $h : {\cal X} \rightarrow \mathbb{R}$ defined by $h(y) \triangleq f(x) + \max\{ \langle \xi,\,y-x \rangle,\,0\}$ for all $y \in {\cal X}$ is a {\em kinked majorant} of $f$ at $x$ if  $h(y) \geq f(y)$ for all $y \in {\cal X}$.
In that case, $\xi$ is said to be an {\em upper subgradient} of $f$ at $x$. 
The set of all upper subgradients of $f$ at $x$ is called the {\em upper subdifferential}  and is denoted by $\partial^+ f(x)$, see \cite{plastria1985lower}.
\end{definition}
}
\noindent
Any kinked majorant has convex upper level sets and is automatically in $\mathcal{F}_{\text{QCo}}$. We have the following results which characterize $L-$Lipschitz quasiconcave functions in terms of kinked majorants. Let $\|\cdot\|_*$ be the dual norm to $\|\cdot\|$ on $\mathbb{R}^N$.
%\jw{JW: As a side mark, do you think our framework would work for $\|\|_2$ norm. I think the resulting interpolation problem becomes quadratic optimization. WBH: Do you mean, since then the [Lip] constraint is $\|\xi\|_2^2 \leq L^2$ which is quadratic? I think it should work if we make the appropriate modifications to the norms in Theorem~\ref{thm:char-qco-function} below, which underlies most of our later results. The major change will come in the interpolation problem, and its dual which we use to construct the upper level sets. Can we still compute the dual analytically in the quadratic case?}
We recall the following result on the characterization of quasiconcave functions from \cite{haskell2022preference}.

\begin{theorem}
\label{thm:char-qco-function}
Let $f \in \mathfrak{F}$. The following assertions hold.

(i) Suppose that $f$ has a kinked majorant everywhere in its domain.
Then, $f$ is quasiconcave, upper semi-continuous, and has a representation
\begin{equation}
f\left(x\right)=\inf_{j\in\mathcal{J}}h_{j}\left(x\right),\,\forall x\in\text{dom}\,f,\label{eq:Support-quasiconcave}
\end{equation}
where $\mathcal{J}$ is a (possibly infinite) index set and $h_{j}\left(x\right)=\max\left\{ \langle a_{j},\,x - x_{j}\rangle,\,0\right\} + b_j$ for all $j \in \mathcal{J}$ with constants $a_j \in \mathbb{R}^N$, $x_j \in \mathbb{R}^N$, and $b_j \in \mathbb{R}$.

(ii) Suppose $f$ is quasiconcave and $L-$Lipschitz continuous with respect to $\|\cdot\|_{\infty}$. Then $f$ has a representation (\ref{eq:Support-quasiconcave}) with $\|a_{j}\|_{1}\leq L$ for all $j\in\mathcal{J}$.

(iii) If $f$ has a representation (\ref{eq:Support-quasiconcave}), then
it is quasiconcave.
Moreover, if $a_j\geq 0$ for all $j\in\mathcal{J}$, then $f$ is non-decreasing. Conversely,
if $f$ is non-decreasing and quasiconcave, then there exists a set of kinked majorants $\{h_j\}_{j \in {\cal J}}$ with $a_j\geq 0$ such that representation (\ref{eq:Support-quasiconcave}) holds.

(iv) For any finite set $\Theta\subset \mathbb{R}^{d}$ and values $\left\{ v(\theta)\right\} _{\theta\in\Theta}\subset\mathbb{R}$,
$\hat{f}\text{ : }\mathbb{R}^{d}\rightarrow\mathbb{R}$ defined by
\begin{align*}
%\label{eq:hatfx}
\hat{f}\left(x\right)\triangleq\inf_{a,\,b} \quad & b 
\nonumber
\\
\text{s.t.} \quad & \max\left\{ \langle a,\,\theta - x\rangle,\,0\right\} + b \geq v(\theta),\,\forall\theta\in\Theta,\\
& \|a\|_{1} \leq L,\nonumber
\end{align*}
is quasiconcave. 
Furthermore, the graph of $\hat{f}$ is the (pointwise) minimum of all $L-$Lipschitz quasiconcave majorants
of $\left\{ \left(\theta,\,v(\theta)\right)\text{ : }\theta\in\Theta\right\} $. 
\end{theorem}

We interpret $f \in \mathfrak{F}$ as an evaluation function for $G(z)$, $f(G(z))$ can be understood as the score for $G(z)$.
In particular, $f$ reflects our preferences over outcomes in ${\cal X}$.
Then, we optimize over $G(z)$ over $z \in {\cal Z}$ by finding the one which is maximal with respect to $f$.

\begin{proposition}
\label{prop:quasiconcave}
Let $f \in \mathfrak{F}$. Then $\rho_f : {\cal Z} \rightarrow \mathbb{R}$ defined by $\rho_f(z) = f(G(z))$ for all $z \in {\cal Z}$ is quasiconcave.
\end{proposition}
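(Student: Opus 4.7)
The plan is to verify quasiconcavity directly from the definition, which for $\rho_f$ amounts to showing that for any $z_1, z_2 \in \mathcal{Z}$ and any $\lambda \in [0,1]$,
\[
\rho_f(\lambda z_1 + (1-\lambda) z_2) \;\geq\; \min\{\rho_f(z_1),\,\rho_f(z_2)\}.
\]
The argument will chain together two structural facts already available in the excerpt: the vector-valued concavity of $G$ from Assumption~\ref{assu:convex}(b), and the monotonicity and quasiconcavity of $f$ (the proposition is invoked for $f \in \mathcal{F}_{\mathrm{QCo}}$, so I will read ``$f \in \mathfrak{F}$'' as ``$f$ is in the ambient space with the monotone-quasiconcave properties used throughout the paper,'' since without monotonicity the statement is false in general).

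First, I will apply componentwise concavity of $G$: since each $g_n$ is concave, for every $n \in [N]$,
\[
g_n(\lambda z_1 + (1-\lambda) z_2) \;\geq\; \lambda g_n(z_1) + (1-\lambda) g_n(z_2),
\]
so that $G(\lambda z_1 + (1-\lambda) z_2) \geq \lambda G(z_1) + (1-\lambda) G(z_2)$ in the componentwise partial order on $\mathcal{X}$. Second, I will use monotonicity of $f$ to pass this inequality through $f$, obtaining
\[
f\bigl(G(\lambda z_1 + (1-\lambda) z_2)\bigr) \;\geq\; f\bigl(\lambda G(z_1) + (1-\lambda) G(z_2)\bigr).
\]
Third, I will apply quasiconcavity of $f$ to the right-hand side to get
\[
f\bigl(\lambda G(z_1) + (1-\lambda) G(z_2)\bigr) \;\geq\; \min\{f(G(z_1)),\, f(G(z_2))\} \;=\; \min\{\rho_f(z_1),\,\rho_f(z_2)\}.
\]
Concatenating these three inequalities yields the desired inequality for $\rho_f$.

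There is essentially no obstacle here beyond bookkeeping. The only subtle point worth flagging in the writeup is the role of monotonicity: vector-concavity of $G$ delivers a componentwise inequality in $\mathcal{X}$, not an equality, so we cannot pass it through $f$ without monotonicity. This is precisely why the paper restricts attention to $\mathcal{F}_{\mathrm{QCo}}$ (monotone quasiconcave functions) rather than quasiconcave functions alone, and it is also consistent with the interpretation of $f$ as a valuation in which larger component outputs are preferred.
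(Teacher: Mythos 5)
Your proof is correct and is essentially identical to the paper's own argument: componentwise concavity of $G$, then monotonicity of $f$ to pass the componentwise inequality through, then quasiconcavity of $f$. Your observation that monotonicity is genuinely needed (despite the statement's hypothesis being only $f \in \mathfrak{F}$) matches the paper, whose proof also invokes monotonicity of $f$ even though the proposition as stated omits it.
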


Based on Proposition~\ref{prop:quasiconcave}, we focus on optimizing $z \rightarrow f(G(z))$ with respect to monotone quasiconcave functions $f \in \mathfrak{F}$.
If $f \in \mathfrak{F}$ were fully specified, we would solve:
\begin{equation*}
%\label{prob:opt}
\mathscr{P}(f) : \max_{z \in {\cal Z}} f(G(z)).
\end{equation*}
Even under perfect information, this class of problems is non-convex. 
However, by Proposition~\ref{prop:quasiconcave}, the objective of $\mathscr{P}(f)$ is a quasiconcave maximization problem for any $f \in \mathfrak{F}$, which retains enough structure for efficient optimization (e.g., by binary search).

\section{Robust Optimization Problem}
\label{sec:robust}

We focus on the case where we want to optimize $G(z)$ but we only have partial information about the evaluation function $f$.
For instance, when $G$ is multivariate there is often ambiguity about the marginal contribution to the score of each component as a function of the others.
The choice of evaluation function $f$ amounts to scalarizing the multi-objective optimization problem over $G(z)$.
We let ${\cal F} \subset \mathfrak{F}$ denote an ambiguity set of possible quasiconcave functions. This set is a user input, and it reflects the available partial information.

Given an ambiguity set ${\cal F} \subset \mathfrak{F}$, we define the pointwise worst-case valuation function $\psi_{{\cal F}} : {\cal X} \rightarrow \mathbb{R}$ via:
%\begin{equation*}
$\psi_{{\cal F}}(x) \triangleq \inf_{f\in {\cal F}}f(x),\, \forall x \in {\cal X}$.
%\end{equation*}
Here, $\psi_{{\cal F}}$ is the worst-case valuation of an outcome $x$. Given $x \in {\cal X}$, we call $f^* \in {\cal F}$ such that $f^*(x) = \psi_{{\cal F}}(x)$ a worst-case valuation function 
%corresponding to 
at $x$. When the ambiguity set ${\cal F}$ is compact, the infimum is attainable.

Let $\wedge$ denote the point-wise minimum operation between $f, g \in \mathfrak{F}$ where $(f \wedge g)(x) \triangleq \min\{f(x), g(x)\}$ for all $x \in {\cal X}$.
We say that ${\cal F}$ is closed under $\wedge$ when $f, g \in {\cal F}$ imply $f \wedge g \in {\cal F}$.
Quasiconcavity, monotonicity, and Lipschitz continuity are all preserved by the operation $\wedge$ as summarized by the next proposition.

\begin{proposition}
Let ${\cal F} \subset \mathcal{F}_{\text{QCo}}$. Then: (i) $\psi_{{\cal F}} \in \mathcal{F}_{\text{QCo}}$; and (ii) if ${\cal F}$ is closed in the topology induced by the infinity norm under $\wedge$, then $\psi_{{\cal F}} \in {\cal F}$.
\end{proposition}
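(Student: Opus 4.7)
The plan is to establish the two parts separately, with part (i) following from direct unpacking of the definitions and part (ii) requiring an approximation-and-closure argument that leans on the $L$-Lipschitz standing assumption on $\mathcal{F}$ (stated earlier in the introduction/preliminaries).

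For part (i), I would first verify monotonicity: if $x_1 \leq x_2$ componentwise, then every $f \in \mathcal{F}$ satisfies $f(x_1) \leq f(x_2)$ (since $\mathcal{F} \subset \mathcal{F}_{\text{QCo}}$ and $\mathcal{F}_{\text{QCo}}$ consists of monotone functions), so taking the infimum on both sides preserves the inequality and yields $\psi_{\mathcal{F}}(x_1) \leq \psi_{\mathcal{F}}(x_2)$. For quasiconcavity, pick $x_1, x_2 \in \mathcal{X}$ and $\lambda \in [0,1]$. For every $f \in \mathcal{F}$, quasiconcavity of $f$ gives $f(\lambda x_1 + (1-\lambda) x_2) \geq \min\{f(x_1), f(x_2)\} \geq \min\{\psi_{\mathcal{F}}(x_1), \psi_{\mathcal{F}}(x_2)\}$. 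Taking the infimum over $f$ on the left gives the quasiconcavity inequality for $\psi_{\mathcal{F}}$. Together these give $\psi_{\mathcal{F}} \in \mathcal{F}_{\text{QCo}}$.

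For part (ii), the plan is to construct a sequence $\{f_k\}_{k \geq 1} \subset \mathcal{F}$ that converges uniformly to $\psi_{\mathcal{F}}$, and then invoke topological closure of $\mathcal{F}$ to conclude $\psi_{\mathcal{F}} \in \mathcal{F}$. First I would note that $\psi_{\mathcal{F}}$ is itself $L$-Lipschitz, because it is a pointwise infimum of functions that are each $L$-Lipschitz. Fix $\epsilon > 0$. Because $\mathcal{X}$ is compact, cover it by finitely many $\|\cdot\|_{\infty}$-balls of radius $\epsilon/(2L)$ centered at points $x_1, \ldots, x_N \in \mathcal{X}$. For each $i \in [N]$, select $f_i \in \mathcal{F}$ with $f_i(x_i) \leq \psi_{\mathcal{F}}(x_i) + \epsilon/2$, and set $f_\epsilon \triangleq f_1 \wedge \cdots \wedge f_N$. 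By the hypothesis that $\mathcal{F}$ is closed under $\wedge$ (applied inductively), $f_\epsilon \in \mathcal{F}$.

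The next step is to show $\|f_\epsilon - \psi_{\mathcal{F}}\|_{\infty} \leq C\epsilon$ for a constant $C$ independent of $\epsilon$. For any $y \in \mathcal{X}$, pick $x_i$ with $\|y - x_i\|_{\infty} \leq \epsilon/(2L)$. Then by construction $\psi_{\mathcal{F}}(y) \leq f_\epsilon(y) \leq f_i(y)$, and using the Lipschitz bounds on $f_i$ and on $\psi_{\mathcal{F}}$ at $x_i$,
\[
f_i(y) \leq f_i(x_i) + L\|y - x_i\|_{\infty} \leq \psi_{\mathcal{F}}(x_i) + \epsilon \leq \psi_{\mathcal{F}}(y) + \tfrac{3\epsilon}{2}.
\]
Hence $\|f_\epsilon - \psi_{\mathcal{F}}\|_{\infty} \leq 3\epsilon/2$. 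Letting $\epsilon = 1/k$ produces $f_k \in \mathcal{F}$ with $f_k \to \psi_{\mathcal{F}}$ in the supremum norm, and closure of $\mathcal{F}$ in this topology delivers $\psi_{\mathcal{F}} \in \mathcal{F}$.

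The main obstacle is the approximation step in part (ii): turning pointwise attainment of the infimum into a uniform approximation. Without some form of equicontinuity one cannot extend pointwise control on a finite net to uniform control on $\mathcal{X}$, which is precisely why the proof relies on the standing $L$-Lipschitz hypothesis on $\mathcal{F}$. Everything else is bookkeeping with the preservation properties of quasiconcavity and monotonicity under infima.
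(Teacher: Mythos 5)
Your argument is correct. A point of comparison worth making explicit: the paper never prints a proof of this proposition (the appendix contains proof blocks for Sections 2, 4, 5, and 6 but none for Section 3), so there is no official argument to match; what the paper implicitly relies on is the standing assumption, stated in the introduction and Section 2, that ${\cal F}$ consists of $L$-Lipschitz functions (whence, via Arzel\`a--Ascoli, ${\cal F}$ is compact in the sup norm), and that is precisely the hypothesis you invoke. Part (i) is the routine preservation-under-infimum argument. For part (ii), your finite-net construction --- near-minimizers at the net points, a finite $\wedge$ to stay inside ${\cal F}$, the $L$-Lipschitz bounds on both the selected $f_i$ and on $\psi_{{\cal F}}$ to upgrade pointwise to uniform control, then sup-norm closedness --- is complete and self-contained; the finiteness of $\psi_{{\cal F}}$, needed for its Lipschitz property, follows from the uniform boundedness of $\mathfrak{F}$ into $[a,b]$. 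You are also right that the Lipschitz (equicontinuity) hypothesis is genuinely needed rather than cosmetic: with only ${\cal F}\subset\mathcal{F}_{\text{QCo}}$ the claim fails, e.g.\ on ${\cal X}=[0,1]$ take $f_t(x)=1$ for $x\ge t$ and $f_t(x)=0$ for $x<t$ with $t\in(0,1)$; this family is closed under $\wedge$ and sup-norm closed (distinct members are at sup-distance $1$), yet its pointwise infimum, the indicator of $\{1\}$, is at distance $1$ from every member and hence not in ${\cal F}$. Two cosmetic quibbles only: your $N$ for the size of the net collides with the paper's ambient dimension $N$, and closedness under $\wedge$ should be cited inductively for finite minima, as you note in passing.
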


Our RO problem is:
\begin{equation}
\label{prob:robust}
\mathscr{P}({\cal F}) : \max_{z \in {\cal Z}} \psi_{{\cal F}}(G(z)) \equiv \max_{z \in {\cal Z}} \inf_{f \in {\cal F}} f(G(z)).
\end{equation}
For general ${\cal F}$, we have to write `inf' on the RHS of Eq.~\eqref{prob:robust} since the minimum may not be attained. When $\mathcal{F}$ is closed and 
$\mathfrak{F}$ is relatively compact, then $\mathcal{F}$ is compact (this will be the case for the specific ambiguity set we construct).
Problem~\eqref{prob:robust} is a quasiconvex optimization problem since $\psi_{{\cal F}}$ is monotone and quasiconcave.
In principle, if we are able to calculate a value and an upper subgradient of $\min_{f \in {\cal F}} f(G(z))$ at each $z$, then we can use the level function method to solve the problem.
Alternatively, we can theoretically solve Problem~\eqref{prob:robust} by doing binary search on its upper level sets and solving a sequence of convex optimization problems.
{\color{black}Specifically, since $z \rightarrow \psi_{{\cal F}}(G(z))$ is continuous and ${\cal Z}$ is compact,
there exist constants $v_{\min}$ and $v_{\max}$ such that
%we have 
$-\infty < v_{\min} \leq \psi_{{\cal F}}(G(z)) \leq v_{\max} < \infty$ for all $z \in {\cal Z}$.
In general we do not know the  constants a priori, in practice, we would start with conservative choices for both, take $v_{\min} = \psi_{{\cal F}}(G(z))$ for any initial feasible $z$ (since we are looking for the maximizer), and take $v_{\max}$ to be a conservative upper bound (so $v_{\max}$ would not be a tight bound in most cases).
For a candidate value $v \in [v_{\min}, v_{\max}]$, we can easily solve the feasibility problem
\begin{equation}
\label{eq:feasibility problem}
{\cal G}(v) : \{z \in {\cal Z} : \psi_{{\cal F}}(G(z)) \geq v\}.
\end{equation}
Since $\psi_{{\cal F}}(G(z))$ is quasiconcave (as the composition of an increasing quasiconcave function $\psi_{{\cal F}}(\cdot)$ with a concave one $G(\cdot)$), the set of solutions to ${\cal G}(v)$ is a convex set. Then, when the set of solutions is non-empty, we can efficiently find a feasible solution. If ${\cal G}(v)$ has a solution $z^*$, then we take $v \Leftarrow (v_{\max} + \psi_{{\cal F}}(G(z^*)))/2$. Otherwise, we take $v \Leftarrow (v_{\min} + v)/2$.
%Should we mention, according to 
In the forthcoming discussions (Theorem 5.3), we will show that 
%Theorem 5.3, this 
binary search will 
%stop 
terminate in finite (at most in $O(\log J)$ where $J$ is the number of data points) iterations.
The main computational difficulty 
%comes from 
%{\color{purple}
comes down to solving the feasibility problem \eqref{eq:feasibility problem}
where we do not have a closed form for the 
function $\psi_{{\cal F}}(G(z))$.
% This prompts us to develop a sequential upper level set approximation approach
% where we use kinked majorant of the function to construct a level set. 
This prompts us to develop an explicit representation of the upper level sets of $\psi_{{\cal F}}(G(z))$ through kinked majorants (Theorem~\ref{thm:upper_level}).
% {\color{green}HX: I was wondering 
% if we should mention somehow ``kinked majorant'' because ${\cal D}_{\kappa(v)}$  
% in Theorem~\ref{thm:upper_level}
% is identified via this.
% }
}
{\color{black} To this end, we consider a 
specifically structured ambiguity set ${\cal F}$.
}
%We now construct the specific class of ambiguity set to use in this paper.

%%%%%%%%%%%%%%%%%%%%%%%%%%%%%%%%
%4-Mar-2026 HX: I have made it a formal definition as we need it in the forthcoming discussions.
%%%%%%%%%%%%%%%%%%%%%%%%%%%%%%%
{\color{black}
\begin{definition}
\label{Def-D-set}
   Let $\Theta = \{ \theta_1, \ldots, \theta_J \} \subset {\cal X}$ be a set of test outcomes with known lower bounds $\hat{v}(\theta)$ on the value of the target function.
%That is, admissible $f$ must satisfy the constraints:
%
Let ${\cal F}\subset \mathscr{F}$ be such that for each $f\in {\cal F}$,
\begin{equation}
\label{eq:function_values}
f(\theta) \geq \hat{v}(\theta),\, \theta \in \Theta.
\end{equation}
%
%We then d
Define the data sample $\mathsf{D} = \{(\theta, \hat{v}(\theta))\}_{\theta \in \Theta}$.
\end{definition}
}
Note that the values $\hat{v}$ in $\mathsf{D}$ do not necessarily come from a function in $\mathcal{F}_{\text{QCo}}$, so we write this requirement as a set of inequalities rather than equalities to ensure it always gives a non-empty ambiguity set.

% {\color{red}HX: why do we 
% introduce problem 
% \eqref{prob:robust_main} here. The follow-up example is not well connected to this context.

% WBH: This was included as a specific example of how to construct the data sample $\mathsf{D}$. I have since moved this example earlier, right after we introduce the idea of the data sample, to better connect them.
% % Moreover, do we need to specify the property of $G(z)$?
% }

\begin{example}\label{ex:pro}
In \cite{haskell2022preference}, $\mathsf{D}$ is constructed by solving a mixed-integer linear program (MILP) which assigns the values of the worst-case evaluation function on $\Theta$.
For each $\theta \in \Theta$, let $v(\theta) \in \mathbb{R}$ correspond to the value and $s(\theta) \in \mathbb{R}^N$ correspond to an upper subgradient of the target function at $\theta$.
{\color{black}
%HX: do we implicitly assume 
%$v(\cdot)\in {\cal F}$? some clarification is needed.
%WBH: 
To clarify, here $v(\cdot)$ is not a function, we %are using 
use $v(\theta)$ to denote the entry of the vetor $v$ corresponding to $\theta$. 
%We used this notation instead of $v_{\theta}$ because the subscripts got too cumbersome.
}
Then, let $v = (v(\theta))_{\theta \in \Theta} \in \mathbb{R}^J$ 
%is
be a collection of values and $s = (s(\theta))_{\theta \in \Theta} \in \mathbb{R}^{J N}$. 
Let $\widehat{\Theta} = \Theta \times \Theta$ be the set of edges in $\Theta$. In addition, let $\mathscr{R} \subset \Theta \times \Theta$ be a set of pairs of inputs where we require $v(\theta) \geq v(\theta')$ for all $(\theta, \theta') \in \mathscr{R}$ (i.e., the value $v(\theta)$ at $\theta$ must be at least as large as the value $v(\theta')$ at $\theta'$ for all pairs $(\theta, \theta') \in \mathscr{R}$).
We then consider the MILP:
\begin{subequations}\label{prob:value_MILP}
\begin{eqnarray}
\min_{v,\,s}\, && \sum_{\theta \in \Theta}v(\theta)\label{prob:value_MILP-1}\\
\text{s.t.}\, && v(\theta)+\max\{ \langle s(\theta),\,\theta'-\theta\rangle,\,0\} \geq v(\theta'), \quad \forall\left(\theta,\,\theta'\right)\in\widehat{\Theta},\label{prob:value_MILP-2}\\
 && v(\theta) \geq v(\theta'), \quad  \forall\left(\theta,\,\theta'\right)\in \mathscr{R},\label{prob:value_MILP-3}\\
 && v(\theta) \geq \hat{v}(\theta), \quad \forall \theta \in \Theta,\label{prob:value_MILP-4}\\
 && \textcolor{black}{s(\theta) \geq 0,\, \|s(\theta)\|_{1} \leq L, \quad \forall \theta \in \Theta,} \label{prob:value_MILP-5}
\end{eqnarray}
\end{subequations}
{\color{black}Problem~\eqref{prob:value_MILP} is illustrated in Figure~\ref{fig:pro} where the blue dots represent the test outcomes and the red parts represent the kinked majorants of each $\theta$. The objective is to find the worst-case evaluation on $\Theta$. }Problem~\eqref{prob:value_MILP} will produce values that match a quasiconcave function due to {\color{black}Eq.~\eqref{prob:value_MILP-2}}. However, Problem~\eqref{prob:value_MILP} is non-convex and the MILP reformulation may require solving $O(2^J)$ LPs in the worst-case. \cite{jian2025efficient} develop an efficient sorting algorithm to solve Problem~\eqref{prob:value_MILP} which requires solving $O(J^2)$ LPs. 
\end{example}

\begin{figure}
    \centering
    \includegraphics[width=0.35\linewidth]{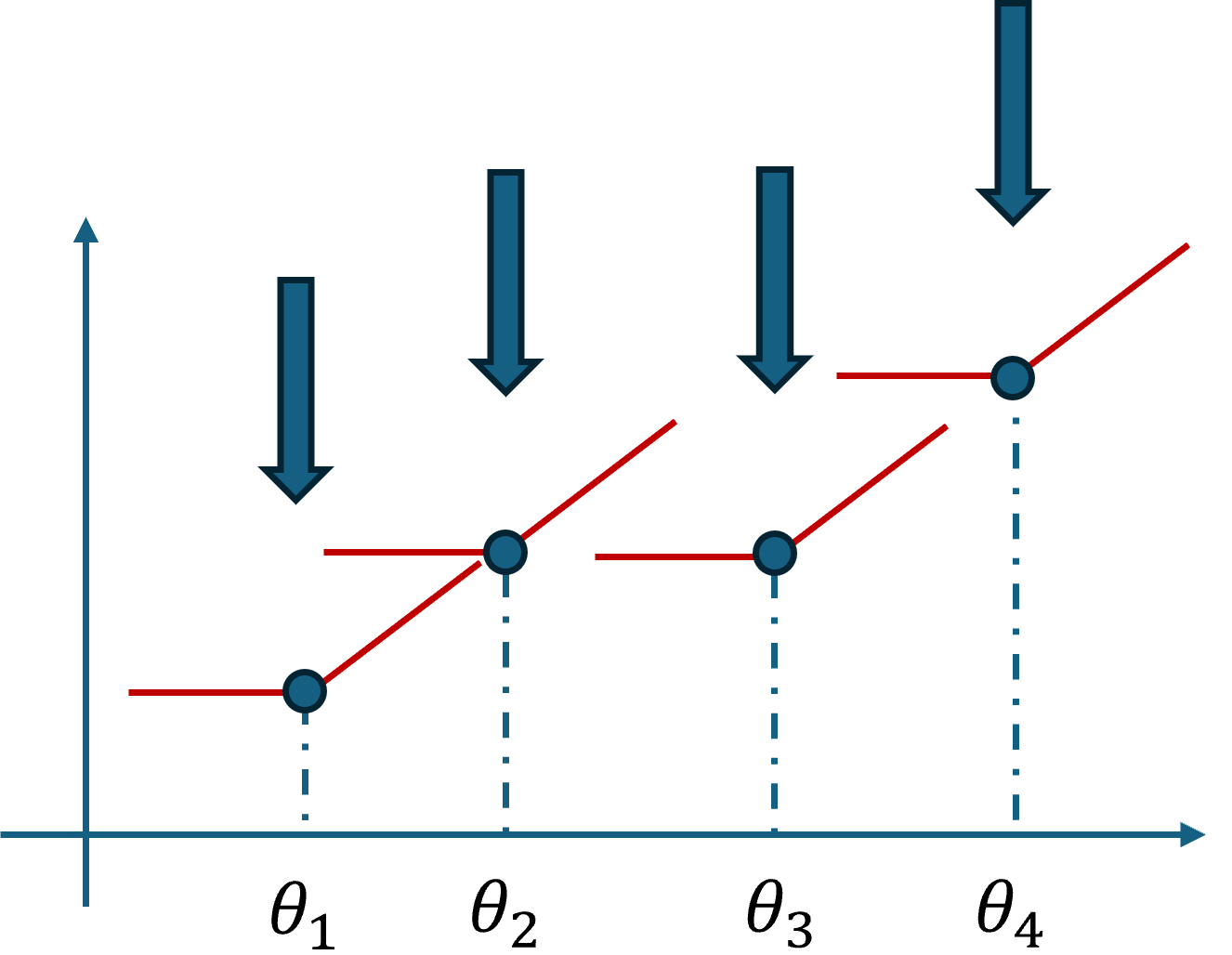}
    \caption{An illustration of Example~\ref{ex:pro} where $\Theta=\{\theta_1,\theta_2,\theta_3,\theta_4\}$ and $(\theta_2,\theta_3)\in\mathscr{R}$}
    \label{fig:pro}
\end{figure}

One notable feature of our framework is that $\mathsf{D}$ can essentially be arbitrary, and our procedure will still successfully construct the appropriate quasiconcave envelope.
If in fact $\mathsf{D}$ consists of points on the graph of a quasiconcave function, then $\psi_{{\cal U}}(\theta) = \hat{v}(\theta)$ will hold with equality for all $\theta \in \Theta$ by Theorem~\ref{thm:char-qco-function}(iv).
We focus on the particular ambiguity set
%3-mar-2026 HX:This is a main notion to be used in the forthcoming discussions, so I make it an equation
%\begin{equation}
%\label{eq:def-U}
${\cal U} = {\cal U}(\mathsf{D}, L) = \Big\{f \in \mathcal{F}_{\text{QCo}} \cap {\cal F}_{\text{Lip}}(L) : f(\theta) \geq \hat{v}(\theta),\, \forall \theta \in \Theta\Big\}$,
%\end{equation}
based on the value assignments in Eq.~\eqref{eq:function_values} and a Lipschitz continuity requirement.
This set corresponds to the $L-$Lipschitz, monotone, quasiconcave envelope of the data $\mathsf{D}$.
%We emphasize the specific RO problem:
{\color{black}Consequently, Problem~\eqref{prob:robust} can be written as}
\begin{equation}
\label{prob:robust_main}
\mathscr{P}({\cal U}) : \max_{z \in {\cal Z}} \psi_{{\cal U}} (G(z)).
\end{equation}
{\color{black}In the forthcoming discussions, we 
will develop efficient computational procedures to solve this optimization problem.
}
% 3-3-2026 HX: I cannot see the following two sentences are particularly useful. So I have deleted them. Feel free to put them back if I misunderstand.
%%%%%%%%%%%%%%%%%%%%%%%%%%%%%
% {\color{black}This problem 
% %is
% maximizes
% %ing 
% the worst-case valuation $\psi_{{\cal U}}$ over all achievable $G(z)$.}
% %the quasiconcave envelope of the data contained in ${\cal U}$, subject to additional functional properties.
% % {\color{red}HX: I don't understand this statement. ``maximizing the quasiconcave envelope of the data contained in ${\cal U}$'' is already encapsulated in 
% % $\psi_{\cal U}$. Here we are optimization decision $z$. WBH: I have rewritten the preceding.
% % }
% Note that in this case, there exists $f \in {\cal U}$ depending on $G(z)$ such that $\psi_{\cal U}(G(z)) = f(G(z))$.

\section{Upper Level Sets}
\label{sec:upper}

In this section we develop the representation of $\psi_{{\cal U}}$ in terms of its upper level sets.

\begin{definition}
Let $f \in \mathfrak{F}$ and $\upsilon \in \mathbb{R}$. Then ${\cal A}(f,\upsilon) \triangleq \{x \in {\cal X} : f(x) \geq \upsilon\}$ is the upper level set of $f$ at level $\upsilon \in \mathbb{R}$.
\end{definition}

We say that a set ${\cal A} \subset \mathbb{R}^N$ is monotone if $x \in {\cal A}$ and $y \geq x$ \textcolor{black}{(for component-wise inequality)} imply \textcolor{black}{$y \in {\cal A}$}. By definition of $\mathcal{F}_{\text{QCo}}$, the upper level sets ${\cal A}(f,\upsilon)$ for all $f \in \mathcal{F}_{\text{QCo}}$ 
{\color{black} and $v\in\mathbb{R}$}
are monotone and convex. In fact, this is the essential feature that allows us to solve Problem~\eqref{prob:robust_main} by searching over the upper level sets.
The next proposition shows that any $f \in \mathfrak{F}$ is completely determined by its upper level sets. It also establishes a key relation for the upper level sets of the worst-case valuation $\psi_{\cal F}$.

\begin{proposition}
\label{prop:level}
(i) 
%Suppose
Let $f \in \mathfrak{F}$. Then
%\begin{equation}
%\label{eq:level}
$f(x) = \sup\{\upsilon \in \mathbb{R} : x \in {\cal A}(f,\upsilon)\},\, \forall x \in {\cal X}$.
%\end{equation}

(ii) For $\mathcal{F} \subset \mathfrak{F}$, ${\cal A}(\psi_{\cal F}, \upsilon) = \cap_{f \in {\cal F}} {\cal A}(f, \upsilon)$ for all $\upsilon \in \mathbb{R}$.
\end{proposition}
\noindent
By Proposition~\ref{prop:level}(i), we can write $\mathscr{P}({\cal U})$ as:
%\begin{equation}
%\label{prob:robust_main_level}
$\max_{z \in \mathcal Z, \upsilon \in \mathbb{R}}\{ \upsilon : G(z) \in \mathcal{A}(\psi_{{\cal U}}, \upsilon) \}.$
%\end{equation}
This formulation is beneficial because we can then solve $\mathscr{P}({\cal U})$ {\color{black} by solving}
a sequence of convex feasibility problems of the form:
\begin{equation}
\label{prob:feasibility}
    \{ z \in {\cal Z} : G(z) \in \mathcal{A}(\psi_{{\cal U}}, \upsilon) \},
\end{equation}
where $\upsilon \in \mathbb{R}$ is fixed. Finding the largest $\upsilon$ for which Eq.~\eqref{prob:feasibility} is feasible is equivalent to solving $\mathscr{P}({\cal U})$.
Once we obtain the explicit form of $\{\mathcal{A}(\psi_{{\cal U}}, \upsilon)\}_{\upsilon \in \mathbb{R}}$, we can solve Eq.~\eqref{prob:feasibility} (or determine it is infeasible) for any $\upsilon$.

We now construct the upper level sets $\{{\cal A}(\psi_{{\cal U}}, \upsilon)\}_{\upsilon \in \mathbb{R}}$ for the specific ambiguity set ${\cal U}$ (for all levels). For this construction, we arrange our dataset $\mathsf{D}$ into a particular form without loss of generality.

% \jw{JW: We still need to arrange inputs according to $\psi_{\mathcal{F}}-$value instead of $\hat{v}-$value. They may not coincide. Therefore we have to do sorting first to get $\mathcal{D}_j$. }

% \wbh{WBH: In this case, because we do not have ranking constraints, I think we can work directly with the lower bounds and bypass the sorting algorithm. This is due to the construction of our upper level sets. For instance, suppose we have the following inputs and lower bounds: $\{(\theta_1 = 2, \hat{v}_1 = 3), (\theta_2 = 3, \hat{v}_2 = 1), (\theta_3 = 4, \hat{v}_3 = 5)\}$. When sorted in decreasing order of $\hat{v}$, we have $\{(\theta_3 = 4, \hat{v}_3 = 5), (\theta_1 = 2, \hat{v}_1 = 3), (\theta_2 = 3, \hat{v}_2 = 1)\}$. In this case, the lowest point $(\theta_2 = 3, \hat{v}_2 = 1)$ is redundant and does not contribute to the value of $\psi_{{\cal U}}$. When we construct our upper level sets based on $\tilde{\theta}_j\triangleq\theta_j-(\hat{v}(\theta_j)/L){\vec 1}_{N}$, the translation corresponding to $(\theta_2 = 3, \hat{v}_2 = 1)$ will be inside the convex hull of the other two higher points, and it will not change the shape of $\psi_{{\cal U}}$. This claim is not true however if we have ranking constraints (which do not appear in this paper), then we do have to directly determine the values at $\psi_{{\cal U}}$ first.}

\begin{definition}
For all $j \in [J]$, let $\mathcal{D}_j \triangleq \{ (\theta_1,\hat{v}(\theta_1)), \ldots,(\theta_j,\hat{v}(\theta_j))\}$ be the top $j$ inputs, in decreasing order of $\hat{v}-$value where $\hat{v}(\theta_1) \geq \hat{v}(\theta_2) \geq \cdots \geq \hat{v}(\theta_j)$. We say $\hat{v}(\theta_j)$ is the minimum $\hat{v}-$value in $\mathcal{D}_j$.
\end{definition}
\noindent
Under this convention, $\theta_1$ is the most preferred input based on the lower bounds with value $\upsilon_{\max} \triangleq \max\{\hat{v}(\theta) : \theta \in \Theta\} = \hat{v}(\theta_1)$.
Let $\mathcal{D}_{J} = \{(\theta_j, \hat{v}(\theta_j))\}_{j \in [J]}$ be the complete data set of inputs and value assignments, where $\hat{v}(\theta_1) \geq \hat{v}(\theta_2) \geq \cdots \geq \hat{v}(\theta_J)$.
There are no conditions on the value assignment $\hat{v}$ in $\mathsf{D}$, as long as the observations are sorted correctly in $\mathcal{D}_J$ in decreasing order of the lower bounds. Our approach will construct the quasiconcave envelope given any $\mathsf{D}$, once it is arranged into a sorted $\mathcal{D}_J$.

We next give an optimization formulation that computes the kinked majorant of $\mathcal{D}_j$.
This is part of the construction of the upper level sets of $\psi_{{\cal U}}$.
This problem is:
\begin{subequations}
\label{prob:interpolation}
\begin{align}
\mathscr{P}(x; \mathcal{D}_j) : \min_{\upsilon \in \mathbb{R}, \xi \in \mathbb{R}^N} \quad & \upsilon\\
{\rm s.t.} \quad & \upsilon + \max\{\langle \xi, \theta - x \rangle, 0\} \geq \hat{v}(\theta),\, \forall \theta \in \mathcal{D}_j,\label{prob:interpolation-2}\\
& \xi \geq 0,\, \|\xi\|_1 \leq L.
\end{align}
\end{subequations}
\underline{We slightly abuse notation and write $\theta \in \mathcal{D}_j$ to mean $(\theta,\hat{v}(\theta)) \in \mathcal{D}_j$}.
Problem~\eqref{prob:interpolation} finds the minimal $L-$Lipschitz kinked majorant $h(y) = \upsilon + \max\{\langle \xi, y - x \rangle, 0\}$ that dominates $\mathcal{D}_j$.
Problem $\mathscr{P}(x; \mathcal{D}_j)$ appears as part of our efficient algorithm in \cite{jian2025efficient} for solving Problem~\eqref{prob:value_MILP}.

Our construction is based on a level selection rule which maps a level $\upsilon$ to a subset of inputs in $\Theta$, which is then used to formulate the corresponding ${\cal A}(\psi_{{\cal U}}, \upsilon)$.  Since $\psi_{{\cal U}}$ is the quasiconcave envelope of $\mathsf{D}$, it will not attain any levels larger than $\upsilon_{\max}$.
We define $\hat{v}(\theta_{J+1}) = -\infty$ for the fictional input $\theta_{J+1}$ to make sure our upper level sets are well-defined for all levels (i.e., no specific $\theta_{J+1}$ ever actually appears in our procedure).

\begin{definition}[Level selection]
\label{Def:level-select}
Given $\upsilon \in (-\infty, \upsilon_{\max}]$, define $\kappa(\upsilon) \triangleq \{j\in [J] \mid \hat{v}(\theta_{j+1}) < \upsilon \leq \hat{v}(\theta_j)\}$. 
\end{definition}
\noindent
% The selection $j = \kappa(\upsilon)$ is unique under this convention since we must always choose $j$ with $\hat{v}(\theta_{j+1}) < \hat{v}(\theta_j)$. There may be $j$ with repeated values $\hat{v}(\theta_{j+1}) = \hat{v}(\theta_j)$, but such $j$ are never selected by $\kappa$. 
% If there are multiple indices which give the same value $\hat{v}(\theta_j)$, then $\kappa(\cdot)$ will return the largest such index. If $\upsilon > \upsilon_{\max}$, then $\kappa(\upsilon) = \emptyset$.
%
Essentially, the selection rule identifies only those points in $\mathsf{D}$ that matter in determining $\psi_{{\cal U}}(x)$, as shown in the following proposition.

\begin{proposition}
\label{prop:disjunctive}
Fix $x \in \mathbb{R}^N$ and let $\upsilon = \psi_{{\cal U}}(x)$. Then $\psi_{{\cal U}}(x) = \textsf{val}(\mathscr{P}(x; \mathcal{D}_j))$ for $j = \kappa(\upsilon)$.
\end{proposition}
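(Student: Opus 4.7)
The plan is to show the equality by sandwiching $\psi_{{\cal U}}(x)$ between $\textsf{val}(\mathscr{P}(x;\mathcal{D}_j))$ and $\textsf{val}(\mathscr{P}(x;\mathcal{D}_J))$, and then arguing that these two optimal values coincide once $j=\kappa(\upsilon)$.

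First I would appeal to Theorem~\ref{thm:char-qco-function}(iv), combined with (iii), to identify $\psi_{{\cal U}}(x) = \textsf{val}(\mathscr{P}(x;\mathcal{D}_J))$. Indeed, $\mathscr{P}(x;\mathcal{D}_J)$ is literally the program from \eqref{eq:hatfx} augmented with $\xi\ge 0$, and (iii) guarantees that the constraint $\xi\ge 0$ exactly captures monotonicity. Thus the optimal value of $\mathscr{P}(x;\mathcal{D}_J)$ is the pointwise minimum at $x$ of all monotone $L$-Lipschitz quasiconcave majorants of $\mathsf{D}$, which is precisely $\psi_{{\cal U}}(x)$.

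Next, since $\mathcal{D}_j\subseteq\mathcal{D}_J$, the problem $\mathscr{P}(x;\mathcal{D}_j)$ has a superset of feasible points, so $\textsf{val}(\mathscr{P}(x;\mathcal{D}_j))\le \textsf{val}(\mathscr{P}(x;\mathcal{D}_J)) = \upsilon$. The main work is the reverse inequality. Let $(\upsilon_j^*,\xi_j^*)$ be an optimizer of $\mathscr{P}(x;\mathcal{D}_j)$ and set $\upsilon'\triangleq \max\{\upsilon_j^*,\hat{v}(\theta_{j+1})\}$. I would verify that $(\upsilon',\xi_j^*)$ is feasible for $\mathscr{P}(x;\mathcal{D}_J)$: for $\theta\in\mathcal{D}_j$ the constraint \eqref{prob:interpolation-2} only gets easier when $\upsilon_j^*$ is replaced by the possibly larger $\upsilon'$; and for $\theta=\theta_k$ with $k>j$, the sorting $\hat{v}(\theta_k)\le \hat{v}(\theta_{j+1})\le \upsilon'$ combined with $h'(\theta_k)\ge \upsilon'$ (from the $\max\{\cdot,0\}$ term being nonnegative) makes the constraint automatic. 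Hence $\textsf{val}(\mathscr{P}(x;\mathcal{D}_J))\le \upsilon' = \max\{\upsilon_j^*,\hat{v}(\theta_{j+1})\}$.

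Now I would close the loop using $j=\kappa(\upsilon)$, which by Definition~\ref{Def:level-select} gives $\hat{v}(\theta_{j+1})<\upsilon$. Substituting $\upsilon=\textsf{val}(\mathscr{P}(x;\mathcal{D}_J))$ into the bound above yields $\upsilon\le \max\{\upsilon_j^*,\hat{v}(\theta_{j+1})\}$; since $\hat{v}(\theta_{j+1})<\upsilon$, the maximum must be attained by $\upsilon_j^*$, forcing $\upsilon_j^*\ge \upsilon$. Combining with the first-direction inequality gives $\textsf{val}(\mathscr{P}(x;\mathcal{D}_j)) = \upsilon = \psi_{{\cal U}}(x)$.

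The hardest step is the feasibility check for $(\upsilon',\xi_j^*)$ in $\mathscr{P}(x;\mathcal{D}_J)$, since $(\upsilon_j^*,\xi_j^*)$ itself need not dominate the discarded data points $\theta_{j+1},\ldots,\theta_J$; the trick of lifting $\upsilon_j^*$ to $\upsilon'$ exploits both the sorted ordering of $\mathsf{D}$ and the structural fact that a kinked majorant always lies above its base level. This is the place where the definition of $\kappa$ truly earns its keep: the strict inequality $\hat{v}(\theta_{j+1})<\upsilon$ is exactly what excludes the degenerate case $\upsilon_j^*<\hat{v}(\theta_{j+1})$ and lets us conclude $\upsilon_j^*\ge\upsilon$.
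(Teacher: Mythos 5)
Your proposal is correct and follows essentially the same route as the paper: identify $\psi_{{\cal U}}(x)=\textsf{val}(\mathscr{P}(x;\mathcal{D}_J))$ via the envelope characterization, then show that dropping the constraints for $\theta_{j+1},\ldots,\theta_J$ with $j=\kappa(\upsilon)$ does not change the optimal value. Your lifting argument with $\upsilon'=\max\{\upsilon_j^*,\hat{v}(\theta_{j+1})\}$ is simply a more explicit justification of the paper's brief claim that those constraints are non-binding.
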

\noindent
As a consequence of Proposition~\ref{prop:disjunctive}, by solving $\mathscr{P}(x; \mathcal{D}_j)$ for index $j = \kappa(\upsilon)$, we obtain a 
 kinked majorant of $\psi_{{\cal U}}$ at $x$.
So, it is also possible to apply the level function method proposed in \cite{xu2001level}.
%{\color{red} HX: Is it right that by solving problem \eqref{prob:interpolation_full}, we obtain a kinked majorant of  $\psi_{{\cal U}}(x)$ at $x$?}

Now we define an optimization problem based on the affine majorant. For given $j \in [J]$ and a candidate point $x \in {\cal X}$, we define:
\begin{subequations}
\label{eq:descent}
\begin{align}
\mathscr{P}_{LP}(x;\,\mathcal{D}_j) : \min_{\upsilon \in \mathbb{R}, \xi \in \mathbb{R}^N} \quad & \upsilon \label{eq:descent-1}\\
\textrm{s.t.} \quad & \upsilon+\left\langle \xi, \theta-x\right\rangle \geq \hat{v}(\theta),\,\forall \theta \in \mathcal{D}_j, \label{eq:descent-2}\\
& \xi \geq 0,\left\|\xi\right\|_{1} \leq L. \label{eq:descent-3}
\end{align}
\end{subequations}
Problem $\mathscr{P}_{LP}(x;\,\mathcal{D}_j)$ finds the smallest $L-$Lipschitz affine majorant $h(y) = \upsilon + \langle \xi, y - x \rangle$ at $x$ that dominates the values in $\mathcal{D}_j$.
Equivalently, $\mathscr{P}_{LP}(x;\,\mathcal{D}_j)$ returns the smallest $L-$Lipschitz affine majorant at $x$ that dominates the convex hull of $\mathcal{D}_j$.

\begin{proposition}\label{prop:interpolation-value-check}
Fix $x \in \mathbb{R}^N$ and let $\upsilon = \psi_{{\cal U}}(x)$. Then $\psi_{{\cal U}}(x) = \min\{\hat{v}(\theta_j), \textsf{val}(\mathscr{P}_{LP}(x; \mathcal{D}_j))\}$ for $j = \kappa(\upsilon)$.
\end{proposition}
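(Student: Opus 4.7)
The plan is to use Proposition~\ref{prop:disjunctive} as the anchor and then compare the optimal values of the kinked-majorant program $\mathscr{P}(x;\mathcal{D}_j)$ with the affine-majorant program $\mathscr{P}_{LP}(x;\mathcal{D}_j)$. Let $V \triangleq \textsf{val}(\mathscr{P}(x;\mathcal{D}_j))$ and $V_{LP} \triangleq \textsf{val}(\mathscr{P}_{LP}(x;\mathcal{D}_j))$. By Proposition~\ref{prop:disjunctive}, $\psi_{{\cal U}}(x) = V$, so it suffices to establish $V = \min\{\hat{v}(\theta_j),\,V_{LP}\}$.

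For the inequality $V \leq \min\{\hat{v}(\theta_j),\,V_{LP}\}$, I would argue each piece separately. First, because $\max\{a,0\} \geq a$ for every scalar $a$, any $(\upsilon,\xi)$ feasible in $\mathscr{P}_{LP}(x;\mathcal{D}_j)$ automatically satisfies the kinked constraint \eqref{prob:interpolation-2}, so the feasible set of $\mathscr{P}$ contains that of $\mathscr{P}_{LP}$, giving $V \leq V_{LP}$. Second, since $\upsilon = \psi_{{\cal U}}(x) = V$ and $j = \kappa(V)$, the definition of the level-selection rule (Definition~\ref{Def:level-select}) directly yields $V \leq \hat{v}(\theta_j)$.

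The reverse inequality $V \geq \min\{\hat{v}(\theta_j),\,V_{LP}\}$ is the substantive step. I would split into cases. If $V = \hat{v}(\theta_j)$, then $\min\{\hat{v}(\theta_j),\,V_{LP}\} \leq \hat{v}(\theta_j) = V$ and we are done. Otherwise $V < \hat{v}(\theta_j)$, and the key observation is: for \emph{every} $\theta \in \mathcal{D}_j$, the sorting convention gives $\hat{v}(\theta) \geq \hat{v}(\theta_j) > V$. Letting $(V,\xi^*)$ be an optimal pair for $\mathscr{P}(x;\mathcal{D}_j)$, the constraint $V + \max\{\langle \xi^*,\,\theta-x\rangle,0\} \geq \hat{v}(\theta) > V$ forces $\max\{\langle \xi^*,\,\theta-x\rangle,0\} > 0$, so $\langle \xi^*,\,\theta-x\rangle > 0$ and the affine constraint $V + \langle \xi^*,\,\theta-x\rangle \geq \hat{v}(\theta)$ is satisfied at $\theta$. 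This holds simultaneously for all $\theta \in \mathcal{D}_j$, so $(V,\xi^*)$ is feasible in $\mathscr{P}_{LP}(x;\mathcal{D}_j)$, yielding $V_{LP} \leq V$ and hence $\min\{\hat{v}(\theta_j),\,V_{LP}\} \leq V$.

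The main obstacle is the last case of the second direction: one has to notice that when $V$ is strictly below the smallest data value $\hat{v}(\theta_j)$ in $\mathcal{D}_j$, the kinked constraint cannot be satisfied by the ``flat'' branch for any $\theta$, so the $\max$ must be active everywhere. This collapses the kinked program to the affine one on the active side and drives the equality. The rest is bookkeeping of cases and a short feasibility-containment argument.
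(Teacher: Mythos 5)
Your proposal is correct and follows essentially the same route as the paper's proof: both anchor on Proposition~\ref{prop:disjunctive}, use the feasibility containment to get $\textsf{val}(\mathscr{P}(x;\mathcal{D}_j)) \leq \textsf{val}(\mathscr{P}_{LP}(x;\mathcal{D}_j))$, and split on whether $\upsilon < \hat{v}(\theta_j)$ or $\upsilon = \hat{v}(\theta_j)$, with the key point in the first case being that the sorted data force the $\max$ to be strictly active so the kinked and affine programs coincide. Your write-up merely reorganizes this as two inequalities and is, if anything, slightly more explicit than the paper's case (i).
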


The previous proposition leads to the characterization of ${\cal A}(\psi_{{\cal U}}, \upsilon)$ by linear programming duality, since we can use $\mathscr{P}_{LP}(x;\,\mathcal{D}_j)$ to check membership of the upper level sets of $\psi_{{\cal U}}$.
In particular, given $x \in {\cal X}$, we can check if $x \in {\cal A}(\psi_{{\cal U}}, \upsilon)$ by solving $\mathscr{P}_{LP}(x;\,\mathcal{D}_j)$ for $j = \kappa(\upsilon)$ and computing $\psi_{{\cal U}}(x) = \min\{\hat{v}(\theta_j), \textsf{val}(\mathscr{P}_{LP}(x; \mathcal{D}_j))\}$.

The following result gives the upper level sets for $\psi_{{\cal U}}$.
Let ${\vec 1}_{N} \in \mathbb{R}^{N}$ be the vector with all components equal to one.
We define translations of the inputs in $\Theta$ by their values according to $\tilde{\theta}_j\triangleq\theta_j-(\hat{v}(\theta_j)/L){\vec 1}_{N}$ (where we subtract $\hat{v}(\theta_j)/L$ from $\theta_j$ component-wise) for all $j \in [J]$.
{\color{black} This translation is a convenient way to rewrite the characterization of the upper level sets obtained from the dual of $\mathscr{P}_{LP}(x; {\cal D}_j)$. In particular, the Lipschitz continuity constraint $\|\xi\|_1\le L$ is the driver of the additive $(\upsilon/L){\vec 1}_N$ term in Theorem~\ref{thm:upper_level}. This effect is equivalently represented by shifting each sample point $\theta_j$ to $\tilde\theta_j$.}
Unlike level function or subgradient methods, which need the function value and an upper gradient at a point $x$, we derive the representation for the entire upper level sets of $\psi_{{\cal U}}$ over all ${\cal X}$.

\begin{theorem}\label{thm:upper_level}
The upper level set of $\psi_{{\cal U}}$ at level $\upsilon \in (-\infty, \upsilon_{\max}]$ satisfies:
\begin{equation*}
\mathcal{A}(\psi_{{\cal U}}, \upsilon) = \left\{x \in {\cal X} \;\middle|\; {\color{black} \exists \, p \in \mathbb{R}^{\kappa(\upsilon)}_{\geq 0} \text{ such that }} x \geq  \sum_{\theta\in \mathcal{D}_{\kappa(\upsilon)}} \tilde{\theta} \cdot p_\theta + (\upsilon/L) {\vec 1}_{N} ,\,
\sum_{\theta\in \mathcal{D}_{\kappa(\upsilon)}} p_\theta =1\right\}.
\end{equation*}
\end{theorem}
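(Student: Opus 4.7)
The plan is to translate membership in $\mathcal{A}(\psi_{{\cal U}}, \upsilon)$ into an LP feasibility question, and then read off the conclusion via LP duality applied to $\mathscr{P}_{LP}(x;\mathcal{D}_{\kappa(\upsilon)})$. Fix $\upsilon \in (-\infty,\upsilon_{\max}]$ and set $j = \kappa(\upsilon)$. By Definition~\ref{Def:level-select}, $\upsilon \leq \hat{v}(\theta_j)$, so Proposition~\ref{prop:interpolation-value-check} yields
$$\psi_{{\cal U}}(x) \geq \upsilon \;\Longleftrightarrow\; \textsf{val}(\mathscr{P}_{LP}(x;\mathcal{D}_j)) \geq \upsilon.$$
Thus $x \in \mathcal{A}(\psi_{{\cal U}},\upsilon)$ if and only if the primal LP~\eqref{eq:descent} attains optimal value at least $\upsilon$.

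Next, I would compute the LP dual of $\mathscr{P}_{LP}(x;\mathcal{D}_j)$. Attaching dual multipliers $p_\theta \geq 0$ to the majorant inequalities \eqref{eq:descent-2} and $\lambda \geq 0$ to $\sum_n \xi_n \leq L$, the free variable $u$ (the primal objective) yields the equality $\sum_\theta p_\theta = 1$, and $\xi \geq 0$ gives the inequalities $\sum_\theta p_\theta(\theta_n - x_n) \leq \lambda$ for each $n \in [N]$. The dual is therefore
$$\max\; \sum_{\theta\in\mathcal{D}_j} p_\theta \hat{v}(\theta) - \lambda L \quad\text{s.t.}\quad \sum_\theta p_\theta = 1,\; x \geq \sum_\theta p_\theta\,\theta - \lambda\,{\vec 1}_N,\; p\geq 0,\;\lambda\geq 0.$$
Since the primal is feasible (e.g.\ $\xi = 0$, $u = \max_\theta \hat{v}(\theta)$) and bounded below (as $\|\xi\|_1 \leq L$ keeps the RHS of \eqref{eq:descent-2} bounded), LP strong duality applies and the dual optimum equals $\textsf{val}(\mathscr{P}_{LP}(x;\mathcal{D}_j))$.

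Then I would characterize when the dual is $\geq \upsilon$: it holds iff there exists a dual-feasible pair $(p,\lambda)$ with $\sum_\theta p_\theta \hat{v}(\theta) - \lambda L \geq \upsilon$, i.e.\ $\lambda \leq \bigl(\sum_\theta p_\theta \hat{v}(\theta) - \upsilon\bigr)/L$. Crucially, for every $\theta \in \mathcal{D}_{\kappa(\upsilon)}$ we have $\hat{v}(\theta) \geq \hat{v}(\theta_j) \geq \upsilon$ by the sorting convention, so any probability vector $p$ over $\mathcal{D}_j$ satisfies $\sum_\theta p_\theta \hat{v}(\theta) \geq \upsilon$; in particular the above upper bound on $\lambda$ is nonnegative. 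Hence, for any feasible $p$, the largest admissible $\lambda$ is exactly $\lambda^\star(p) = \bigl(\sum_\theta p_\theta \hat{v}(\theta) - \upsilon\bigr)/L$, and this choice makes the lower bound $\sum_\theta p_\theta\,\theta - \lambda\,{\vec 1}_N$ on $x$ component-wise as small as possible.

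Finally, substituting $\lambda = \lambda^\star(p)$ into the constraint $x \geq \sum_\theta p_\theta\,\theta - \lambda\,{\vec 1}_N$ gives
$$x \;\geq\; \sum_{\theta\in\mathcal{D}_j} p_\theta\left(\theta - \frac{\hat{v}(\theta)}{L}{\vec 1}_N\right) + \frac{\upsilon}{L}{\vec 1}_N \;=\; \sum_{\theta\in\mathcal{D}_{\kappa(\upsilon)}} p_\theta\,\tilde{\theta} + \frac{\upsilon}{L}{\vec 1}_N,$$
using the definition $\tilde{\theta} = \theta - (\hat{v}(\theta)/L){\vec 1}_N$. Since this reduction is tight (no larger $\lambda$ is admissible, and smaller $\lambda$ only makes the bound on $x$ harder to satisfy), the existence of a feasible $(p,\lambda)$ with dual value $\geq \upsilon$ is equivalent to the existence of $p \in \mathbb{R}^{\kappa(\upsilon)}_{\geq 0}$ with $\sum_\theta p_\theta = 1$ and $x$ dominating the displayed lower bound, which is the claimed description of $\mathcal{A}(\psi_{{\cal U}},\upsilon)$. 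The main obstacle I anticipate is purely bookkeeping: getting the dual signs and the substitution $\lambda = \lambda^\star(p)$ correct while verifying that the automatic inequality $\sum_\theta p_\theta \hat{v}(\theta) \geq \upsilon$ on $\mathcal{D}_{\kappa(\upsilon)}$ is what makes the constraint $\lambda \geq 0$ redundant, so that no extra condition survives in the final description.
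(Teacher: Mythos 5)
Your proposal is correct and follows essentially the same route as the paper's own proof: reduce membership in $\mathcal{A}(\psi_{{\cal U}},\upsilon)$ to $\textsf{val}(\mathscr{P}_{LP}(x;\mathcal{D}_{\kappa(\upsilon)}))\geq\upsilon$ via Proposition~\ref{prop:interpolation-value-check}, invoke LP strong duality (primal feasibility and boundedness), and then eliminate the dual slack variable to obtain the polyhedral description in terms of $\tilde{\theta}$. Your explicit observation that $\hat{v}(\theta)\geq\upsilon$ on $\mathcal{D}_{\kappa(\upsilon)}$ makes the constraint $\lambda\geq 0$ (the paper's $q\geq 0$) non-binding is a nice touch that the paper glosses over, but it is a refinement of the same argument rather than a different approach.
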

\noindent
By Theorem~\ref{thm:upper_level}, the upper level sets of $\psi_{{\cal U}}$ all have a polyhedral structure. 
Moreover, due to our finite data sample $\mathsf{D}$ and {\color{black}Propositions~\ref{prop:level} - \ref{prop:interpolation-value-check}}, 
%%%%%%%%%%%%%%%%%%%%%%%%%%%%%%%
%4-Mar-2026 HX: I don't think Propositions~\ref{prop:level} is adequate for the claim. We need Propositions~\ref{prop:level}-\ref{prop:interpolation-value-check}.
%Please double check and change back if I misunderstand.
%%%%%%%%%%%%%%%%%%%%%%%%%%%%%%
%{\color{red}
%\underline{it follows that 
$\psi_{{\cal U}}$ is piecewise linear. {\color{black} Figure~\ref{fig:upper-level-set}b in Section~\ref{sec:numerical} illustrates such upper level sets of the worst-case function with two inputs.}

Checking the inclusion $x \in {\cal A}(\psi_{{\cal U}}, \upsilon)$ can be done by solving a linear feasibility problem.
In view of $\tilde{\theta}$ as a translation of $\theta$, we can interpret ${\cal A}(\psi_{{\cal U}}, \upsilon)$ as the smallest monotone polyhedron that contains the convex hull of all translations $\tilde{\theta}$ of $\theta \in \Theta$ for which $\hat{v}(\theta) \geq \upsilon$.
If a given $\theta_j$ has a very low assigned value $\hat{v}(\theta_j)$, then its translation $\tilde{\theta}_j$ is not very ``deep''. It will be contained in the convex hull of other $\tilde{\theta}_{j'}$ and not change the form of the upper level set.

\section{Binary Search}
\label{sec:binary}

{\color{black} We now turn attention back to Problem~\eqref{prob:robust_main}, where we are optimizing an ambiguous quasiconcave objective over a convex feasible region. The previous Section~\ref{sec:upper} focused on characterizing the upper level sets of $\psi_{\cal U}$. Under our concavity assumption on $G(z)$, we can check its membership in the upper level sets of $\psi_{\cal U}$ by solving a convex optimization problem. This feature is what makes binary search possible for our problem.}

We now discuss an algorithm to solve Problem~\eqref{prob:robust_main}. Traditional cutting plane, level function, or subgradient descent methods can in principle be applied to solve this problem. \textcolor{black}{These include, for example, supporting-hyperplane/cutting-plane-type methods and subgradient-type methods developed for quasiconvex/quasiconcave optimization.} However, there are two difficulties with applying these traditional methods to Problem~\eqref{prob:robust_main}. First, all of them require us to evaluate $\psi_{{\cal U}}$ many times (which requires solving a hard non-convex optimization problem each time) to obtain the necessary function value and first-order information. Second, all of these traditional methods have asymptotic convergence guarantees, but not convergence rate results for Problem~\eqref{prob:robust_main}. \textcolor{black}{In our setting, this repeated evaluation for $\psi_{\cal U}$ becomes the main computational bottleneck.}
Instead, we use a binary search algorithm which searches over the upper level sets of $\psi_{{\cal U}}$. Theorem~\ref{thm:upper_level} shows that the upper level sets of $\psi_{{\cal U}}$ have a polyhedral structure that depends on the level $\upsilon$ through $\kappa(\upsilon)$, and that they can be explicitly characterized. This means that to solve Problem~\eqref{prob:robust_main} we only have to solve a sequence of convex feasibility problems, and we obtain the explicit logarithmic convergence rate inherent to binary search.

Because of the mapping $\kappa(\upsilon)$, the search over all $\upsilon \in (-\infty, \upsilon_{\max}]$ can be broken up into separate problems for each $j \in [J]$. Recall $\tilde{\theta}_j\triangleq\theta_j-(\hat{v}(\theta_j)/L){\vec 1}_{N}$ for each $j \in [J]$, then we define:
{\color{black}
\begin{subequations}
\label{prob:robust-G-D-j}
\begin{align}
\mathscr G(\mathcal{D}_j) : \max_{z \in {\cal Z},\, \upsilon \in \mathbb{R},\, p \in \mathbb{R}_{\geq 0}^j} \quad & \upsilon\\
\text{s.t.} \quad & G(z) \geq \sum_{\theta\in \mathcal{D}_j} \tilde{\theta} \cdot p_\theta + (\upsilon/L) {\vec 1}_{N},\\
& \sum_{\theta\in \mathcal{D}_j} p_\theta =1.
\end{align}
\end{subequations}}
The form of the constraints of $\mathscr G(\mathcal{D}_j)$ follows from the characterization of $\mathcal{A}(\psi_{{\cal U}}, \upsilon)$ in Theorem~\ref{thm:upper_level}. We make an immediate observation that $\textsf{val}(\mathscr G(\mathcal{D}_{j}))$ is monotone in $j$ since any feasible solution for $\mathscr G(\mathcal{D}_{j'})$ for $j' \leq j$ can be extended to a feasible solution for $\mathscr G(\mathcal{D}_{j})$ with the same optimal value.
\begin{observation} \label{obs:monotone}
For any $1\leq j'\leq j \leq J$, $\textsf{val}(\mathscr G(\mathcal{D}_{j'})) \leq \textsf{val}(\mathscr G(\mathcal{D}_{j}))$.
\end{observation}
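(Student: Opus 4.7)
The plan is to prove the monotonicity by a direct feasibility extension argument, exactly as hinted in the paragraph preceding the observation. Since $\mathcal{D}_{j'} \subseteq \mathcal{D}_j$ whenever $j' \leq j$, the idea is to take any feasible triple for $\mathscr G(\mathcal{D}_{j'})$ and lift it to a feasible triple for $\mathscr G(\mathcal{D}_j)$ without changing the objective value $\upsilon$.

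Concretely, I would let $(z, \upsilon, p)$ with $p \in \mathbb{R}_{\geq 0}^{j'}$ be an arbitrary feasible solution of $\mathscr G(\mathcal{D}_{j'})$, and define $\tilde{p} \in \mathbb{R}_{\geq 0}^{j}$ by $\tilde{p}_\theta = p_\theta$ for $\theta \in \mathcal{D}_{j'}$ and $\tilde{p}_\theta = 0$ for $\theta \in \mathcal{D}_j \setminus \mathcal{D}_{j'}$. The simplex constraint transports directly because $\sum_{\theta \in \mathcal{D}_j} \tilde{p}_\theta = \sum_{\theta \in \mathcal{D}_{j'}} p_\theta = 1$, and the inequality constraint does too because the zero-padded components contribute nothing to the conic combination: $\sum_{\theta \in \mathcal{D}_j} \tilde{\theta} \cdot \tilde{p}_\theta = \sum_{\theta \in \mathcal{D}_{j'}} \tilde{\theta} \cdot p_\theta$, so the inequality $G(z) \geq \sum_{\theta \in \mathcal{D}_{j'}} \tilde{\theta} \cdot p_\theta + (\upsilon/L){\vec 1}_N$ immediately yields $G(z) \geq \sum_{\theta \in \mathcal{D}_j} \tilde{\theta} \cdot \tilde{p}_\theta + (\upsilon/L){\vec 1}_N$.

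Therefore $(z, \upsilon, \tilde{p})$ is feasible for $\mathscr G(\mathcal{D}_j)$ with the same objective value $\upsilon$, which shows $\textsf{val}(\mathscr G(\mathcal{D}_{j'})) \leq \textsf{val}(\mathscr G(\mathcal{D}_{j}))$ by taking the supremum over feasible $(z, \upsilon, p)$ on the left. There is essentially no obstacle here: the only thing to be careful about is that both problems use the \emph{same} translated points $\tilde{\theta}$ (which they do, since $\tilde{\theta}_j$ is defined intrinsically from $\theta_j$ and $\hat{v}(\theta_j)$, not from the size of the dataset), and that both use the same Lipschitz constant $L$, so the geometric object $\tilde{\theta} \cdot p_\theta$ has the same meaning in both problems. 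The argument is essentially a standard ``larger feasible set yields larger optimal value'' observation applied to the embedding of $\mathscr G(\mathcal{D}_{j'})$ into $\mathscr G(\mathcal{D}_j)$ by zero-extension of the probability vector.
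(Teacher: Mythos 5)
Your proof is correct and is exactly the argument the paper gives (the paper treats this as an immediate observation, justified by the same zero-extension of the weight vector $p$ that you spell out). Nothing is missing; the only detail worth stating, which you did, is that the constraint data $\tilde\theta$ and $L$ are the same in both problems, so the padded solution is feasible with unchanged objective value.
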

We want to find the largest value of $\upsilon$ such that $G(z) \in \mathcal{A}(\psi_{{\cal U}}, \upsilon)$ for some $z \in {\cal Z}$.
However, $\mathscr G(\mathcal{D}_j)$ depends on $j$ so we need to identify the correct $j$ to determine the upper level set.
When the optimal value satisfies $\hat{v}(\theta_{j+1}) < \upsilon \leq \hat{v}(\theta_{j})$, then we have identified the correct index $j$ for $\mathscr G(\mathcal{D}_j)$.

\begin{proposition}\label{prop:binary}
Choose level $\upsilon \in (-\infty, \upsilon_{\max}]$ and $j = \kappa(\upsilon)$. Then $\max_{z\in{\cal Z}} \psi_{{\cal U}}(G(z))\geq \upsilon$ if and only if $\textsf{val} (\mathscr G(\mathcal{D}_j))\geq \upsilon$.
\end{proposition}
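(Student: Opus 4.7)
The plan is to view $\mathscr G(\mathcal D_j)$ as a direct polyhedral re-encoding of the statement ``$G(z) \in \mathcal A(\psi_{{\cal U}}, \upsilon)$'', using the explicit characterization of $\mathcal A(\psi_{{\cal U}}, \upsilon)$ from Theorem~\ref{thm:upper_level} together with the tautology $\psi_{{\cal U}}(x) \geq \upsilon \iff x \in \mathcal A(\psi_{{\cal U}}, \upsilon)$. The hypothesis $j = \kappa(\upsilon)$ guarantees that the index set appearing in the description of $\mathcal A(\psi_{{\cal U}}, \upsilon)$ is exactly $\mathcal D_j$, so the two formulations live on the same polyhedron and no mismatch of indices arises.

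For the forward implication, I would pick $z^\ast \in {\cal Z}$ with $\psi_{{\cal U}}(G(z^\ast)) \geq \upsilon$ (attainment follows because $\psi_{{\cal U}}$ is upper semi-continuous on ${\cal X}$, $G$ is continuous, and ${\cal Z}$ is compact; alternatively, work with a near-optimal $z^\ast$). Since $G(z^\ast) \in \mathcal A(\psi_{{\cal U}}, \upsilon)$, Theorem~\ref{thm:upper_level} applied with $\kappa(\upsilon) = j$ furnishes weights $p \in \mathbb R^{j}_{\geq 0}$ with $\sum_\theta p_\theta = 1$ such that $G(z^\ast) \geq \sum_{\theta \in \mathcal D_j} \tilde{\theta}\, p_\theta + (\upsilon/L){\vec 1}_{N}$. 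The triple $(z^\ast, \upsilon, p)$ is then feasible in $\mathscr G(\mathcal D_j)$ with objective value $\upsilon$, so $\textsf{val}(\mathscr G(\mathcal D_j)) \geq \upsilon$.

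For the reverse implication, I take any feasible $(z^\ast, \upsilon^\ast, p^\ast)$ in $\mathscr G(\mathcal D_j)$ with $\upsilon^\ast \geq \upsilon$. Feasibility gives $G(z^\ast) \geq \sum_{\theta \in \mathcal D_j} \tilde{\theta}\, p^\ast_\theta + (\upsilon^\ast/L){\vec 1}_{N}$, and because $\upsilon^\ast \geq \upsilon$ and ${\vec 1}_{N} \geq 0$, the same $(z^\ast, p^\ast)$ still satisfies the weaker inequality with $\upsilon$ on the right-hand side. Invoking Theorem~\ref{thm:upper_level} at level $\upsilon$, with the upper level set described by $\mathcal D_{\kappa(\upsilon)} = \mathcal D_j$, certifies that $G(z^\ast) \in \mathcal A(\psi_{{\cal U}}, \upsilon)$, whence $\psi_{{\cal U}}(G(z^\ast)) \geq \upsilon$ and so $\max_{z \in {\cal Z}} \psi_{{\cal U}}(G(z)) \geq \upsilon$.

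The only real subtlety, and the step I expect to be the main obstacle conceptually, is the index bookkeeping in the backward direction: $\upsilon^\ast$ could strictly exceed $\hat v(\theta_j)$ and thus belong to a different $\kappa$-bracket than $\upsilon$, which might seem to invalidate the use of $\mathcal D_j$. The resolution is that our target conclusion is phrased at level $\upsilon$, whose own upper level set is described precisely by $\mathcal D_{\kappa(\upsilon)} = \mathcal D_j$, and feasibility at level $\upsilon^\ast$ immediately relaxes to feasibility at level $\upsilon$ by the monotonicity of the constraint in the additive term $(\cdot/L){\vec 1}_{N}$. This monotonicity is precisely what makes the binary search over $\upsilon$ well-defined and aligns with Observation~\ref{obs:monotone}.
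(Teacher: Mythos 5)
Your proposal is correct and follows essentially the same route as the paper: both reduce the statement to the polyhedral description of $\mathcal{A}(\psi_{{\cal U}},\upsilon)$ from Theorem~\ref{thm:upper_level} with $j=\kappa(\upsilon)$, the paper phrasing it as a chain of equivalences through the level-$\upsilon'$ feasibility problems underlying $\mathscr G(\mathcal{D}_j)$, and you writing out the two implications directly (constructing a feasible triple at value $\upsilon$, and relaxing a feasible triple at value $\upsilon^\ast\geq\upsilon$ down to level $\upsilon$ via monotonicity of the $(\upsilon/L)\vec 1_N$ term). Your explicit handling of the index bookkeeping for $\upsilon^\ast$ in a different $\kappa$-bracket is a point the paper leaves implicit, but it does not change the substance of the argument.
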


Observation~\ref{obs:monotone} and Proposition~\ref{prop:binary} justify using a binary search method to solve $\mathscr{P}({\cal U})$.
The following algorithm searches over the level $v$ between $\textsf{val}(\mathscr G(\mathcal{D}_{J}))$ and $\textsf{val}(\mathscr G(\mathcal{D}_{1}))$.
We give the details of 
%our
the binary search procedure in Algorithm~\ref{algo:binary}.
%which solves $\mathscr{P}({\cal U})$.
Recall $\lfloor \cdot \rfloor$ is the floor function which returns the largest integer smaller than or equal to its argument.

\begin{algorithm}
\SetAlgoLined
%\KwResult{Returns an optimal solution of Problem~\eqref{prob:robust_main}}
 Initialization: sorted data sample $\mathcal{D}_{J} = \{(\theta_j, \hat{v}(\theta_j))\}_{j \in [J]}$, $j_1 = J$, $j_2 = 1$;
 
 \While{$j_1\ne j_2$}{
  Set $j: = \lfloor \frac{j_1+j_2}{2} \rfloor$, and compute $\upsilon_{j} = \textsf{val}(\mathscr G(\mathcal{D}_{j}))$ with optimal solution $z^*$
  ;
  
  \lIf{$\upsilon_{j} \leq \hat{v}(\theta_{j+1})$}{
   set $j_2:=j+1$}
   \lElse{
   set $j_1:=j$}
 }
 Set $j: = \lfloor \frac{j_1+j_2}{2} \rfloor$, and compute $\upsilon_{j} = \textsf{val}(\mathscr G(\mathcal{D}_{j}))$ with optimal solution $z^*$;
 
 \Return $z^*$ and $\psi_{{\cal U}}(G(z^*)) = \min\{\upsilon_{j}, \hat{v}(\theta_j)\}$.

 \caption{Binary search for $\mathscr{P}({\cal U})$}\label{algo:binary}
\end{algorithm}

Algorithm~\ref{algo:binary} searches over the levels $\{\hat{v}(\theta_1),\ldots,\hat{v}(\theta_J)\}$ of $\psi_{{\cal U}}$, where its upper level sets change form at the transition between each pair of distinct values in this set. Alternatively, we can view Algorithm~\ref{algo:binary} as a search over the space of indexes, where at each level we solve an instance of the convex optimization problem $\mathscr{G}(\mathcal{D}_j)$.
The complexity of solving $\mathscr{G}(\mathcal{D}_j)$ depends on the specifics of $G$. In general, $\mathscr{G}(\mathcal{D}_j)$ has $T+j+1$ decision variables, $N$ nonlinear inequality constraints, and one linear equality constraint. The number of constraints in $\mathscr{G}(\mathcal{D}_j)$ does not change as $j$ increases and uses more data, and the additional decision variables enter linearly, so $\mathscr{G}(\mathcal{D}_j)$ are all of comparable complexity for different values of $j$.
%{\color{red}HX: remind readers the complexity of computing $\upsilon_{j} = \textsf{val}(\mathscr G(\mathcal{D}_{j}))$.}

\begin{theorem}\label{thm:binary}
Algorithm~\ref{algo:binary} returns an optimal solution $z^*$ of $\mathscr{P}({\cal U})$, after solving at most $\log J$ instances of Problem~\eqref{prob:robust-G-D-j}.
\end{theorem}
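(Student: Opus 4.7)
The plan is to recast Algorithm~\ref{algo:binary} as a binary search for the transition point of a monotone predicate on $[J]$, and show that this transition delivers the optimal value $\upsilon^* := \max_{z \in \mathcal{Z}} \psi_{\mathcal{U}}(G(z))$. Write $\upsilon_j := \textsf{val}(\mathscr G(\mathcal{D}_j))$ and define $P(j) := [\upsilon_j > \hat{v}(\theta_{j+1})]$. Observation~\ref{obs:monotone} makes $j \mapsto \upsilon_j$ non-decreasing, while the sorted convention makes $j \mapsto \hat{v}(\theta_{j+1})$ non-increasing, so once $P$ becomes true it stays true. Set $j^* := \min\{j \in [J] : P(j) = 1\}$; this is well-defined because $\upsilon_J \geq \upsilon^* > -\infty = \hat{v}(\theta_{J+1})$ forces $P(J) = 1$.

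The heart of the proof is the identification $\upsilon^* = \min\{\upsilon_{j^*}, \hat{v}(\theta_{j^*})\}$. Let $\bar j := \kappa(\upsilon^*)$. Proposition~\ref{prop:binary} at $\upsilon = \upsilon^*$ gives $\upsilon_{\bar j} \geq \upsilon^* > \hat{v}(\theta_{\bar j+1})$, so $P(\bar j) = 1$ and hence $j^* \leq \bar j$; consequently $\upsilon^* \leq \hat{v}(\theta_{\bar j}) \leq \hat{v}(\theta_{j^*})$. For the companion bound $\upsilon^* \leq \upsilon_{j^*}$, I would argue by contradiction: assuming $\upsilon^* > \upsilon_{j^*}$, the non-increasing monotonicity of $\kappa$ in its argument gives $\bar j = \kappa(\upsilon^*) \leq \kappa(\upsilon_{j^*}) \leq j^*$ (the last step uses $P(j^*) = 1$, which forces $\kappa(\upsilon_{j^*}) \leq j^*$); combined with $j^* \leq \bar j$ this forces $\bar j = j^*$, so $\upsilon_{\bar j} = \upsilon_{j^*} < \upsilon^*$, contradicting $\upsilon_{\bar j} \geq \upsilon^*$. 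For the reverse inclusion, the optimizer $(z^*, p^*)$ of $\mathscr G(\mathcal{D}_{j^*})$ satisfies $G(z^*) \geq \sum_{\theta \in \mathcal{D}_{j^*}} \tilde{\theta}\, p^*_\theta + (\upsilon_{j^*}/L){\vec 1}_{N}$; since $\bar\upsilon := \min\{\upsilon_{j^*}, \hat{v}(\theta_{j^*})\} \leq \upsilon_{j^*}$ and $\kappa(\bar\upsilon) \geq j^*$, extending $p^*$ by zeros over $\mathcal{D}_{\kappa(\bar\upsilon)} \setminus \mathcal{D}_{j^*}$ and invoking Theorem~\ref{thm:upper_level} places $G(z^*) \in \mathcal{A}(\psi_{\mathcal{U}}, \bar\upsilon)$, so $\psi_{\mathcal{U}}(G(z^*)) \geq \bar\upsilon = \upsilon^*$.

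For the complexity, Algorithm~\ref{algo:binary} maintains the loop invariant $j_2 \leq j^* \leq j_1$: the update $j_2 := j+1$ on $P(j) = 0$ uses $j < j^*$, and $j_1 := j$ on $P(j) = 1$ uses $j \geq j^*$. The interval halves each iteration, so the loop terminates with $j_1 = j_2 = j^*$ after at most $\lceil \log_2 J \rceil$ iterations, each solving one instance of $\mathscr G(\mathcal{D}_j)$; with the final solve added, the total is $O(\log J)$ convex programs. The main obstacle I anticipate is the bound $\upsilon^* \leq \upsilon_{j^*}$: it does \emph{not} follow from Observation~\ref{obs:monotone} alone (that only gives $\upsilon_{j^*} \leq \upsilon_{\bar j}$, the wrong direction), and resolving it requires coupling the non-increasing behavior of $\kappa$ with Proposition~\ref{prop:binary}. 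Additional care is needed around ties in the sorted values $\hat{v}(\theta_j)$, which the algorithm tolerates because its branching depends only on the comparison between $\upsilon_j$ and $\hat{v}(\theta_{j+1})$.
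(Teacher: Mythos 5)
Your proof is correct, and although it runs on the same ingredients as the paper --- the monotonicity of $j \mapsto \textsf{val}(\mathscr{G}(\mathcal{D}_j))$ from Observation~\ref{obs:monotone}, the level test of Proposition~\ref{prop:binary}, and the level-set characterization of Theorem~\ref{thm:upper_level} --- it pivots on a different index, and that difference is substantive. The paper sets $\zeta = \kappa(\nu)$ for the optimal value $\nu$, argues that the branching test is false for all $j \le \zeta-1$ and true for all $j \ge \zeta+1$ so that the search lands on $\kappa(\nu)$, and then obtains optimality of the returned $z^*$ by extending a hypothetically better solution from $\mathscr{G}(\mathcal{D}_{\kappa(\nu)})$ into $\mathscr{G}(\mathcal{D}_{\zeta})$ for a contradiction. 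You instead let the search converge to the transition index $j^*$ of the monotone predicate $\upsilon_j > \hat{v}(\theta_{j+1})$ and prove the value identity $\min\{\upsilon_{j^*}, \hat{v}(\theta_{j^*})\} = \max_{z \in {\cal Z}}\psi_{{\cal U}}(G(z))$ directly at $j^*$: the delicate bound $\upsilon^* \le \upsilon_{j^*}$ comes from monotonicity of $\kappa$ combined with $P(j^*)=1$ rather than from solution extension, and attainment by the computed $z^*$ comes from inserting its certificate $(z^*,p^*)$ into Theorem~\ref{thm:upper_level} at the level $\bar{\upsilon}$. This matters exactly in the tie case you flagged: if several top data points share the optimal value (e.g., one-dimensional data with $\hat{v}(\theta_1)=\hat{v}(\theta_2)=\nu$ and a generous feasible region, so that $\upsilon_1 > \hat{v}(\theta_2)$), the predicate is already true at indices below $\kappa(\nu)$ and the algorithm stops at some $j^* < \kappa(\nu)$; the paper's intermediate claim that binary search identifies $\kappa(\nu)$ is then not literally true, although the returned pair is still optimal --- precisely because the value identity holds at the transition index, which is what your argument establishes. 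So the paper's proof is shorter and coincides with yours when the $\hat{v}$-values are distinct, while your decomposition is the more robust explanation of why Algorithm~\ref{algo:binary} is correct, and your loop-invariant bookkeeping yields the same $O(\log J)$ count of convex programs.
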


The computational complexity of $\log\,J$ in Theorem \ref{thm:binary} follows from the standard complexity of binary search (since we have at most $J$ unique levels to check). In line with this theorem, the running time of Algorithm \ref{algo:binary} is not really sensitive to $J$. It is more sensitive to the problem dimension $N$ and the complexity/nonlinearity of the stochastic function $G$, since these features determine how hard it is to solve each instance of Problem~\eqref{prob:robust-G-D-j}.
While level function methods and subgradient descent methods can be applied to $\mathscr{P}({\cal U})$, they do not enjoy the same logarithmic complexity as Algorithm \ref{algo:binary} (their convergence guarantees are usually asymptotic). In addition, evaluating $\psi_{{\cal U}}(x)$ for even a single $x$ is numerically challenging. Algorithm \ref{algo:binary} overcomes this issue by directly using the upper level sets of $\psi_{{\cal U}}$. A level set/subgradient descent method would also have to overcome this issue to obtain function values and upper subgradients.
In addition, binary search solves the quasiconcave maximization problem
precisely, whereas the level function and subgradient descent methods generally solve it only approximately.
The fact that we can use binary search at all is due to the specific structure of the worst-case quasiconcave function which lets us explicitly construct its upper level sets.

\section{Numerical Experiments}\label{sec:numerical}

{\color{black}
We apply our method to two resource allocation problems and report numerical experiments in this section. The first is a production efficiency problem with a quasiconcave Cobb-Douglas function. The second is a fair resource allocation problem where the individuals in a heterogeneous population have quasiconcave utility functions.

\subsection{Cobb-Douglas Production}
}

We suppose there are $N \geq 1$ inputs, where $x_n \geq 0$ is the amount we use of input $n \in [N]$. We have $I \geq 1$ resource constraints which determine the feasible region:
$
{\cal X} \triangleq \left\{ x \in \mathbb{R}^N_{\geq 0} : \sum_{n=1}^N a_{in} x_n \leq b_i, i \in [I], x_n \in [x_{\min}, x_{\max}], n \in [N] \right\},
$
where all $a_{in}, b_i > 0$ and $0 < x_{\min} \leq x_{\max} < \infty$ (so ${\cal X}$ is automatically nonempty, convex, and compact).
We recall the Cobb-Douglas production efficiency problem introduced in \cite{bradley1974fractional} given by:
%
% {\color{red}HX: more interesting to me}
\begin{equation}\label{eq:profit-cost-ratio}
\max_{x \in {\cal X}} f(x) \triangleq \frac{\alpha_0 \prod_{n=1}^N  x_n^{\alpha_n}}{\sum_{n=1}^N c_n x_n + c_0},
\end{equation}
% {\color{red}HX:
% The function is quasiconcave but not necessarily monotonically increasing 
% as Jian observed. I did not realize this.
% }
%
% {\color{red}HX: the function is not Lipschitz continuous over ${\cal X}$} 
where $\sum_{n=1}^N \alpha_n = 1$, which maximizes the ratio of production value (i.e., $\alpha_0 \prod_{n=1}^N x_n^{\alpha_n}$) to production cost (i.e., $\sum_{n=1}^N c_n x_n + c_0$). 
% {\color{black}
% Note that $f$ is quasiconcave but not necessarily monotonically increasing. 
% %as Jian observed. I did not realize this.
% }
We assume that
%{\color{red}HX: notation 
$x_{\min} >0$ 
%is vague, compomentwise? WBH: This is specified above at the top of this page, where $x_{\min}$ is a scalar lower bound for each component.}
so that the objective function is Lipschitz continuous on ${\cal X}$. This problem is maximizing a quasiconcave function over a convex set. {\color{black} Note that the function $f$ is quasiconcave but not necessarily monotone.
%ically increasing along rays.
We adapt our method to handle non-monotone objectives with only slight modification as described in Appendix~\ref{sec:nonmonotone}.}
% We simply drop the monotonicity constraint in our RO formulation and all the results would follow without further modification. 

{\color{black}The Cobb--Douglas production function is a classical and widely used model for the relationship between inputs (e.g., labor, capital, materials) and outputs in various industries, from manufacturing to agriculture to services.
It has been extensively applied in production economics and efficiency analysis.
Its practical relevance stems from its ability to capture diminishing returns to scale and its tractability in both theoretical analysis and empirical estimation.}

Suppose that the decision maker does not have complete knowledge about the true objective function in Eq.~\eqref{eq:profit-cost-ratio} (e.g., the value/cost parameters, or even the specific form of the production function). Nevertheless, the decision maker has several observations of the value/cost ratio from historical data of the total production quantity and the total cost for companies in this sector. Through these observations, the decision maker can form lower bounds on the value of the true objective function, i.e., $f(x) \geq \hat v(\theta)$ for $\theta \in \Theta$. In addition, the decision maker knows the objective function is quasiconcave and Lipschitz continuous. The decision maker would then like to generate a robust solution to Problem~\eqref{eq:profit-cost-ratio} based on available partial information.

For the remainder of this section, unless otherwise specified, we take $N=2$, $(\alpha_0, \alpha_1, \alpha_2) = (1.0, 0.6, 0.4)$, $(c_0, c_1, c_2)=(1.0, 1.0, 2.0)$, and $x_{\min} = 0.5$, $x_{\max} = 10$. For this specific choice of parameters, the function has a Lipschitz constant of $0.20$. However, we may not know the exact value of the Lipschitz constant since the true objective function is unknown. {\color{black} Theoretically, increasing $L$ weakens the smoothness restriction and enlarges the ambiguity set ${\cal U}$, which can make the worst-case objective more conservative. In contrast, decreasing $L$ imposes stronger regularity and typically reduces conservatism. In the experiments, we choose $L=0.30$ to be the conservative value to construct the worst-case objective. }
% {\color{red}HX: the components of $\alpha$ do not add up to 1} \jw{JW: the index starts with $0$.}

\subsubsection{Upper Level Sets}
In this subsection, we compare the upper level sets of the true objective function $f$ and its robust counterpart $\psi_{\mathcal{U}}$. In Figure~\ref{fig:sub1}, we plot the surface of the true function $f$. Next, we randomly sample $J=200$ points uniformly from $[0.5, 10] \times [0.5,10]$ and obtain their corresponding true function values. These values are used as lower bounds to generate a worst-case function $\psi_{\mathcal{U}}$. In Figure~\ref{fig:sub2}, we compare the upper level sets of the true function and $\psi_{\mathcal{U}}$. First, we observe that the upper level sets are all convex, verifying the quasiconcavity of both functions. {\color{black} The jagged edges observed in the plot are discretization artifacts arising from the finite mesh.} Second, we note that the upper level sets of $f$ are always contained in those of $\psi_{\mathcal{U}}$, which we would expect by definition of $\psi_{\mathcal{U}}$.

\begin{figure}[htbp]
  \centering

  % --- first row ---
  \begin{subfigure}[t]{0.48\textwidth}
    \centering
    \includegraphics[width=0.7\linewidth]{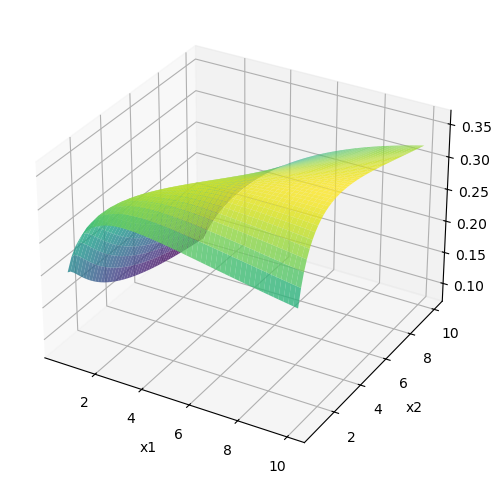}  % replace with your file
    \caption{True Function Surface}                           % <- subtitle
    \label{fig:sub1}
  \end{subfigure}
  \hfill
  \begin{subfigure}[t]{0.48\textwidth}
    \centering
    \includegraphics[width=0.7\linewidth]{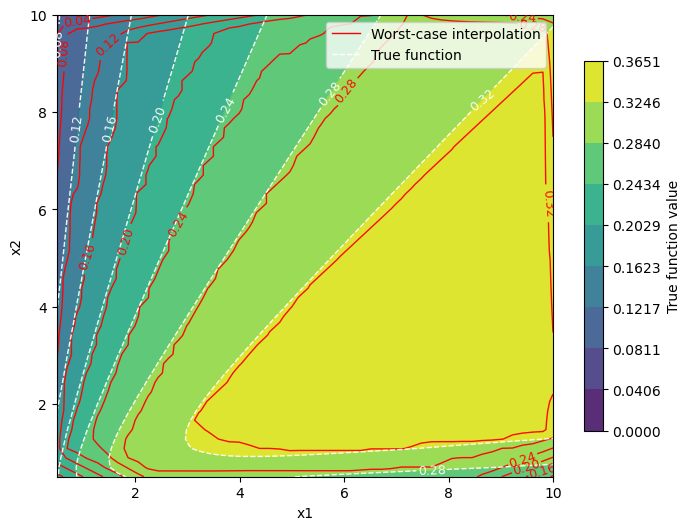}
    \caption{Worst-Case Function Contour Lines}
    \label{fig:sub2}
  \end{subfigure}

  \caption{Comparison of Upper Level Sets}
  \label{fig:upper-level-set}
\end{figure}

% {\color{red} HX: Looking at the black curves in Figure 1 (b), the boundary of the upper level set approximation does not have a piecewise linear structure. Indeed, the blue curves are not even convex. Has this to do with approximation errors?} \jw{JW: Yes, to make this plot, we have to compute the worst case function value for a finite mesh. The zig-zags are just because the grid is not fine enough due to the computational limit.
% {\color{black}
% HX: grid should not be the cause of zigzag behaviour if you use the quasiconcave overlap as outlined in the theoretical analysis, rather it might be ``piecewise constant approximation'' that results in the non-convexity, right?  
% }
% }
% \jw{I have increased the grid size to make it more smooth}

\subsubsection{Effectiveness of Robust Framework}
In this subsection, we demonstrate the effectiveness of our robust approach for an ambiguous objective function. First, we compare the quality of the output of our function approximation method against two benchmarks: (i) concave regression; and (ii) piecewise constant approximation where we interpolate between sampled points with constant functions. To compute (i), we first create linear function fits for clustered sampled points and then take the minimum of these linear functions. For (ii), we note that the upper level sets of the piecewise constant approximation are just the convex hulls of all sampled points with greater function values (and so the piecewise constant approximations are quasiconcave). However, concave regression necessarily gives a concave function which does not accurately reflect quasiconcavity of the target function in this case. 

In Figure~\ref{fig:comparion-diff-methods}, we plot the functions produced by the three methods using $J=200$ sample points generated uniformly from $[0.5, 10] \times [0.5,10]$, represented by the red dots. We observe that due to misspecification of concave regression for the Cobb-Douglas function, the resulting estimated function generally deviates from the sample points and does not well approximate the target function. We also observe that, compared to the piecewise constant approximation, 
% {\color{red}HX: how do you construct piecewise constant function given that the grid points are not regularly spread?} \jw{JW: I added further explanation in the previous paragraph. Basically, the piecewise constant approximation is just the extreme case of our function taking $L\rightarrow \infty$.}
our worst-case quasiconcave function is smooth due to the Lipschitz constraint. We see that the piecewise constant approximation is too conservative especially near the boundaries where the sample points are sparse. In summary, our worst-case quasiconcave function gives the most consistent approximation of the true Cobb-Douglas production function. 

\begin{figure}[htbp]
  \centering
  % --- first row ---
  \begin{subfigure}[t]{0.3\textwidth}
    \centering
    \includegraphics[width=1\linewidth]{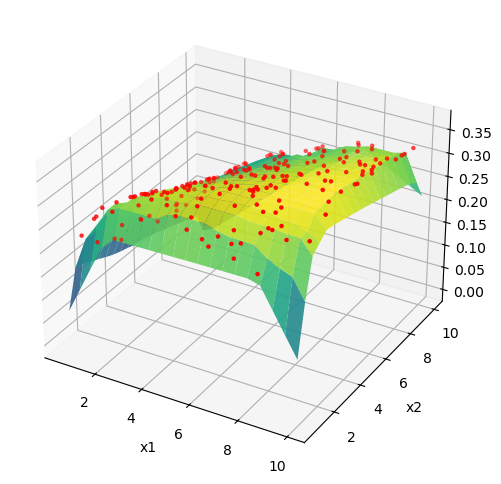}  % replace with your file
    \caption{Quasiconcave}                           % <- subtitle
    \label{fig:compare-sub1}
  \end{subfigure}
  \hfill
  \begin{subfigure}[t]{0.3\textwidth}
    \centering
    \includegraphics[width=1\linewidth]{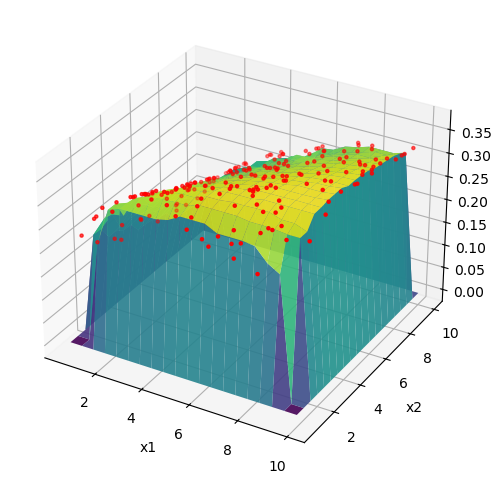}
    \caption{Piecewise Constant}
    \label{fig:compare-sub2}
  \end{subfigure}
  \hfill
  \begin{subfigure}[t]{0.3\textwidth}
    \centering
    \includegraphics[width=1\linewidth]{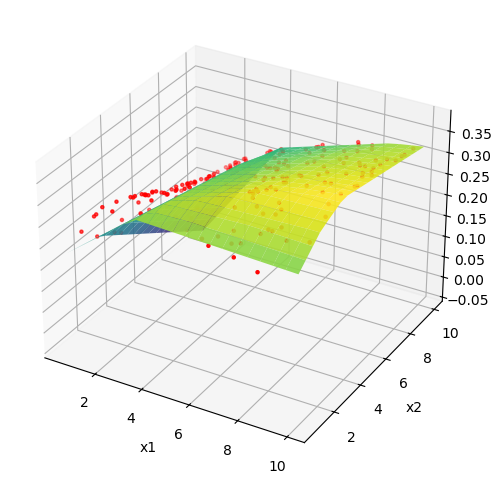}
    \caption{Concave}
    \label{fig:compare-sub3}
  \end{subfigure}

  \caption{Comparison of Function Approximations}
  \label{fig:comparion-diff-methods}
\end{figure}

To quantify the quality of the function approximation by different methods, we compare the $L_1$ norm between the approximated function and the true one for increasing sample sizes. We do not use the $L_\infty$ norm here because the $L_\infty$ norm of the piecewise constant approximation is constant even for large sample sizes, as seen in Figure~\ref{fig:compare-sub2}. In Figure~\ref{fig:L1}, we see that our worst-case quasiconcave function quickly converges to the target function as the sample size increases, while the piecewise constant function converges much more slowly. The concave regression, however, suffers from model misspecification and does not converge even with increasing sample size. {\color{black}Moreover, because concave regression assumes a concave target function while the true Cobb--Douglas function in our experiment is only quasiconcave, increasing the sample size can make the mismatch even more apparent. The best concave fit can shift as additional samples reveal more non-concave curvature, which can lead to non-monotone empirical $L_1-$errors for increasing $J$. This observation further highlights the importance of using the more flexible class of quasiconcave functions for regression when concavity is not justified.}  

\textcolor{black}{This behavior highlights the trade-off in choosing the sample size $J$. When $J$ is larger, the ambiguity set ${\cal U}$ is smaller because more majorization constraints $f(\theta)\geq \hat v(\theta)$ are imposed. This typically reduces the conservatism of the quasiconcave envelope, and leads to a less pessimistic worst-case objective. Conversely, many functions remain admissible when $J$ is small and the resulting envelope (and optimal solution) are more conservative. Subsequently, they are more robust to misspecification of the objective due to limited information.}

\begin{figure}
    \centering
    \includegraphics[width=0.55\linewidth]{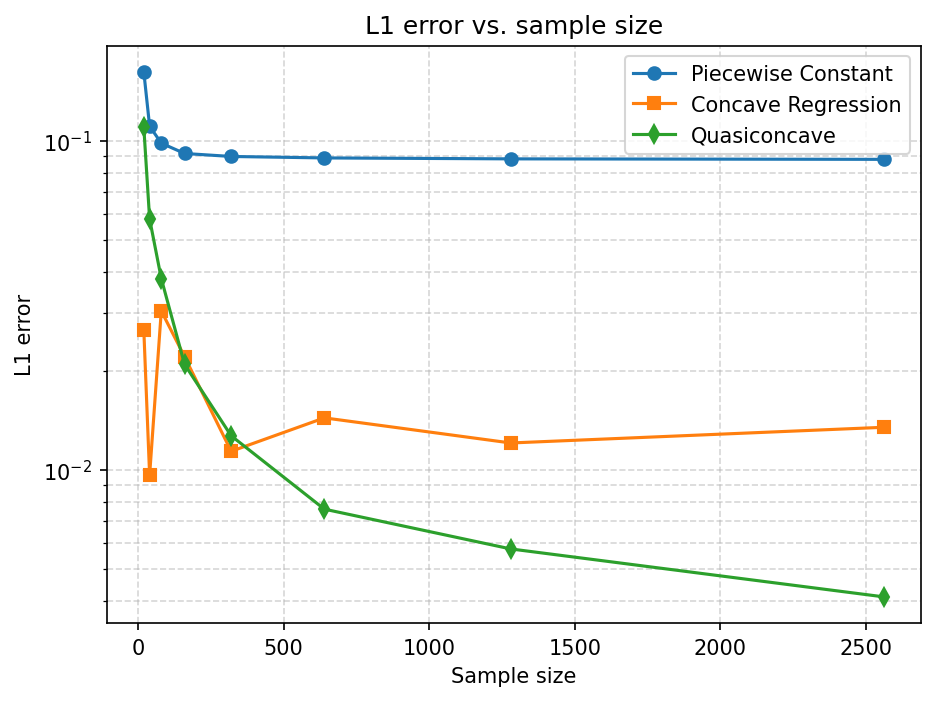}
    \caption{$L_1-$error of Different Function Approximation Methods}
    \label{fig:L1}
\end{figure}

Finally, we compare the performance of the solutions generated from these three methods. To obtain the theoretical optimal solution to Problem~\eqref{eq:profit-cost-ratio}, we reformulate it as a geometric programming problem which can be handled by standard solvers:
\begin{subequations}
\begin{align*}
\max_{x \in \mathbb{R}^N_{\geq 0}, t\geq 0} \quad & \alpha_0\prod_{n=1}^N  x_n^{\alpha_n}t^{-1}\\
{\rm s.t.} \quad & \sum_{n=1}^N a_{in} x_n \leq b_i, i \in [I],\\
& \sum_{n=1}^N c_n x_n + c_0 \leq t.
\end{align*}
\end{subequations}
% {\color{red}HX: again the function is not Lipschitz continuous.}
In Table~\ref{table:optimality-gap}, we report the average percentage optimality gap of the solutions (i.e., the difference between the production-cost ratio of the solution and the theoretical optimal solution, divided by the true optimal value). The results in Table~\ref{table:optimality-gap} are based on 200 replications of the experiment. We also report the standard deviation and the maximum of the percentage optimality gap over these replications. 

Compared with the other methods, our robust quasiconcave solution consistently reaches the lowest optimality gap over different sample sizes. Moreover, it converges much faster than the solution from the piecewise constant approximation. This is because, although the piecewise constant approximation is quasiconcave, it does not account for the additional smoothness information. 

\begin{table}[ht]
\begin{centering}
{\footnotesize{
\begin{tabular}{lcccccc}
\toprule
& \multicolumn{6}{c}{Number of Sampled Points $J$}\\
 \cline{2-7}
Method  & 32 & 64 & 128 & 256 & 512 & 1024\\
\hline
Concave & 8.6 (14.6, 63.7) & 5.2 (6.6, 16.9) & 1.1 (1.5, 5.3) & 0.3 (0.4, 1.7) & 0.7 (1.3, 4.6) & 0.2 (0.2, 0.9) \\
QCO Constant  & 2.5 (3.4, 14.9) & 1.0 (1.0, 3.9) & 0.5 (0.5, 1.7) & 0.4 (0.3, 1.0) & 0.1 (0.2, 0.8) & 0.1 (0.1, 0.5) \\
QCO Lipschitz & 1.0 (1.4, 5.2) & 0.8 (1.0, 3.1) & 0.3 (0.5, 1.4) & 0.2 (0.2, 0.7) & 0.1 (0.2, 0.5) & 0.1 (0.1, 0.5) \\
\bottomrule
\end{tabular}
}}
\par\end{centering}
\caption{Optimality gap (in \%) of the solutions generated by different methods. {\color{black}``Concave'' is concave regression; ``QCO Constant'' is a piecewise constant approximation; and ``QCO Lipschitz'' is our robust Lipschitz-quasiconcave solution.} The numbers in parenthesis are the standard deviation and maximum.}
\label{table:optimality-gap}
\end{table}
\subsubsection{Efficiency of Binary Search Algorithm}
This subsection demonstrates the efficiency of our proposed method with respect to increasing problem scale. First, we consider the computation of the worst-case quasiconcave function value $\psi_{\mathcal{U}}(x)$. Notice that Algorithm~\ref{algo:binary} can be directly adapted to calculating $\psi_{\mathcal{U}}(x)$ simply by treating $x \equiv G(z)$ where $\mathcal{Z} = \{z\}$ is a singleton. For the benchmark, we solve for the worst-case value $\psi_{\mathcal{U}}(x)$ using the non-convex problem $\mathscr{P}(x; \mathcal{D}_J)$. We solve $\mathscr{P}(x; \mathcal{D}_J)$ as an MILP using the Big-M method for the disjunctive constraints, denoted as $\mathscr{P}_{MILP}(x; \mathcal{D}_J)$: 
\begin{subequations}
\begin{align*}
\min_{\upsilon \in \mathbb{R}, \xi \in \mathbb{R}^N, s_1, s_2 \in \mathbb{R}^J, u \in \{0,1\}^J } \quad & \upsilon\\
{\rm s.t.} \quad & \upsilon + s_1(\theta) \geq \hat{v}(\theta),\, \forall \theta \in \mathcal{D}_J,\\
& \langle \xi, \theta - x \rangle \geq s_1(\theta) - s_2(\theta), \, \forall \theta \in \mathcal{D}_J,\\
& s_1(\theta) \leq u(\theta) \cdot M, s_2(\theta) \leq (1-u(\theta)) \cdot M, \, \forall \theta \in \mathcal{D}_J,\\
& \xi \geq 0,\, \|\xi\|_1 \leq L,
\end{align*}
\end{subequations}
where {\color{black} $M=10^3$ is the Big-$M$ constant used in our implementation (chosen to safely dominate the range of $s_1(\theta)$ and $s_2(\theta)$ implied by the bounds in our experiments). All MILPs were solved with HiGHS (the default MILP solver of CVXPY). The disjunctive constraints could alternatively be formulated as SOS1 constraints.}

We report the time required to compute $\psi_{\mathcal{U}}(x)$ for a randomly selected $x$ with Algorithm~\ref{algo:binary} and with $\mathscr{P}_{MILP}(x; \mathcal{D}_J)$ in Table~\ref{table:scalability-samples}.
From Table~\ref{table:scalability-samples}, we observe that Algorithm~\ref{algo:binary} scales well with the sample size, while the MILP reformulation is much slower for large sample sizes. 

\begin{table}[ht]
\begin{centering}

\begin{tabular}{ccccccccccccc}
\hline
 & &  \multicolumn{7}{c}{Number of Sampled Points $J$}\tabularnewline
 \cline{3-10}

 & Method & 32 & 64 & 128 & 256 & 512 & 1024 & 2048 & 4096 \\
\hline 

& Binary Search & 0.041 & 0.082 & 0.174 & 0.371 & 0.851 & 1.894 & {\color{black} 4.035} & {\color{black} 9.235} \\
& MILP & 0.074 & 0.137 & 0.285 & 0.655 & 1.810 & 6.823 & {\color{black} 34.079} & {\color{black} 204.739} \\

\hline 
\end{tabular}
\par\end{centering}
\caption{Running time of Algorithm~\ref{algo:binary} compared to the MILP reformulation with increasing sample size and fixed dimension $N=2$ (in seconds).}
\label{table:scalability-samples}
\end{table}

Next, we test the scalability of Algorithm~\ref{algo:binary} for the robust optimization problem $\mathscr{P}(\mathcal{U})$. {\color{black} The MILP benchmark above is useful for evaluating $\psi_{\mathcal{U}}(x)$ at a fixed $x$, but it is not directly suitable for the overall robust optimization problem because the outer maximization would require repeatedly solving MILPs.} For a benchmark, we use the level function method for quasiconvex optimization proposed by \cite{xu2001level}. This method uses the subgradient of the objective function to construct a level function in each iteration. In addition, it is guaranteed to stop at the global optimum. The procedure is summarized in Algorithm~\ref{algo:level_function}. For further details about this method and its theoretical guarantees, the readers are referred to \cite{xu2001level}.

\begin{algorithm}
\SetAlgoLined

 Initialization: $x_0 \in \mathcal{X}$, $i=0$, $\sigma_{-1}(x) = \infty$, and $\Delta(0) = \infty$;
 
 \While{$\Delta(i)> 0$}{
  Let $(v, \xi, s_1, s_2, u)$ be the optimal solution of $\mathscr{P}_{MILP}(x_i; \mathcal{D}_J)$;
  
  Set level function $\sigma_{x_i}(x) := \langle \xi, x - x_i\rangle$ and set $\sigma_i(x) := \min \{\sigma_{i-1}(x), \sigma_{x_i}(x)\}$;
  
  Let $x_{i+1} \in \arg \max_{x \in \mathcal{X}} \sigma_i(x)$ and let $\Delta(i+1) := \sigma_i(x_{i+1})$;
  
  Set $i\leftarrow i+1$;

 }
 
 \Return $x_i$.

 \caption{Level Function Method for $\mathscr{P}(\mathcal{U})$}\label{algo:level_function}
\end{algorithm}

In Table~\ref{table:scalability-samples-RO}, we report the running time of Algorithm~\ref{algo:binary} versus Algorithm~\ref{algo:level_function} for increasing problem size. We see that Algorithm~\ref{algo:binary} is very efficient compared to Algorithm~\ref{algo:level_function}, whose running time increases rapidly with the sample size. This is because Algorithm~\ref{algo:level_function} has to solve an MILP in each iteration to obtain the level function, while our method only solves $\log(J)$ LPs. In particular, the other first-order methods (e.g., \cite{hu2015inexact,yu2019abstract,grad2023relaxed,hazan2015beyond}) all need to solve an MILP in each iteration to obtain sub-differential information. Thus, they all face the same bottleneck to their scalability.

\begin{table}[ht]
\begin{centering}

\begin{tabular}{ccccccccccc}
\hline
 & &  \multicolumn{7}{c}{Number of Sampled Points $J$}\tabularnewline
 \cline{3-9}

 & Method &  16 &  32 &  64 &  128 & 256 & 512 & 1024
\tabularnewline
\hline 

& Binary Search & 0.043 & 0.063 & 0.083 & 0.141 & 0.268 & 0.461 & 0.940\\
& Level Function & 1.711 & 4.328 & 6.659 & 10.592 & 20.222 & 93.370 & 925.354\\

\hline 
\end{tabular}
\par\end{centering}
\caption{Running time of the binary search algorithm compared to the level function method with increasing sampled points and fixed dimension $N=2$ (in seconds).}
\label{table:scalability-samples-RO}
\end{table}

% {\color{black} 1-29 WBH: Here I propose other possible applications we may include to satisfy the main review team comment.

% Max-min utility. We are allocating resources $x \in {\cal X}$ among a heterogeneous population with utility functions $u_i(x)$, and we want to maximize the utility of the \textit{worst-off} individual. This objective is appropriate when we want fairness guarantees on the resulting solution. Then, we want to solve
% $$
% \max_{x \in {\cal X}} \min_{i \in {\cal I}} u_i(x),
% $$
% where ${\cal I}$ is the index of the population. When all $\{u_i\}$ are quasiconcave, then $\min_{i \in {\cal I}} u_i$ is quasiconcave and this problem fits our framework.

% For instance, the utility of customer $i$ at location $x_i$ may be a decreasing function of the distance $d(x, x_i)$ to a source $x$ (the decision variable). For instance, take the sigmoidal function
% $$
% u_i(x) = 1 - \frac{1}{1 + \exp(-\kappa(d(x, x_i) - d_0))},
% $$
% where $d_0$ is a critical distance threshold.
% }
% {\color{violet}This is a good idea. I will work on this. }

\subsection{Max-Min Utility Problem}\label{subsec:maxmin-utility}
{\color{black}In this subsection, we consider a max-min utility problem that arises in resource allocation with fairness considerations. We are allocating resources $x \in {\cal X} \subset \mathbb{R}^N$ in a convex, compact feasible set among a heterogeneous population. Each individual $i \in {\cal I} = \{1, 2, \ldots, I\}$ has a utility function $u_i : {\cal X} \rightarrow \mathbb{R}$ which depends on the allocation. The goal is to maximize the utility of the \textit{worst-off} individual, which provides fairness guarantees on the resulting solution. This leads to the optimization problem:
\begin{equation}\label{eq:maxmin-utility}
\max_{x \in {\cal X}} \min_{i \in {\cal I}} u_i(x).
\end{equation}
\noindent
When all utility functions $\{u_i\}_{i \in {\cal I}}$ are quasiconcave, the minimum $\min_{i \in {\cal I}} u_i$ is also quasiconcave, and this problem fits into our robust quasiconcave optimization framework. \cite{bertsimas2011price} study the price of fairness for resource allocation problems based on proportional and max-min fairness (such as Problem~\eqref{eq:maxmin-utility}).}

{\color{black}In practice, the true utility functions $\{u_i\}_{i \in {\cal I}}$ typically cannot be fully specified a priori. Instead, we have access to limited partial information about them. For instance, we may conduct surveys or preference elicitation experiments to measure utility for set points (representing allocations). These data points, while sparse, can help us construct a useful ambiguity set for the actual utility functions. Our framework can leverage this partial information to make decisions that are robust to the uncertainty about the true utility functions.}

{\color{black}In our experiments, Problem~\eqref{eq:maxmin-utility} optimizes over the location $x \in \mathbb{R}^2$ of a facility (e.g., a hospital, school, or emergency response center).
Each individual $i$ is located at a fixed point $p_i \in \mathbb{R}^2$, and the utility of individual $i$ is a decreasing function of the Euclidean distance $d(x, p_i) = \|x - p_i\|_2$ from the facility location $x$ to their location $p_i$. We take the sigmoidal utility function:
%\begin{equation}\label{eq:sigmoidal-utility}
$u_i(x) = 1 - (1 + \exp(-\kappa(d(x, p_i) - d_0)))^{-1}$,
%\end{equation}
where $\kappa > 0$ controls the steepness of the sigmoid and $d_0 > 0$ is a critical distance threshold. This utility function is quasiconcave in $x$ since it is a decreasing function of the convex distance function $d(x, p_i)$.}

{\color{black}Our numerical experiments use the following settings: the population size is $I = 10$; individual locations $p_i$ are drawn uniformly from the square $[2, 8] \times [2, 8]$; the sigmoid parameters are $\kappa = 1.5$ and $d_0 = 2.5$; the feasible region is ${\cal X} = \{x \in \mathbb{R}^2 : x_1, x_2 \in [0, 10],\, x_1+x_2\leq8\}$; and the Lipschitz constant is $L = 0.5$.
}

\subsubsection{Sensitivity to Sample Size}\label{subsubsec:maxmin-sensitivity}
{\color{black}Similar to the Cobb-Douglas experiment, we compare our robust quasiconcave approach against two benchmarks: (i) piecewise constant approximation, denoted `QCO Const'; and (ii) concave regression. We sample $J \in \{16, 32, 64, 128, 256, 512, 1024\}$ points uniformly from ${\cal X}$ and evaluate the $L_1-$error between the true objective and the corresponding approximations obtained from $100$ randomly sampled points from $\mathcal{X}$. Table~\ref{table:maxmin-L1} reports the average $L_1-$error for all three methods over 20 complete runs of the experiment. These numerical results show that our method (QCO Lipschitz) achieves lower $L_1-$error than piecewise constant approximation (QCO Const) and concave regression for every value of $J$. Furthermore, the $L_1-$error of our method consistently decreases as $J$ increases. In contrast, concave regression does not improve monotonically with $J$. Our approach enjoys better approximation accuracy and more stable improvement with respect to sample size $J$.}

\begin{table}[ht]
\begin{centering}
\begin{tabular}{lccccccc}
\toprule
& \multicolumn{7}{c}{Number of Sampled Points $J$}\\
 \cline{2-8}
Method & 16 & 32 & 64 & 128 & 256 & 512 & 1024\\
\hline
QCO Lipschitz & 0.0111 & 0.0115 & 0.0073 & 0.0079 & 0.0050 & 0.0039 & 0.0023 \\
QCO Const & 0.0138 & 0.0138 & 0.0090 & 0.0090 & 0.0061 & 0.0055 & 0.0032 \\
Concave Regression & 0.0147 & 0.0359 & 0.0411 & 0.0485 & 0.0248 & 0.0228 & 0.0249 \\
\bottomrule
\end{tabular}
\par\end{centering}
\caption{$L_1-$error between the approximate and true max-min objective functions for each method, versus number of sampled points $J$ (dimension $N=2$).
% {\color{red}24-Feb-2026 HX:  It might be interesting to compare the price of fairness (POF) (see \cite{bertsimas2011price} for the definition)
% of the three approaches.} \jw{JW: I have added this to Figure 5. }
}
\label{table:maxmin-L1}
\end{table}

\subsubsection{Fairness Analysis}\label{subsubsec:maxmin-fairness}
{\color{black} We now want to see how robust optimization affects fairness guarantees when the true utility functions are ambiguous.
We analyze the distribution of utilities across individuals for the true (perfect information) optimal solution, the robust solution from our method, and the solutions from the benchmark methods (piecewise constant approximation and concave regression). 
Figure~\ref{fig:Maxmin_Solution_Map} shows the facility locations chosen by each method together with the individual locations $p_i$ for one run of this experiment. The solution from our method (QCO Lipschitz) is the closest to the true optimal location, whereas concave regression, building on incorrect structural information, is way off. We repeat this experiment for $20$ runs and for each solution we compute: (i) the minimum utility (worst-off individual); (ii) the mean utility across all individuals; (iii) the standard deviation of the population's utilities (which is a measure of inequality); and (iv) the price of fairness (POF) \citep{bertsimas2011price}, defined as the relative reduction in the sum of utilities under the fair allocation, compared to the utilitarian solution (to maximize the total utility). The average metrics over the $20$ repetitions are reported in Figure~\ref{fig:Maxmin_Fairness_Comparison}. Compared to the benchmarks, our method consistently achieves the fairest allocation in terms of minimum, mean, and standard deviation of the population's utilities, with little compromise of the system efficiency, measured by POF. 
}
\begin{figure}[h]
    \centering
    \includegraphics[width=0.40\linewidth]{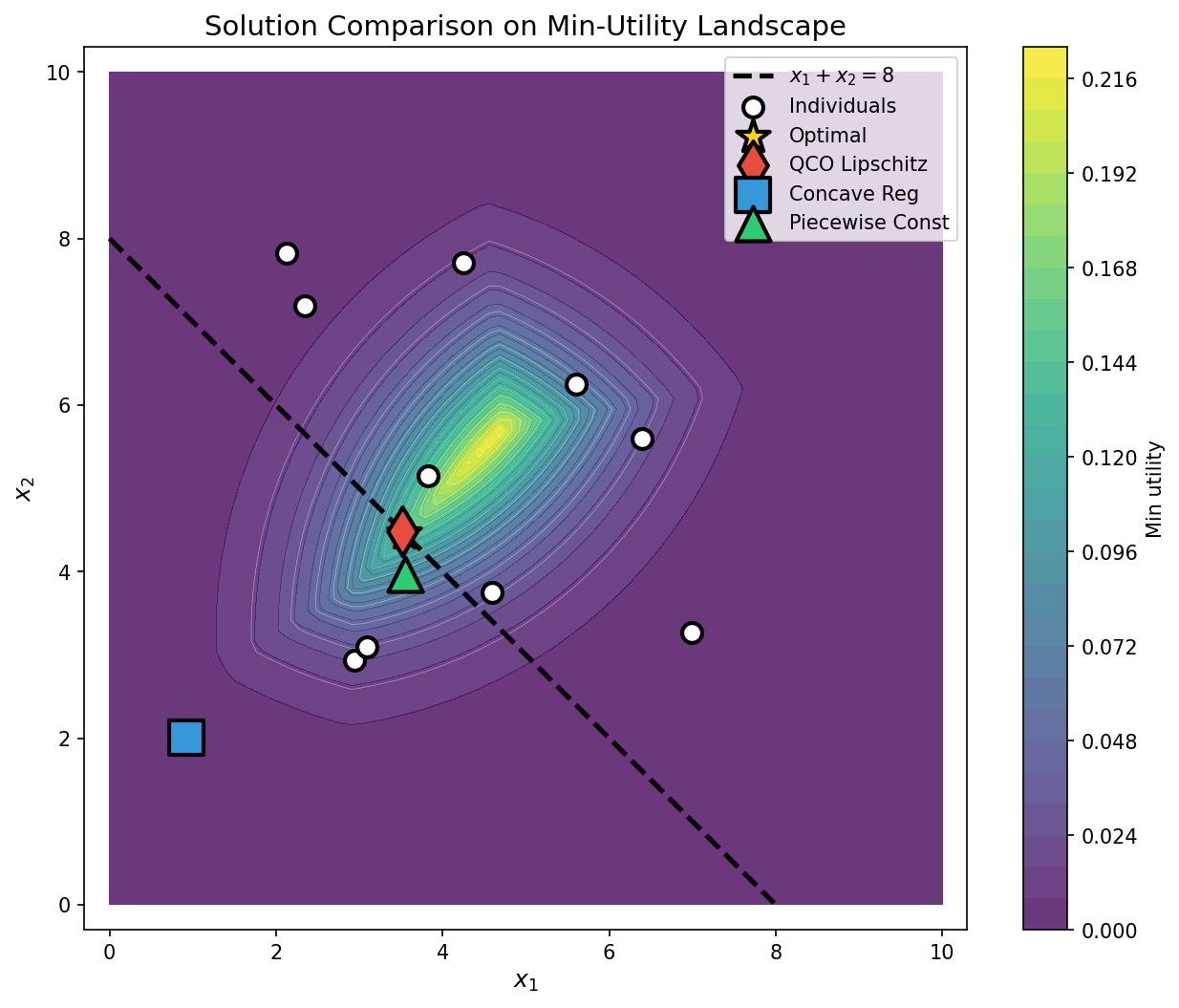}
    \caption{Locations of Different Solutions}
    \label{fig:Maxmin_Solution_Map}
\end{figure}

\begin{figure}[h]
    \centering
    \includegraphics[width=0.8\linewidth]{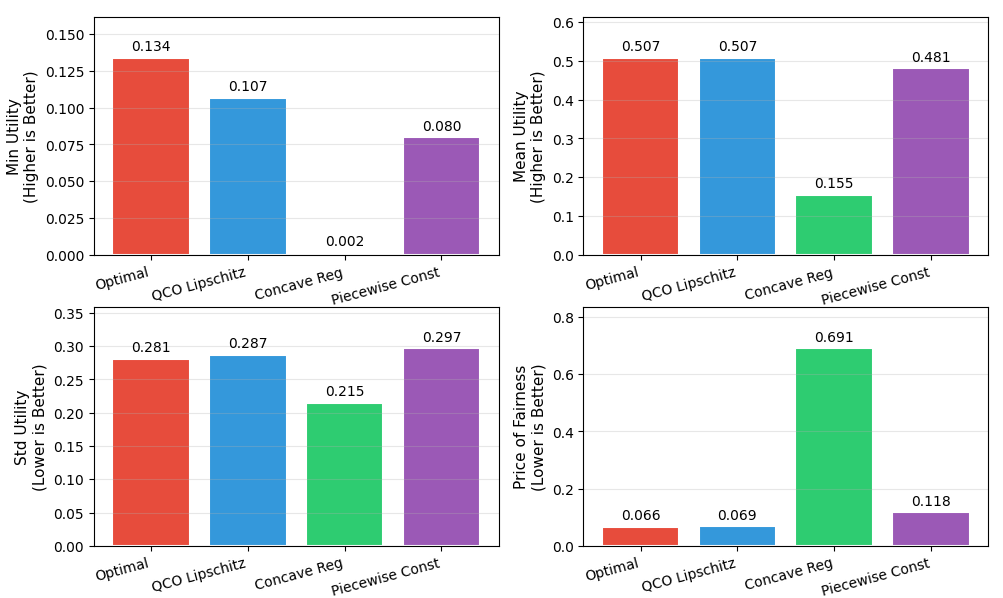}
    \caption{Utility Comparison Across Individuals}
    \label{fig:Maxmin_Fairness_Comparison}
\end{figure}

\subsubsection{Computational Efficiency}\label{subsubsec:maxmin-efficiency}
{\color{black}We compare the running time of our binary search algorithm (Algorithm~\ref{algo:binary}) against the level function method (Algorithm~\ref{algo:level_function}) for solving the robust max-min utility problem as the number of sampled points $J$ increases, see Table~\ref{table:scalability-samples-maxmin}. Binary search is consistently faster and the gap widens as $J$ increases. For $J=1024$ sampled points, binary search completes in $0.46$ seconds while the level function method requires $30.5$ seconds. These experiments show that our algorithm is computationally better suited for large-scale problems.}

\begin{table}[ht]
\begin{centering}

\begin{tabular}{ccccccccccc}
\hline
 & &  \multicolumn{7}{c}{Number of Sampled Points $J$}\tabularnewline
 \cline{3-9}

 & Method &  16 &  32 &  64 &  128 & 256 & 512 & 1024
\tabularnewline
\hline 

& Binary Search  & 0.0294 & 0.0382 & 0.0483 & 0.0786 & 0.1469 & 0.2523 & 0.4617 \\
& Level Function  & 0.1689 & 0.3183 & 0.7678 & 2.3155 & 7.0634 & 14.1898 & 30.5062 \\

\hline 
\end{tabular}
\par\end{centering}
\caption{Running time of the binary search algorithm compared to the level function method with increasing sampled points and fixed dimension $N=2$ (in seconds).}
\label{table:scalability-samples-maxmin}
\end{table}

\section{Conclusion}\label{sec:conclusion}
This paper has put forward a new RO framework for solving quasiconcave maximization problems with an ambiguous objective. This framework covers several existing convex RO models as special cases, such as robust linear programming with an ambiguous objective. Despite this greater generality, we can still solve our RO problem efficiently based on its special problem structure. In particular, we can fully characterize the upper level sets of the worst-case objective function.
We then only need to solve a logarithmic number of convex optimization problems based on these upper level sets to solve our RO problem. In this view, solving quasiconcave RO problems is not much harder than solving some ordinary convex RO problems. Our numerical experiments on a Cobb-Douglas production efficiency problem {\color{black} and a max-min utility problem} further support the value of our new RO framework. Approximation with piecewise linear quasiconcave functions (our method) outperformed piecewise constant quasiconcave and concave approximation. Furthermore, our binary search algorithm can solve our RO problem much more efficiently compared to general purpose quasiconcave maximization algorithms (e.g., the level function method).

There are several directions for future research. First, our RO model here is based on the quasiconcave envelope of a data sample subject to additional functional properties. We would like to incorporate other forms of ambiguity sets, e.g., those based on a ball, into robust quasiconcave maximization.
Second, we would like to extend our method to  quasiconcave maximization with additional stochastic uncertainty.
Finally, we would like to address maximizing quasiconcave functions in the online setting, where we seek low regret algorithms based only on function evaluations.

\bibliography{References.bib}

\newpage
\appendix
\numberwithin{equation}{section}
\setcounter{equation}{0}
\setcounter{page}{1}
\small

\begin{center}
    \interlinepenalty=10000
    \Large Appendix for ``Robust Data-Driven Quasiconcave Optimization''
\end{center}

\section{Alternative Representation}
\label{sec:alternative}
%{\color{orange}{WJ: I suggest to remove this section or move it into the appendix.}}{\color{black}WBH: I would be okay with this too, we should just mention it in the response if we do it.}
%\jw{JW: I also agree with the change. But I have to check whether we have mentioned this part in the introduction and change them accordingly.}
This section gives an alternative representation of our worst-case objective function which highlights the role of a set of convex performance functions and targets for all levels of the worst-case objective.
This result shows that $\psi_{{\cal U}}$ can be represented by a set of $J$ convex functions, and a continuum of targets for all levels. We explicitly determine this set of convex functions (which are closely related to the upper level sets of $\psi_{{\cal U}}$) as well as the targets. This result is based on the choice model in \cite{brown2012aspirational} and its representation in terms of a family of convex risk measures and targets.

%{\color{red}HX: As a matter of curiosity, is the worst-case function evenly quasiconcave? That is, is every upper level set evenly convex? i.e, it can be represented by intersection of open half spaces. WBH: I think closed and convex implies evenly convex, so this property holds because the upper level sets of the worst-case function are closed and convex. I think we do not need to mention this.}

We first derive the representation of $\psi_{{\cal U}}$.
We can represent $\psi_{{\cal U}}$ in terms of a set of convex functions (one for every input $\theta_j$ for $j \in [J]$) and targets (one for every level $\upsilon \leq \upsilon_{\max}$).
First we define constants
$
c_j \triangleq -\inf \left\{m\in \mathbb{R} \mid m\, {\vec 1}_{N} \geq \sum_{\theta\in \mathcal{D}_j} \tilde{\theta} \cdot p_\theta,\,
\sum_{\theta\in \mathcal{D}_j} p_\theta =1,\, p \in \mathbb{R}^j_{\geq 0}\right\},\,j \in [J],
$
and upper level sets
$
{\cal A}_j \triangleq \left\{x\in {\cal X} \mid x \geq \sum_{\theta\in \mathcal{D}_j} \tilde{\theta} \cdot p_\theta + c_j,\,\sum_{\theta\in \mathcal{D}_j} p_\theta =1,\, p \in \mathbb{R}^j_{\geq 0}\right\},\,j \in [J],
$
corresponding to each $\mathcal{D}_j$ (note these correspond to the characterization of the upper level sets in Theorem~\ref{thm:upper_level}).
Then, we define convex functions $\mu_j : \mathbb{R}^N \rightarrow \mathbb{R}$ via:
$
\mu_j(x) \triangleq \inf_{m \in \mathbb{R}} \left\{m \mid x + m\,{\vec 1}_{N} \in {\cal A}_j \right\},\,\forall x \in {\cal X},
$
for all $j \in [J]$. We can interpret $\mu_j(x)$ as the minimum amount which must be added to every component of $x$ to put it in the upper level set ${\cal A}_j$, so smaller values of $\mu_j(x)$ mean $x$ is closer to/deeper into ${\cal A}_j$. The value $\mu_j(x)$ is the cost of putting $x$ into ${\cal A}_j$.

Next we establish the properties of $\{\mu_j\}_{j \in [J]}$.
We say $\mu_j$ is translation invariant (with respect to the direction ${\vec 1}_N$) if $\mu_{j}(x + \beta\,{\vec 1}_{N}) = \mu_{j}(x) - \beta$ for all $\beta \in \mathbb{R}$.
We say $\mu_j$ is normalized if $\mu_j(0) = 0$ for all $j \in [J]$.
We require the following technical condition so that $\mu_j$ are normalized.

\begin{assumption}
\label{assu:zero}
We have $0 \in {\cal X}$.
\end{assumption}
\noindent
Assumption~\ref{assu:zero} can be met without loss of generality by translation of the data points in $\Theta$.

\begin{proposition}
\label{prop:aspiration risk measure}
Suppose Assumption~\ref{assu:zero} holds.

(i) $\{\mu_j\}_{j \in [J]}$ are monotone, convex, translation invariant, and normalized.

(ii) $\mu_j \geq \mu_{j+1}$ for all $j \in [J-1]$.
\end{proposition}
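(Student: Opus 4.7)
For part (i), I would derive each of the four properties from the corresponding property of the set $\mathcal{A}_j$ via the defining infimum. Monotonicity and convexity descend directly: if $x \leq y$ and $x + m\,{\vec 1}_N \in \mathcal{A}_j$, then $y + m\,{\vec 1}_N \geq x + m\,{\vec 1}_N \in \mathcal{A}_j$, and $\mathcal{A}_j$ is upward monotone by its defining inequality, so $\mu_j(y) \leq \mu_j(x)$; similarly, convex combinations of feasible triples $(x_i, m_i, p_i)$ yield a feasible certificate for $(\lambda x_1 + (1-\lambda) x_2,\, \lambda m_1 + (1-\lambda) m_2)$ because $\sum \tilde\theta p_\theta + c_j {\vec 1}_N$ is linear in $p$ and the ambient $\mathcal{A}_j$ is convex. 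Translation invariance follows from the change of variable $m' = m + \beta$ in the defining infimum. For normalization, under Assumption~\ref{assu:zero} the substitution $x = 0$ together with $m' = m - c_j$ gives $\mu_j(0) = c_j + \inf\{m' \mid m' {\vec 1}_N \geq \sum_{\theta \in \mathcal{D}_j} \tilde\theta p_\theta,\, \sum p_\theta = 1,\, p \geq 0\}$, and the inner infimum equals $-c_j$ by the very definition of $c_j$, so $\mu_j(0) = 0$.

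For part (ii), my plan is to reduce $\mu_j \geq \mu_{j+1}$ to the set inclusion $\mathcal{A}_j \subset \mathcal{A}_{j+1}$. Once this inclusion is established, any $m$ feasible in the infimum for $\mu_j(x)$ is automatically feasible for $\mu_{j+1}(x)$, so taking infima yields $\mu_{j+1}(x) \leq \mu_j(x)$ pointwise. To prove the inclusion, I would take an arbitrary $x \in \mathcal{A}_j$ with certificate $p$ supported on $\mathcal{D}_j$ and construct a certificate $p'$ supported on $\mathcal{D}_{j+1}$ witnessing $x \in \mathcal{A}_{j+1}$. The natural candidate is a convex combination of the trivial extension of $p$ (with zero weight on $\theta_{j+1}$) and an optimizer $p^{*}$ realizing the infimum that defines $c_{j+1}$, with the mixing weight chosen so the resulting shift matches $c_{j+1}$ exactly while preserving the vector inequality $x \geq \sum_{\theta \in \mathcal{D}_{j+1}} \tilde\theta p'_\theta + c_{j+1} {\vec 1}_N$.

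The main obstacle will be managing the two shift constants $c_j$ and $c_{j+1}$ simultaneously. Enlarging the feasible set from $\mathcal{D}_j$ to $\mathcal{D}_{j+1}$ can move $c_{j+1}$ away from $c_j$, so simply re-using the trivially extended $p$ leaves a gap in the shift that must be absorbed by using $\tilde\theta_{j+1}$ explicitly. The key lever is that $\tilde\theta_{j+1} = \theta_{j+1} - (\hat{v}(\theta_{j+1})/L) {\vec 1}_N$ is translated further in the $-{\vec 1}_N$ direction whenever $\hat{v}(\theta_{j+1}) \leq \hat{v}(\theta_j)$, and combining the certificate for $x$ with the $p^{*}$ that exactly realizes the boundary of $\mathcal{A}_{j+1}$ along the $ {\vec 1}_N$-axis should provide the slack needed to carry $x$ into $\mathcal{A}_{j+1}$.
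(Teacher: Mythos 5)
Part (i) of your proposal is correct and follows essentially the same route as the paper: monotonicity and convexity are inherited from the monotone, convex (polyhedral) sets ${\cal A}_j$, translation invariance is the change of variable in the infimum, and your normalization argument (substitute $x=0$ and $m'=m-c_j$, so the inner infimum equals $-c_j$ by the definition of $c_j$) is a slightly cleaner version of the paper's two-case check.

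Part (ii) is where there is a genuine gap. Your reduction --- ${\cal A}_j \subset {\cal A}_{j+1}$ implies $\mu_{j+1}\le\mu_j$ pointwise --- is exactly the paper's, but the paper obtains the inclusion in one line from ${\cal D}_j\subset{\cal D}_{j+1}$ and never performs the constant-matching construction you describe; you, in turn, never carry it out either: your part (ii) ends with ``should provide the slack,'' which is a plan, not a proof. Moreover, the heuristic driving the plan points the wrong way. Since $\hat v(\theta_{j+1})\le\hat v(\theta_j)$, the new point is translated \emph{less} far in the $-{\vec 1}_N$ direction, not further; and because the infimum defining $c_{j+1}$ is taken over a larger simplex, one has $c_{j+1}\ge c_j$, so the constraint defining ${\cal A}_{j+1}$ is \emph{tighter} for the trivially extended certificate, leaving a nonnegative gap $(c_{j+1}-c_j){\vec 1}_N$ that your mixing step must absorb. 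It cannot do so in general: take $N=2$, $L=1$, $\theta_j=(2,0)$ with $\hat v(\theta_j)=1$ and $\theta_{j+1}=(-1,1)$ with $\hat v(\theta_{j+1})=0$ (and ${\cal X}$ a box containing these points and the origin), so that $\tilde\theta_j=(1,-1)$, $\tilde\theta_{j+1}=(-1,1)$, $c_j=-1$, $c_{j+1}=0$. The point $x=(0,-2)$ satisfies $x\ge\tilde\theta_j+c_j{\vec 1}_N$, yet for any mixing weight $t\in[0,1]$ the requirement $x\ge(1-t)\tilde\theta_j+t\,\tilde\theta_{j+1}+c_{j+1}{\vec 1}_N=(1-2t,\,2t-1)$ forces $t\ge 1/2$ from the first coordinate and $t\le -1/2$ from the second, so no certificate of the form you propose (indeed no certificate on $\{\tilde\theta_j,\tilde\theta_{j+1}\}$ at all) exists. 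So the route you sketch cannot close part (ii). The paper's own proof simply asserts the nestedness of the acceptance sets from the growth of the data set and does not engage with the shift constants; your instinct that the constants require handling is the right thing to flag, but the repair you outline is neither executed nor, as the example shows, executable, so as written part (ii) is not proved.
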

\begin{proof}
    (i) First, note that all of the sets ${\cal A}_j$ are monotone by construction. If $x \in {\cal A}_j$ and $y \geq x$, then $y \in {\cal A}_j$.
Next, choose $x \leq y$, then $y + m {\vec 1}_{N} \geq x + m {\vec 1}_{N}$ for all $m \in \mathbb{R}$. It follows that $\mu_{j}(x) \geq \mu_{j}(y)$.

All ${\cal A}_j$ are convex because they are polyhedra. Pick $x_1, x_2 \in {\cal X}$, $\epsilon > 0$, and $\lambda \in [0,1]$. For $i = 1, 2$, let $m_i$ satisfy $x_i + m_i {\vec 1}_{N}$ and $m_i \leq \mu_j(x_i) + \epsilon$. Then $x_i + m_i {\vec 1}_{N} \in {\cal A}_j$ for $i=1,2$, and so
$
\lambda (x_1 + m_1 {\vec 1}_{N}) + (1-\lambda) (x_2 + m_2 {\vec 1}_{N}) = \lambda x_1 + (1-\lambda) x_2 + (\lambda m_1 + (1-\lambda) m_2) {\vec 1}_{N} \in {\cal A}_j,
$
by convexity of ${\cal A}_j$. It follows that
$
\mu_j(\lambda x_1 + (1-\lambda) x_2) \leq \lambda m_1 + (1-\lambda) m_2 \leq \lambda \mu_j(x_1) + (1-\lambda) \mu_j(x_2) + \epsilon.
$
Since $\epsilon$ was arbitrary, we conclude that $\mu_j$ is convex.

To check translation invariance, for any $\beta \in \mathbb{R}$ we have:
\begin{align*}
\mu_{j}(x + \beta\,{\vec 1}_{N}) = & \inf_{m\in \mathbb{R}} \left\{m \mid x + (m + \beta) {\vec 1}_{N} \in \mathcal{A}_j \right\}\\
= & \inf_{y\in \mathbb{R}} \left\{y - \beta \mid x + y\, {\vec 1}_{N} \in \mathcal{A}_j \right\}\\
= & \inf_{m\in \mathbb{R}} \left\{m \mid x + m\, {\vec 1}_{N} \in \mathcal{A}_j \right\} - \beta,
\end{align*}
which shows that $\mu_j$ is translation invariant.

To check normalization, let $x=0$ by Assumption~\ref{assu:zero} so $x + m\,{\vec 1}_{N} = m\,{\vec 1}_{N}$. By the definition of $c_j$, for any $m > 0$, we have:
$
x + m\,{\vec 1}_{N} - c_j = m\,{\vec 1}_{N} - c_j \geq \sum_{\theta\in \mathcal{D}_j} \tilde{\theta} \cdot p_\theta
$
for some $p$. It then follows that $x + m\,{\vec 1}_{N} \in \mathcal{A}_j$ by definition of $\mu_j$. Now, for any $m <0 $, there is no $p$ such that $x + m\,{\vec 1}_{N} - c_j \geq \sum_{\theta\in \mathcal{D}_j} \tilde{\theta} \cdot p_\theta$, which means that $x + m\,{\vec 1}_{N} \notin \mathcal{A}_j$. Hence, we conclude $\mu_j(0) = 0$.

(ii) By construction, ${\cal A}_j \subset {\cal A}_{j+1}$ since ${\cal D}_j \subset {\cal D}_{j+1}$ for all $j\in[J-1]$. If $x + m\, {\vec 1}_{N} \in \mathcal{A}_j$, then $x + m\, {\vec 1}_{N} \in \mathcal{A}_{j+1}$. Consequently, $\mu_{j+1}(x) \leq \mu_j(x)$.
\end{proof}
Proposition~\ref{prop:aspiration risk measure} has close connections with the theory of risk measures.
In the language of risk measures, $\mathcal{A}_j$ are ``acceptance sets'' which describe a set of satisfactory positions. Notice that they are convex sets, and also monotone so if $x \in \mathcal{A}_j$ is acceptable then all $y \geq x$ are as well. We have $\mathcal{A}_j \subset \mathcal{A}_{j+1}$ so the acceptance sets become more stringent as $j$ decreases.
Corresponding to each acceptance set $\mathcal{A}_j$, $\mu_j$ is called a convex risk measure which returns the minimum amount of cash $m$ needed to make $x + m$ acceptable. Then, we have the relationship $\mathcal{A}_j = \{x : \mu_j(x) \leq 0\}$.

Now we define the target function $\tau(\upsilon) \triangleq \upsilon/L-c_{\kappa(\upsilon)}$ for all levels $\upsilon \leq \upsilon_{\max}$. We specify that $\tau(\upsilon)$ is the target for all $N$ components of $x$ (so $\tau(\upsilon) {\vec 1}_N$ appears in the following expression to compare to $x$).

\begin{theorem}
\label{thm:aspiration}
Suppose Assumption~\ref{assu:zero} holds.

(i) The target function $\tau(\upsilon)=\upsilon/L-c_{\kappa(\upsilon)}$ is non-decreasing in $\upsilon$.

(ii) We have
\begin{equation}
\label{eq:aspirational}
    \psi_{{\cal U}}(x) = \sup_{\upsilon \leq \upsilon_{\max}}\left\{ \upsilon \mid \mu_{\kappa(\upsilon)}(x - \tau(\upsilon) {\vec 1}_{N})\leq 0\right\},\, \forall x \in {\cal X}.
\end{equation}
\end{theorem}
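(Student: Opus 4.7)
The plan is to prove part (i) by tracing the monotonicity of $\tau$ through the composition $\kappa \circ c$, and to prove part (ii) by a chain of equivalences that reduces the displayed sup to the upper level set characterization of Theorem~\ref{thm:upper_level} combined with Proposition~\ref{prop:level}(i).

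For part (i), first I would show that $j \mapsto c_j$ is non-decreasing. The feasible set in the infimum defining $c_j$ grows with $j$ (any probability vector $p \in \mathbb{R}_{\geq 0}^{j'}$ on $\mathcal D_{j'}$ extends to one on $\mathcal D_j$ for $j \geq j'$ by padding with zeros), so the infimum is non-increasing in $j$, hence $c_j = -\inf(\cdot)$ is non-decreasing. Next, by Definition~\ref{Def:level-select}, $\kappa(\upsilon)$ is non-increasing in $\upsilon$ (since the values $\hat v(\theta_j)$ are sorted in decreasing order). Composing, $c_{\kappa(\upsilon)}$ is non-increasing in $\upsilon$, so $-c_{\kappa(\upsilon)}$ is non-decreasing. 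Adding the strictly increasing term $\upsilon/L$ preserves monotonicity, which gives (i).

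For part (ii), the key step is the equivalence $\mu_{\kappa(\upsilon)}(x-\tau(\upsilon){\vec 1}_N)\leq 0 \iff x-\tau(\upsilon){\vec 1}_N \in \mathcal A_{\kappa(\upsilon)}$. One direction follows by taking $m = 0$ in the definition of $\mu_{\kappa(\upsilon)}$; the other uses monotonicity of $\mathcal A_{\kappa(\upsilon)}$ (from Proposition~\ref{prop:aspiration risk measure}(i)) together with the fact that if $y + m{\vec 1}_N \in \mathcal A_{\kappa(\upsilon)}$ for some $m\leq 0$, then $y \geq y + m{\vec 1}_N$ forces $y \in \mathcal A_{\kappa(\upsilon)}$. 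Unfolding the definition of $\mathcal A_{\kappa(\upsilon)}$, the membership condition becomes: there exists a probability vector $p$ on $\mathcal D_{\kappa(\upsilon)}$ such that
\begin{equation*}
x \geq \sum_{\theta \in \mathcal D_{\kappa(\upsilon)}} \tilde\theta \cdot p_\theta + \bigl(\tau(\upsilon) + c_{\kappa(\upsilon)}\bigr){\vec 1}_N.
\end{equation*}
Since $\tau(\upsilon) + c_{\kappa(\upsilon)} = \upsilon/L$ by definition of $\tau$, this is exactly the defining condition for $x \in \mathcal A(\psi_{\mathcal U}, \upsilon)$ from Theorem~\ref{thm:upper_level}.

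To conclude, I would take the supremum over $\upsilon \leq \upsilon_{\max}$ on both sides. The left-hand condition holds if and only if $\psi_{\mathcal U}(x) \geq \upsilon$, so the supremum equals $\sup\{\upsilon \leq \upsilon_{\max} : \psi_{\mathcal U}(x) \geq \upsilon\}$. Since the constant function $f \equiv \upsilon_{\max}$ lies in $\mathcal U$, we have $\psi_{\mathcal U}(x) \leq \upsilon_{\max}$ for every $x$, so the restriction $\upsilon \leq \upsilon_{\max}$ is vacuous, and Proposition~\ref{prop:level}(i) identifies this supremum with $\psi_{\mathcal U}(x)$. The main obstacle is only notational: carefully tracking that the scalar $c_j$ in the definition of $\mathcal A_j$ acts as $c_j {\vec 1}_N$, so that the shift $\tau(\upsilon) + c_{\kappa(\upsilon)} = \upsilon/L$ aligns exactly with the shift in Theorem~\ref{thm:upper_level}; everything else is bookkeeping.
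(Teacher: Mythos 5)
Your proof of part (ii) is correct and follows essentially the same route as the paper: it combines Proposition~\ref{prop:level}(i) with the level-set characterization of Theorem~\ref{thm:upper_level} and the definitions of $\mathcal{A}_{\kappa(\upsilon)}$, $\mu_{\kappa(\upsilon)}$, and $\tau(\upsilon)$, just unfolding the chain of equivalences in the reverse direction (the only small point to make explicit is that $\mu_j(y)\leq 0 \Rightarrow y\in\mathcal{A}_j$ uses closedness of the polyhedron $\mathcal{A}_j$ so the infimum is attained, a fact the paper likewise asserts without comment). Your part (i) argument --- $c_j$ non-decreasing in $j$ via padding $p$ with zeros, $\kappa(\upsilon)$ non-increasing in $\upsilon$, then composing --- is also correct and in fact supplies a justification the paper's appendix leaves implicit.
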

\begin{proof}
    First, we have $\psi_{{\cal U}}(x) = \sup\{\upsilon \leq \upsilon_{\max} : x \in \mathcal{A}(\psi_{{\cal U}}, \upsilon)\}$ for all $x \in {\cal X}$ by Proposition~\ref{prop:level}.
By Theorem~\ref{thm:upper_level}, we can write $\mathcal{A}(\psi_{{\cal U}}, \upsilon)$ explicitly as:
\[\mathcal{A}(\psi_{{\cal U}}, \upsilon) = \left\{ x \mid x \geq \sum_{j=1}^{\kappa(\upsilon)} \tilde{\theta} \cdot p_\theta + \upsilon/L ,\,
\sum_{j=1}^{\kappa(\upsilon)} p_\theta =1,\, p \in \mathbb{R}^j_{\geq 0}\right\}.\]
It then follows that:
\begin{align*}
\psi_{{\cal U}}(x) &= \sup\left\{\upsilon \leq \upsilon_{\max} \mid  x \geq \sum_{j=1}^{\kappa(\upsilon)} \tilde{\theta} \cdot p_\theta + \upsilon/L ,\,
\sum_{j=1}^{\kappa(\upsilon)} p_\theta =1,\, p \geq 0\right\}\\
& = \sup\left\{\upsilon \leq \upsilon_{\max} \mid  x - \upsilon/L + c_{\kappa(\upsilon)} \in \mathcal{A}_{\kappa(\upsilon)}\right\}\\
& = \sup\left\{\upsilon \leq \upsilon_{\max} \mid  \mu_{\kappa(\upsilon)}(x -\upsilon/L + c_{\kappa(\upsilon)})\leq 0\right\}\\
& = \sup\left\{\upsilon \leq \upsilon_{\max} \mid  \mu_{\kappa(\upsilon)}(x -\tau(\upsilon))\leq 0\right\}.
\end{align*}
In the above display: the second equality follows from the definition of $\mu_j$ where $j = \kappa(\upsilon)$; the third equality follows from the fact that $\mu_j(x)\leq 0$ for $x \in {\cal X}$ if and only if $x \in \mathcal{A}_j$; and the last equality follows from the definition of the target function $\tau(\upsilon)$.
\end{proof}

\noindent
Eq.~\eqref{eq:aspirational} can be viewed as a decomposition of $\psi_{{\cal U}}$ into a set of convex cost functions $\{\mu_j\}_{j \in [J]}$ and a target function $\tau(\upsilon)$. Note that there is a convex cost function for every input $j \in [J]$ in the original data sample.

%Our worst-case evaluation $\psi_{{\cal U}}$ is a special aspirational measure with empty concentration-favoring set, since the performance functions $\{\mu_j\}_{j \in [J]}$ are all convex.

We can alternatively choose upper level sets and targets to construct an evaluation function $\psi \in \mathcal{F}_{\text{QCo}}$.
We can generalize the more specific Theorem~\ref{thm:aspiration} with the following ingredients:
\begin{itemize}
    \item An index function $\kappa : \mathbb{R} \rightarrow [L]$ that is decreasing for $L \geq 1$.
    \item A collection $\{\mu_l\}$ of monotone, convex, translation invariant, and normalized functions. Furthermore, $\{\mu_l\}$ satisfy $\mu_l \geq \mu_{l+1}$ for all $l \in [L-1]$.
    \item A target function $\vec{\tau} : \mathbb{R} \rightarrow \mathbb{R}^N$ that is component-wise non-decreasing in $\upsilon$, where $\vec{\tau}(\upsilon) = (\tau_1(\upsilon), \ldots, \tau_N(\upsilon))$ and $\tau_n(\upsilon)$ is the target for component $n$ at level $\upsilon$.
\end{itemize}
This setup allows more flexibility in designing a valuation function $\psi : {\cal X} \rightarrow \mathbb{R}$ defined by:
\begin{equation}
\label{eq:construct_targets}
    \psi(x) = \sup\{\upsilon : \mu_{\kappa(\upsilon)}(x - \vec{\tau}(\upsilon)) \leq 0\},\, x \in {\cal X}.
\end{equation}
Eq.~\eqref{eq:construct_targets} can be interpreted as the largest valuation $\upsilon$ such that $x$ is acceptable relative to the target $\vec{\tau}(\upsilon)$.

\begin{theorem}
\label{thm:construct_targets}
Let $\psi$ be defined by Eq.~\eqref{eq:construct_targets}. Then $\psi \in \mathcal{F}_{\text{QCo}}$.
\end{theorem}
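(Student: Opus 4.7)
The plan is to verify the two defining properties of $\mathcal{F}_{\text{QCo}}$ for $\psi$ separately: monotonicity and quasiconcavity. I will work throughout with the feasibility set $A(x) \triangleq \{\upsilon \in \mathbb{R} : \mu_{\kappa(\upsilon)}(x - \vec{\tau}(\upsilon)) \leq 0\}$, so that $\psi(x) = \sup A(x)$. The key structural claim, which I would establish first, is that $A(x)$ is a \emph{lower set} in $\mathbb{R}$: if $\upsilon_1 \leq \upsilon_2$ and $\upsilon_2 \in A(x)$, then $\upsilon_1 \in A(x)$. This combines three monotonicities from the hypotheses. Since $\vec{\tau}$ is non-decreasing, $\vec{\tau}(\upsilon_1) \leq \vec{\tau}(\upsilon_2)$, hence $x - \vec{\tau}(\upsilon_1) \geq x - \vec{\tau}(\upsilon_2)$, and monotonicity of $\mu_{\kappa(\upsilon_1)}$ (in the risk-measure sense of non-increasing in its argument, as in Proposition~\ref{prop:aspiration risk measure}(i)) gives $\mu_{\kappa(\upsilon_1)}(x - \vec{\tau}(\upsilon_1)) \leq \mu_{\kappa(\upsilon_1)}(x - \vec{\tau}(\upsilon_2))$. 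Since $\kappa$ is decreasing, $\kappa(\upsilon_1) \geq \kappa(\upsilon_2)$, and since $\mu_l \geq \mu_{l+1}$ pointwise, this yields $\mu_{\kappa(\upsilon_1)}(x - \vec{\tau}(\upsilon_2)) \leq \mu_{\kappa(\upsilon_2)}(x - \vec{\tau}(\upsilon_2)) \leq 0$, establishing $\upsilon_1 \in A(x)$.

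With the lower-set property in hand, I would re-express the upper level sets of $\psi$. For any $\upsilon \in \mathbb{R}$, $\psi(x) \geq \upsilon$ holds iff $\sup A(x) \geq \upsilon$ iff $(-\infty,\upsilon) \subseteq A(x)$, hence
\begin{equation*}
\{x \in {\cal X} : \psi(x) \geq \upsilon\} = \bigcap_{\upsilon' < \upsilon} \{x \in {\cal X} : \mu_{\kappa(\upsilon')}(x - \vec{\tau}(\upsilon')) \leq 0\}.
\end{equation*}
For each fixed $\upsilon'$, the map $x \mapsto \mu_{\kappa(\upsilon')}(x - \vec{\tau}(\upsilon'))$ is convex as a translation of the convex function $\mu_{\kappa(\upsilon')}$, so each set in the intersection is convex. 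An intersection of convex sets is convex, so every upper level set of $\psi$ is convex, which is precisely the definition of quasiconcavity.

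Monotonicity follows by a direct comparison of the feasibility sets. If $x_1 \leq x_2$ and $\upsilon \in A(x_1)$, then $x_2 - \vec{\tau}(\upsilon) \geq x_1 - \vec{\tau}(\upsilon)$, and the non-increasing monotonicity of $\mu_{\kappa(\upsilon)}$ yields $\mu_{\kappa(\upsilon)}(x_2 - \vec{\tau}(\upsilon)) \leq \mu_{\kappa(\upsilon)}(x_1 - \vec{\tau}(\upsilon)) \leq 0$, so $\upsilon \in A(x_2)$. Thus $A(x_1) \subseteq A(x_2)$ and $\psi(x_1) \leq \psi(x_2)$, giving $\psi \in \mathcal{F}_{\text{QCo}}$.

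The main subtlety, and the only place where I expect a reader to pause, is the reading of ``monotone'' for the $\mu_l$: consistent with Proposition~\ref{prop:aspiration risk measure} and the interpretation of $\mu_l$ as the minimal translation needed to enter an upward-closed acceptance set, monotone means non-increasing in $x$. Once this convention is fixed, the three separate monotonicity hypotheses (of $\kappa$ in $\upsilon$, of $\mu_l$ in $l$, of $\vec{\tau}$ in $\upsilon$, and of each $\mu_l$ in $x$) align cleanly to produce the lower-set property of $A(x)$, after which quasiconcavity and monotonicity of $\psi$ are immediate.
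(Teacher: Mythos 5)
Your proof is correct, and the core inequality chain is the same as the paper's: you combine the non-decrease of $\vec{\tau}$, the (risk-measure, i.e.\ non-increasing) monotonicity of each $\mu_l$ in its argument, the decrease of $\kappa$, and $\mu_l \geq \mu_{l+1}$ in exactly the way the paper does in its central display. Where you differ is in how quasiconcavity is extracted from that chain. You package the chain as downward-closedness of the feasibility set $A(x)$, then identify each upper level set $\{x : \psi(x)\geq \upsilon\}$ with $\bigcap_{\upsilon'<\upsilon}\{x : \mu_{\kappa(\upsilon')}(x-\vec{\tau}(\upsilon'))\leq 0\}$ and invoke convexity of sublevel sets of the convex functions $x \mapsto \mu_{\kappa(\upsilon')}(x-\vec{\tau}(\upsilon'))$; the paper instead verifies the two-point definition directly, picking $\upsilon_1 \leq \upsilon_2$ within $\epsilon$ of the respective suprema, transferring $x_2$ into the level-$\upsilon_1$ acceptance set via the same chain, and applying convexity of $\mu_{\kappa(\upsilon_1)}$ at the convex combination. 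Your route avoids the $\epsilon$-bookkeeping and makes the level-set structure explicit (nicely parallel to Theorem~\ref{thm:upper_level}), at the small cost of invoking the equivalence between quasiconcavity and convexity of all upper level sets, which is standard but is not literally the paper's Definition~2.1(b) (you call it ``precisely the definition''; it is an equivalent characterization, and the one-line bridge should be acknowledged). Your reading of ``monotone'' for the $\mu_l$ as non-increasing is the right one: it conflicts with Definition~2.1(c) but matches what Proposition~\ref{prop:aspiration risk measure} actually proves and what the paper's own proof of this theorem uses, so flagging and fixing that convention is appropriate rather than a gap.
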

\begin{proof}
    First we verify that $\psi$ is monotone. Choose $x \in {\cal X}$, $\epsilon > 0$, and $\upsilon$ such that $\mu_{\kappa(\upsilon)}(x - \vec{\tau}(\upsilon)) \leq 0$ and $\upsilon \geq \psi(x) - \epsilon$. For any $y \geq x$, since all $\{\mu_l\}$ are monotone we have $\mu_{\kappa(\upsilon)}(y - \vec{\tau}(\upsilon)) \leq 0$. It follows that $\psi(y) \geq \upsilon \geq \psi(x) - \epsilon$. Since $\epsilon$ was arbitrary, we conclude $\psi(y) \geq \psi(x)$.

Next we verify that $\psi$ is quasiconcave. Choose $x_1, x_2 \in {\cal X}$. Without loss of generality, suppose $\psi(x_1) \leq \psi(x_2)$.
Then choose $\epsilon > 0$, and $\upsilon_1, \upsilon_2$ such that $\upsilon_1 \leq \upsilon_2$ and $\upsilon_i \geq \psi(x_i) - \epsilon$ for $i=1,2$. We have $\mu_{\kappa(\upsilon_i)}(x_i - \vec{\tau}(\upsilon_i)) \leq 0$ for $i=1,2$ by definition of $\psi$. In addition, we have:
$
\mu_{\kappa(\upsilon_1)}(x_2 - \vec{\tau}(\upsilon_1)) \leq \mu_{\kappa(\upsilon_1)}(x_2 - \vec{\tau}(\upsilon_2)) \leq \mu_{\kappa(\upsilon_2)}(x_2 - \vec{\tau}(\upsilon_2)) \leq 0,
$
where the first inequality follows by monotonicity of $\mu_{\kappa(\upsilon_1)}$ and the second inequality follows by monotonicity of $\mu_l$ in $l$.
By convexity of $\mu_{\kappa(\upsilon_1)}$, we have:
$
\mu_{\kappa(\upsilon_1)}(\lambda x_1 + (1-\lambda) x_2 - \vec{\tau}(\upsilon)) \leq 0,\, \forall \lambda \in [0,1].
$
Then, we have
$
\psi(\lambda x_1 + (1-\lambda) x_2) \geq \upsilon_1 \geq \min\{\psi(x_1), \psi(x_2)\} - \epsilon.
$
Since $\epsilon$ was arbitrary, we have $\psi(\lambda x_1 + (1-\lambda) x_2) \geq \min\{\psi(x_1), \psi(x_2)\}$.
\end{proof}
The target function for our worst-case evaluation function $\psi_{{\cal U}}$ in Eq.~\eqref{eq:aspirational} is $\vec{\tau}(\upsilon) = \tau(\upsilon) {\vec 1}_{N}$ where all the components are identical, and the convex loss functions $\{\mu_j\}_{j \in [J]}$ are based on the upper level sets $\{{\cal A}_j\}_{j \in [J]}$ corresponding to our data sample. Eq.~\eqref{eq:construct_targets} generalizes this construction by giving more flexibility in the choice of convex performance functions and targets.

\section{Permutation Invariance}
\label{sec:invariance}

In this section, we incorporate the property of permutation invariance into our robust choice model.
This property naturally emerges in many applications.
For instance, we may be evaluating the utility of a group of identical customers, so interchanging their order should not change the overall evaluation.
Permutation invariance also corresponds to the property of law invariance of choice functions in the decision theory literature.

To define a set of permutation invariant functions in $\mathfrak{F}$, we first decompose the components of $x \in \mathbb{R}^N$ into groups of equal size.
Fix $M \geq 1$ to be the number of groups, and suppose $N = M K$ where $K \geq 1$ is the size of each group (so $M$ evenly divides $N$).
Then let the set $\Omega = \{\omega_1, \ldots, \omega_M\}$ index groups, so we can write $x = (x(\omega_1),\ldots,x(\omega_M))$ where each $x(\omega_m) = (x_{K (m-1) + 1}, \ldots, x_{K m}) \in \mathbb{R}^K$ for $m \in [M]$.
For example, in multi-objective stochastic optimization, $M$ is the number of problem scenarios (for optimization under uncertainty) and $K$ is the number of problem attributes.
This setup gives us flexibility in the implementation of permutation invariance, depending on the problem at hand.

Now let $\Sigma$ denote the set of all permutations $[M] \rightarrow [M]$, and $\sigma \in \Sigma$ denote a specific permutation. Then we define $\sigma(x) = (x(\omega_{\sigma(1)}), \ldots, x(\omega_{\sigma(M)}))$ to be a permutation of $x$ according to $\sigma$. 
It changes the order of the groups defined above (but does not change the order of elements within each group).

\begin{definition}
(a) A function $f : {\cal X} \rightarrow \mathbb{R}$ is permutation invariant if $f(x) = f(\sigma(x))$ for all $\sigma \in \Sigma$.

(b) We let $\mathcal{F}_{\text{QCo}}^{\dagger}$ denote the set of all permutation invariant functions in $\mathcal{F}_{\text{QCo}}$.
\end{definition}

We let
$
{\cal U}^{\dagger} = {\cal U}^{\dagger}(\mathsf{D}, L) = \{f \in \mathcal{F}_{\text{QCo}}^{\dagger} \cap {\cal F}_{\text{Lip}}(L) : f(\theta) \geq \hat{v}(\theta),\, \forall \theta \in \Theta\},
$
denote our ambiguity set for the permutation invariant case. In particular, the permutation invariant ambiguity set ${\cal U}^{\dagger} \subset {\cal U}$ is a strict subset of the original ambiguity set.

The corresponding worst-case evaluation function is $\psi_{{\cal U}^{\dagger}}$, and our RO problem becomes:
$$
\mathscr{P}({\cal U}^{\dagger}) : \max_{z \in {\cal Z}} \psi_{{\cal U}^{\dagger}}(G(z)).
$$
We will solve $\mathscr{P}({\cal U}^{\dagger})$ using the same approach we did for $\mathscr{P}({\cal U})$, computing the upper level sets and using binary search.
Our first step is to obtain the upper level sets of $\psi_{{\cal U}^{\dagger}}$.
We introduce the following additional notation:
\begin{itemize}
    \item $\vec x_k = (x_k(w_m))^M_{m=1}$ is the vector of components corresponding specifically to component $k \in [K]$ of $x$ across all $M$ groups. Then, $\sigma(\vec x_k)=(x_k(\omega_{\sigma(m)}))_{m=1}^M$ is a permutation of $\vec x_k$ which just permutes the order of the groups for fixed component $k$.
    \item $\theta_k$ is the vector of components corresponding to component $k \in [K]$ of input $\theta \in \mathbb{R}^N$ across all groups;
    \item $s_k\in \mathbb{R}^{M}$ is the subgradient of $f$ at $\theta$ corresponding to component $k \in [K]$ across all groups;
    \item $y(\theta)\in \mathbb{R}^M$ and $w(\theta) \in \mathbb{R}^M$ are auxiliary decision variables for every $\theta\in \mathcal{D}_{j}$. Let $y = \{y(\theta')\}_{\theta'\in \mathcal{D}_{j}}$ and $w = \{w(\theta')\}_{\theta'\in \mathcal{D}_{j}}$.
\end{itemize}
We define the following LP, which is used to check the upper level sets of $\psi_{{\cal U}^{\dagger}}$:
\begin{subequations}
\begin{align*}
\mathscr{P}_{\textsf{LP}}^{\dagger}(\theta;\,\mathcal{D}_{j}) : \min_{\upsilon, \xi, y, w} \quad & \upsilon\\
{\rm s.t.} \quad &  \langle {\vec 1}_{M},  y(\theta') \rangle+ \langle {\vec 1}_{M},  w(\theta') \rangle -\langle \xi, \theta \rangle+ \upsilon-\hat{v}(\theta')\geq 0, \quad \forall \theta'\in \mathcal{D}_{j},\\
& \sum^K_{k=1}\theta'_{k} s_k^{\top}-y(\theta') {\vec 1}_{M}^{\top}-{\vec 1}_{M} w(\theta')^{\top} \geq 0, \quad \forall \theta'\in \mathcal{D}_{j},\\
& \xi \geq 0,\left\|\xi\right\|_{1} \leq L.
\end{align*}
\end{subequations}
In particular, $\mathscr{P}_{\textsf{LP}}^{\dagger}(\theta;\,\mathcal{D}_{j})$ has a polynomial number of constraints even though there is an exponential number of permutations. This feature is essential for our binary search algorithm to be tractable.

\begin{proposition}\label{prop:interpolation-value-check-law}
Fix $x \in \mathbb{R}^N$ and let $\upsilon = \psi_{{\cal U}^{\dagger}}(x)$. Then $\psi_{{\cal U}^{\dagger}}(x) = \min\{\hat{v}(\theta_j), \textsf{val}(\mathscr{P}_{LP}^{\dagger}(x; \mathcal{D}_j))\}$ for $j = \kappa(\upsilon)$.
\end{proposition}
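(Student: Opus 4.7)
The plan is to mirror the proof of Proposition~\ref{prop:interpolation-value-check}, adding one crucial step that reformulates exponentially many permutation constraints via LP duality for the classical linear assignment problem.

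First I would establish a permutation-invariant analog of Proposition~\ref{prop:disjunctive}: for $j=\kappa(\upsilon)$ with $\upsilon=\psi_{{\cal U}^\dagger}(x)$, the value $\psi_{{\cal U}^\dagger}(x)$ equals the infimum of $\upsilon$ over kinked majorants $\upsilon+\max\{\langle\xi,\cdot-x\rangle,0\}$ at $x$ that dominate $\hat v(\theta')$ on $\mathcal{D}_j$. Because every $f\in{\cal U}^\dagger$ satisfies $f(\sigma(\theta'))=f(\theta')\geq\hat v(\theta')$ for all $\sigma\in\Sigma$, the kinked majorant must now dominate $\hat v(\theta')$ simultaneously at every permutation $\sigma(\theta')$. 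Data points $\theta_l$ with $\hat v(\theta_l)<\upsilon$ contribute no binding constraints since the constant branch of the kinked majorant trivially dominates them. I would then split this minimization into two branches exactly as in Proposition~\ref{prop:interpolation-value-check}: either $\upsilon\geq\hat v(\theta_j)$, in which case the constant branch alone covers $\theta_j$; or $\upsilon<\hat v(\theta_j)$, in which case the affine branch must be activated on every $\theta'\in\mathcal{D}_j$. Taking the minimum of the two corresponding optimal values yields $\min\{\hat v(\theta_j),\textsf{val}(\mathscr{P}_{LP}^\dagger(x;\mathcal{D}_j))\}$, provided I can identify the LP value in the second case.

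The crux is the LP reformulation. For each $\theta'\in\mathcal{D}_j$ the symmetrized affine constraint reads
\begin{equation*}
\upsilon+\min_{\sigma\in\Sigma}\langle\xi,\sigma(\theta')-x\rangle\geq\hat v(\theta'),
\end{equation*}
which involves $|\Sigma|$ inequalities. Writing $D_{m,m'}(\theta')\triangleq\sum_{k=1}^{K}\theta'_{k,m}\,s_{k,m'}=\langle\xi(\omega_{m'}),\theta'(\omega_m)\rangle$, the inner minimum over $\sigma$ is a classical linear assignment problem whose LP relaxation is integral by total unimodularity of the assignment polytope. Strong LP duality then equates this minimum with
\begin{equation*}
\max_{y(\theta'),\,w(\theta')}\bigl\{\langle{\vec 1}_M,y(\theta')\rangle+\langle{\vec 1}_M,w(\theta')\rangle \; : \; y_m(\theta')+w_{m'}(\theta')\leq D_{m,m'}(\theta'),\;\forall m,m'\bigr\}.
\end{equation*}
Promoting $y(\theta')$ and $w(\theta')$ to decision variables in the outer minimization over $(\upsilon,\xi)$ recovers exactly the polynomial constraint system defining $\mathscr{P}_{LP}^\dagger(x;\mathcal{D}_j)$.

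The main obstacle I anticipate is the LP duality step: one must confirm integrality of the assignment polytope so that the continuous dual $(y,w)$ faithfully represents the discrete minimum over $\Sigma$, and then verify that jointly optimizing $(\upsilon,\xi,y,w)$ leaves the optimal value unchanged relative to first taking the assignment minimum inside the constraint (the ``$\geq$'' direction is immediate from weak duality, while ``$\leq$'' needs strong duality and a careful choice of dual certificate for any candidate $(\upsilon,\xi)$). Once this identification is in place, the remainder is a direct repetition of the two-branch argument used for Proposition~\ref{prop:interpolation-value-check}.
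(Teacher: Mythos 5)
Your proposal follows essentially the same route as the paper's proof: first extend Proposition~\ref{prop:disjunctive} and Proposition~\ref{prop:interpolation-value-check} to the symmetrized data (kinked majorants dominating every $\sigma(\theta')$, then the two-branch reduction to an LP with constraints indexed by $\Sigma\times\mathcal{D}_j$), and then collapse the exponentially many constraints by recognizing $\min_{\sigma\in\Sigma}\langle\xi,\sigma(\theta')\rangle$ as a linear assignment problem whose LP relaxation is exact, invoking strong duality to introduce the auxiliary variables $y(\theta'),w(\theta')$ and recover $\mathscr{P}_{LP}^{\dagger}(x;\mathcal{D}_j)$. The duality/integrality step you flag as the main obstacle is exactly the step the paper carries out, so your outline is correct and matches the paper's argument.
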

\begin{proof}
We first introduce the disjunctive programming problem
\begin{subequations}
\label{prob:interpolation_prm}
\begin{align}
\mathscr{P}^{\dagger}(x; \mathcal{D}_j) : \min_{\upsilon \in \mathbb{R}, \xi \in \mathbb{R}^N} \quad & \upsilon\\
{\rm s.t.} \quad & \upsilon + \max\{\langle \xi, \sigma(\theta) - x \rangle, 0\} \geq \hat{v}(\theta),\, \forall \theta \in \mathcal{D}_j,\, \sigma \in \Sigma,\label{prob:interpolation_prm-2}\\
& \xi \geq 0,\, \|\xi\|_1 \leq L.
\end{align}
\end{subequations}
By the same reasoning as Proposition~\ref{prop:disjunctive}, we have that $\psi_{{\cal U}^{\dagger}}(x) = \textsf{val}(\mathscr{P}^{\dagger}(x; \mathcal{D}_j))$.
Next let $\Sigma(\mathcal{D}_j) \triangleq \{ \sigma(\theta) : \theta \in \mathcal{D}_j, \sigma \in \Sigma \}$ and introduce the LP:
\begin{subequations}
\label{prob:interpolation_prm_LP}
\begin{align}
\mathscr{P}_{LP}(x; \Sigma(\mathcal{D}_j)) : \min_{\upsilon \in \mathbb{R}, \xi \in \mathbb{R}^N} \quad & \upsilon\\
{\rm s.t.} \quad & \upsilon + \langle \xi, \sigma(\theta) - x \rangle \geq \hat{v}(\theta),\, \forall \theta \in \mathcal{D}_j,\, \sigma \in \Sigma,\label{prob:interpolation_prm_LP-2}\\
& \xi \geq 0,\, \|\xi\|_1 \leq L.
\end{align}
\end{subequations}
By the same reasoning as Proposition~\ref{prop:interpolation-value-check}, we have $\psi_{{\cal U}^{\dagger}}(x) = \min\{ \hat{v}(\theta_j), \textsf{val}(\mathscr{P}_{LP}(x; \Sigma(\mathcal{D}_j))) \}$.

We note that Eq.~\eqref{prob:interpolation_prm_LP-2} has an exponential number of constraints due to the index $\sigma \in \Sigma$. We obtain a reduction of the cut generation problem as follows.
Constraint~\eqref{prob:interpolation_prm_LP-2} is equivalent to:
\begin{equation}\label{eq:permutation-invariant-exp-constraint}
    \min_{\sigma\in \Sigma} \langle \xi, \sigma (\theta')\rangle -\langle \xi,\theta \rangle+\upsilon-\hat{v}(\theta')\geq 0, \quad \forall \theta'\in \mathcal{D}_{t}. 
\end{equation}
We will reduce the optimization problem $\min_{\sigma\in \Sigma} \langle \xi, \sigma (\theta')\rangle$ in Eq. \eqref{eq:permutation-invariant-exp-constraint}. Recall that $\xi_k$ is the subgradient of $f$ at $\theta$ corresponding to attribute $k \in [K]$. The optimal value of $\min_{\sigma\in \Sigma} \langle \xi, \sigma (\theta')\rangle$ in Eq. \eqref{eq:permutation-invariant-exp-constraint} is equal to the optimal value of:
\begin{subequations}\label{eq:assignment_multi}
\begin{eqnarray}
\min_{Q \in \mathbb{R}^{M \times M}} \, && \sum^K_{k=1}\xi_k^{\top }Q\, \theta'_{k} \\
{\rm s.t.} \, && Q^\top {\vec 1}_M={\vec 1}_M,\\
&& Q {\vec 1}_M = {\vec 1}_M,\\
&& Q_{m, l} \in\{0,1\}, \quad \forall l,\,m \in [M],
\end{eqnarray}
\end{subequations}
which is a linear assignment problem. Here $Q$ is the permutation matrix corresponding to $\sigma$ so that $Q\, \theta'_k = \sigma(\theta'_k)$ for all $k \in [K]$ (the permutation must be the same for all attributes, hence we only have a single permutation matrix $Q$). Problem \eqref{eq:assignment_multi} can be solved exactly by relaxing the binary constraints $Q_{m, l} \in\{0,1\}$ to $0 \leq Q_{m, l} \leq 1$ for all $l,\,m \in [M]$. Strong duality holds for the relaxed problem, and the optimal value of the relaxed problem is equal to:
\begin{eqnarray*}
\max_{w,y \in \mathbb R^{M}} \, && \langle {\vec 1}_M,  w \rangle+ \langle {\vec 1}_M, y \rangle \\
{\rm s.t.} \, && \sum^K_{k=1}\theta'_{k} \xi_k^{\top}-w {\vec 1}_M^{\top}-{\vec 1}_M y^{\top} \geq 0.
\end{eqnarray*}
It follows that constraint \eqref{eq:permutation-invariant-exp-constraint} is satisfied if and only if there exists $w$ and $y$ such that:
\begin{eqnarray*}
&& \langle {\vec 1}_M,  w \rangle+ \langle {\vec 1}_M, y \rangle -\langle \xi,\theta \rangle+\upsilon- \hat{v}(\theta')\geq 0,\\
&& \sum^K_{k=1}\theta'_{k} \xi_k^{\top}-w {\vec 1}_M^{\top}-{\vec 1}_M y^{\top} \geq 0,
\end{eqnarray*}
which gives the desired form of $\mathscr{P}^{\dagger}_{LP}(x;\,\mathcal{D}_j)$.
\end{proof}
The dual of $\mathscr{P}_{\textsf{LP}}^{\dagger}(x;\mathcal{D}_{j})$ leads to an explicit characterization of the upper level sets of $\psi_{{\cal U}^{\dagger}}$ in the following theorem.
\begin{theorem}\label{thm:permutation-upper}
The upper level set of $\psi_{{\cal U}^{\dagger}}$ at level $\upsilon \in (-\infty, \upsilon_{\max}]$ satisfies:
\begin{equation*}
\mathcal{A}(\psi_{{\cal U}^{\dagger}},\upsilon)=\left\{\begin{array}{l|l}
x \in {\cal X} & \begin{array}{l}
\sum_{\theta\in \mathcal{D}_{\kappa(\upsilon)}} \hat{v}(\theta) \cdot p_\theta-L\,q\geq \upsilon,\\
\sum_{\theta\in \mathcal{D}_{\kappa(\upsilon)}} \rho_\theta^{\top}\cdot \theta_k -\vec x_k\leq q, \quad \forall k \in [K]\\
\sum_{\theta\in \mathcal{D}_{\kappa(\upsilon)}} p_\theta =1, \,p \in \mathbb{R}^j_{\geq 0},\, q\geq 0,\\
\vec{1}_M^\top \rho_\theta  = p_\theta \vec{1}_M^\top,\,\rho_\theta \vec{1}_M  = p_\theta \vec{1}_M,\, \rho_\theta \in \mathbb{R}^M_{\geq 0}\times \mathbb{R}^M_{\geq 0}, \, \forall \theta\in\mathcal{D}_{\kappa(\upsilon)}
\end{array}
\end{array}\right\}.
\end{equation*}
\end{theorem}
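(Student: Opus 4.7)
The plan is to parallel the strategy that produced Theorem~\ref{thm:upper_level}, but now built on Proposition~\ref{prop:interpolation-value-check-law} instead of Proposition~\ref{prop:interpolation-value-check}. Concretely, I would characterize $x \in \mathcal{A}(\psi_{{\cal U}^{\dagger}},\upsilon)$ via $\psi_{{\cal U}^{\dagger}}(x)\geq \upsilon$, invoke Proposition~\ref{prop:interpolation-value-check-law} with $j = \kappa(\upsilon)$, and reformulate the condition $\textsf{val}(\mathscr{P}^{\dagger}_{LP}(x;\mathcal{D}_j))\geq \upsilon$ using LP duality applied to $\mathscr{P}^{\dagger}_{LP}(x;\mathcal{D}_j)$.

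First, I would note that from Definition~\ref{Def:level-select} we have $\hat v(\theta_j)\geq \upsilon$ whenever $j = \kappa(\upsilon)$, so $\min\{\hat v(\theta_j),\textsf{val}(\mathscr{P}^{\dagger}_{LP}(x;\mathcal{D}_j))\}\geq \upsilon$ reduces to $\textsf{val}(\mathscr{P}^{\dagger}_{LP}(x;\mathcal{D}_j))\geq \upsilon$. Then, since $\mathscr{P}^{\dagger}_{LP}(x;\mathcal{D}_j)$ is an LP with objective $\min\,\upsilon$, the condition $\textsf{val}\geq \upsilon$ is equivalent, by strong LP duality, to the existence of a feasible solution of the dual with dual-objective value at least $\upsilon$. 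The upper level set is therefore the projection onto $x$ of the feasibility region of the dual together with the objective cut.

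The main computational step is therefore dualizing $\mathscr{P}^{\dagger}_{LP}(x;\mathcal{D}_j)$. I would attach dual variables $p_\theta\geq 0$ to the scalar constraint indexed by $\theta \in \mathcal{D}_j$, a matrix $\rho_\theta \in \mathbb{R}^{M\times M}_{\geq 0}$ to the matrix constraint $\sum_k \theta_k \xi_k^\top - y(\theta)\vec{1}_M^\top - \vec{1}_M w(\theta)^\top \geq 0$, and $q\geq 0$ to $\|\xi\|_1\leq L$. Matching the coefficient of the free variable $\upsilon$ yields $\sum_\theta p_\theta = 1$, while the free variables $y(\theta), w(\theta)\in \mathbb{R}^M$ produce the row/column-sum marginals $\rho_\theta \vec{1}_M = p_\theta \vec{1}_M$ and $\vec{1}_M^\top \rho_\theta = p_\theta \vec{1}_M^\top$. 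The sign constraint $\xi \geq 0$ combined with $\|\xi\|_1\leq L$ yields the componentwise inequality $\sum_\theta \rho_\theta^\top \theta_k - \vec x_k \leq q\vec{1}_M$ for each attribute $k\in[K]$. The dual objective becomes $\sum_\theta \hat v(\theta)p_\theta - Lq$, and the condition that it be at least $\upsilon$ is the first displayed inequality in the theorem.

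The main obstacle is the bookkeeping for the matrix-valued constraint: the dual variable $\rho_\theta$ lives in $\mathbb{R}^{M\times M}$, and correctly identifying the contributions to each primal variable (especially the decomposition $\xi = (\xi_1,\ldots,\xi_K)$ with $\xi_k\in\mathbb{R}^M$ and how $\langle\xi,x\rangle = \sum_k \xi_k^\top \vec x_k$ interacts with the $\sum_k \theta_k\xi_k^\top$ term) requires careful indexing. Once that is done, showing nonemptiness of the dual feasible region (to invoke strong duality without a duality gap) is straightforward because $\mathscr{P}^{\dagger}_{LP}(x;\mathcal{D}_j)$ is an LP that is always feasible (take $\xi=0$ and $\upsilon$ large, together with $y,w$ chosen to satisfy the marginal constraints) and bounded below. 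Collecting the feasibility conditions and the objective cut then gives exactly the polyhedral description of $\mathcal{A}(\psi_{{\cal U}^{\dagger}},\upsilon)$ stated in the theorem.
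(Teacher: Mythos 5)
Your proposal follows essentially the same route as the paper's own proof: invoke Proposition~\ref{prop:interpolation-value-check-law} with $j=\kappa(\upsilon)$, take the LP dual of $\mathscr{P}^{\dagger}_{LP}(x;\mathcal{D}_j)$ with exactly the dual variables $p$, $q$, $\{\rho_\theta\}$ (yielding the marginal constraints from the free $y(\theta),w(\theta)$ and the objective $\sum_\theta \hat v(\theta)p_\theta - Lq$), justify strong duality by primal feasibility and boundedness below, and read off the level set as the dual-feasibility region with the objective cut. The derivation matches the paper's dual $\mathsf{D}^{\dagger}(x;\mathcal{D}_j)$ and is, if anything, slightly more explicit about why the $\min\{\hat v(\theta_j),\cdot\}$ term drops out.
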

\begin{proof}
First fix $j = \kappa(\upsilon)$.
The dual of $\mathscr P_{\textsf{LP}}^{\dagger}(x;\,\mathcal{D}_{j})$ is as follows. First define variables $p\in \mathbb{R}^j_{\geq 0}$, $q\in \mathbb{R}$, and $\{\rho_\theta\}_{\theta\in\mathcal{D}_{j}}$ where $\rho_\theta\in \mathbb{R}^M_{\geq 0}\times \mathbb{R}^M_{\geq 0}$. The dual to $\mathscr P_{\textsf{LP}}^{\dagger}(x;\,\mathcal{D}_{j})$ is then:
\begin{subequations}
\begin{align}
\mathsf{D}^{\dagger}(x;\mathcal{D}_{j}) : \max_{p,q,\{\rho_\theta\}_{\theta\in\mathcal{D}_{j}} } \quad & \sum_{\theta\in \mathcal{D}_{j}} \hat{v}(\theta) \cdot p_\theta-L\,q\label{dual-law-1}\\
{\rm s.t.} \quad & \sum_{\theta\in \mathcal{D}_{j}} \rho_\theta^{\top} \theta_k -\vec x_k\leq q, \quad \forall k \in [K],&\label{dual-law-2}\\
& \sum_{\theta\in \mathcal{D}_{j}} p_\theta =1, \,p \in \mathbb{R}^j_{\geq 0},\, q\geq 0, &\label{dual-law-3}\\
& \vec{1}_M^\top \rho_\theta = p_\theta \vec{1}_M^\top,\,\rho_\theta \vec{1}_M  = p_\theta \vec{1}_M,\, \rho_\theta \in \mathbb{R}^M_{\geq 0}\times \mathbb{R}^M_{\geq 0}, \quad \forall \theta\in\mathcal{D}_{j}. &\label{dual-law-4}
\end{align}
\end{subequations}
We interpret constraint \eqref{dual-law-2} in the component-wise sense.

Problem $\mathscr P_{\textsf{LP}}^{\dagger}(x;\,\mathcal{D}_{j})$ is always feasible and its optimal value is lower bounded, so we have $\textsf{val}(\mathscr P_{\textsf{LP}}^{\dagger}(x;\,\mathcal{D}_{j})) = \textsf{val}(\mathsf{D}^{\dagger}(x;\mathcal{D}_{j}))$ for all $x \in {\cal X}$ by strong duality.
The conclusion then follows from Proposition \ref{prop:interpolation-value-check-law}.
\end{proof}

Theorem~\ref{thm:permutation-upper} shows that the permutation invariant upper level sets also have a polyhedral structure (with a polynomial number of variables and constraints).
With the upper level sets in hand, our strategy for solving $\mathscr{P}({\cal U}^{\dagger})$ is analogous to the base case.
First note $\mathscr{P}({\cal U}^{\dagger})$ is equivalent to
\begin{equation}
\label{prob:robust_permutation}
\max_{z \in \mathcal Z,\, \upsilon \in \mathbb{R}}\{ \upsilon : G(z) \in \mathcal{A}(\psi_{{\cal U}^{\dagger}}, \upsilon) \},
\end{equation}
where we want to find the largest value $\upsilon$ such that there exists $z \in {\cal Z}$ with $G(z) \in \mathcal{A}(\psi_{{\cal U}^{\dagger}}, \upsilon)$.

Fix $j \in [J]$, and introduce decision variables $\upsilon\in \mathbb{R}$, $z\in {\cal Z}$, $p\in \mathbb{R}^j$, $q\in \mathbb{R}$, and $\{\rho_\theta\}_{\theta\in\mathcal{D}_{j}}$ where $\rho_\theta\in \mathbb{R}^M\times \mathbb{R}^M$.
We define $\vec g_k(z) = (g_k(z, w_m))^M_{m=1}$ to be the vector of components of $G(z)$ corresponding specifically to component $k \in [K]$ of $x(\omega_m)$ over all groups $m \in [M]$.
Consider the following problem similar to $\mathscr{G}(\mathcal{D}_j)$:
\begin{subequations}\label{eq:binary-optimization-law}
\begin{align}
\mathscr G^{\dagger}(\mathcal{D}_{j}) : \max_{z,p,q,\rho,\upsilon} \quad & \upsilon\\
{\rm s.t.} \quad & \sum_{\theta\in \mathcal{D}_{j}} \hat{v}(\theta) \cdot p_\theta-L\,q\geq \upsilon,\\
& \sum_{\theta\in \mathcal{D}_{j}} \rho_\theta^{\top}\cdot \theta_k -\vec g_k(z)\leq q, \quad \forall k \in [K],\\
& \sum_{\theta\in \mathcal{D}_{j}} p_\theta =1, \,p \in \mathbb{R}^j_{\geq 0},\, q\geq 0,\\
& \vec{1}_M^\top \rho_\theta = p_\theta \vec{1}_M^\top,\,\rho_\theta \vec{1}_M  = p_\theta \vec{1}_M,\, \rho_\theta \in \mathbb{R}^M_{\geq 0}\times \mathbb{R}^M_{\geq 0}, \quad \forall \theta\in\mathcal{D}_{j},
\end{align}
\end{subequations}
which is a convex optimization problem.

We show in the following proposition that we can bound the optimal value of Problem~\eqref{prob:robust_permutation} using $\mathscr G^{\dagger}(\mathcal{D}_{j})$ for $j=\kappa(\upsilon)$.
This result follows from Theorem~\ref{thm:permutation-upper} and the same argument as Proposition~\ref{prop:binary}.

\begin{proposition}\label{prop:binary_permutation}
Choose level $\upsilon \in (-\infty, \upsilon_{\max}]$ and $j = \kappa(\upsilon)$, then $\max_{z\in{\cal Z}} \psi_{{\cal U}^{\dagger}}(G(z))\geq \upsilon$ if and only if $\textsf{val} (\mathscr G^{\dagger}(\mathcal{D}_{j}))\geq \upsilon$.
\end{proposition}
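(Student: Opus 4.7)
The plan is to mirror the proof of Proposition~\ref{prop:binary}, substituting the polyhedral characterization of $\mathcal{A}(\psi_{{\cal U}^{\dagger}},\upsilon)$ from Theorem~\ref{thm:permutation-upper} in place of Theorem~\ref{thm:upper_level}. The proposition is essentially a restatement that the feasibility of the upper-level-set problem coincides with the optimization formulation $\mathscr G^{\dagger}(\mathcal{D}_j)$ once we pick the correct data subset via $j=\kappa(\upsilon)$.

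First, I would reduce the LHS to a level-set membership. By Proposition~\ref{prop:level}(i), $\psi_{{\cal U}^{\dagger}}(G(z))\geq \upsilon$ is equivalent to $G(z)\in \mathcal{A}(\psi_{{\cal U}^{\dagger}},\upsilon)$. Hence $\max_{z\in{\cal Z}}\psi_{{\cal U}^{\dagger}}(G(z))\geq \upsilon$ holds if and only if there exists $z\in{\cal Z}$ with $G(z)\in\mathcal{A}(\psi_{{\cal U}^{\dagger}},\upsilon)$. Next, I invoke Theorem~\ref{thm:permutation-upper}, which characterizes $\mathcal{A}(\psi_{{\cal U}^{\dagger}},\upsilon)$ as the projection onto $x$ of the polyhedron defined by variables $(p,q,\{\rho_\theta\}_{\theta\in\mathcal{D}_{\kappa(\upsilon)}})$ together with the linear inequalities listed in that theorem, where $\vec x_k$ appears in the second family of constraints. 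Since $j=\kappa(\upsilon)$ by hypothesis, these are precisely the constraints of $\mathscr G^{\dagger}(\mathcal{D}_j)$ after the identification $\vec x_k\leftrightarrow \vec g_k(z)$.

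For the forward direction, pick $z^*\in{\cal Z}$ with $G(z^*)\in\mathcal{A}(\psi_{{\cal U}^{\dagger}},\upsilon)$ and extract auxiliary variables $(p,q,\rho)$ certifying membership via Theorem~\ref{thm:permutation-upper}. The tuple $(z^*,p,q,\rho,\upsilon)$ is then feasible in $\mathscr G^{\dagger}(\mathcal{D}_j)$, so $\textsf{val}(\mathscr G^{\dagger}(\mathcal{D}_j))\geq \upsilon$. For the reverse direction, let $(z^*,p,q,\rho,\upsilon^*)$ be an optimal/near-optimal feasible tuple for $\mathscr G^{\dagger}(\mathcal{D}_j)$ with $\upsilon^*\geq\upsilon$. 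The first constraint yields $\sum_\theta \hat v(\theta)\,p_\theta - Lq\geq \upsilon^*\geq \upsilon$, and the remaining constraints of $\mathscr G^{\dagger}(\mathcal{D}_j)$ are exactly the system in Theorem~\ref{thm:permutation-upper} at level $\upsilon$ with $x=G(z^*)$. Hence $G(z^*)\in \mathcal{A}(\psi_{{\cal U}^{\dagger}},\upsilon)$, which gives the desired inequality on the LHS.

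The only subtle point is the compatibility of the index $j$ across the two sides. Since the RHS uses the fixed index $j=\kappa(\upsilon)$, while a feasible tuple delivered by the RHS may achieve a larger value $\upsilon^*\geq\upsilon$, one might worry that the characterization of Theorem~\ref{thm:permutation-upper} at level $\upsilon^*$ would require a different (smaller) data subset $\mathcal{D}_{\kappa(\upsilon^*)}\subseteq\mathcal{D}_j$. This is not an issue here because I only need to conclude $G(z^*)\in\mathcal{A}(\psi_{{\cal U}^{\dagger}},\upsilon)$, and the constraint $\sum_\theta \hat v(\theta)p_\theta-Lq\geq \upsilon^*$ immediately implies $\sum_\theta \hat v(\theta)p_\theta-Lq\geq \upsilon$ with the same $(p,q,\rho)$ indexed by $\mathcal{D}_j=\mathcal{D}_{\kappa(\upsilon)}$, matching Theorem~\ref{thm:permutation-upper} exactly. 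This is the main place where the argument could go wrong, so I will be explicit about it; everything else is a direct transcription of the argument used for Proposition~\ref{prop:binary}.
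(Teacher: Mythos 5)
Your argument is correct and is essentially the paper's own route: the paper proves this proposition by invoking Theorem~\ref{thm:permutation-upper} and repeating the argument of Proposition~\ref{prop:binary}, which is exactly what you do (membership in $\mathcal{A}(\psi_{{\cal U}^{\dagger}},\upsilon)$ via the polyhedral characterization, then reading off feasibility of $\mathscr G^{\dagger}(\mathcal{D}_{\kappa(\upsilon)})$ in both directions). Your explicit handling of the index-compatibility point, noting that only the level-$\upsilon$ system with the fixed subset $\mathcal{D}_{\kappa(\upsilon)}$ is needed, is a harmless elaboration of the same reasoning.
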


% We can state Proposition \ref{prop:binary_permutation} in a more specific way to better suite our law-invariant binary search algorithm.

% \begin{corollary}\label{cor:binary-optimization-law}
% For any $t = 1,2,\ldots,J$, $\max_{z\in{\cal Z}} \psi_{{\cal U}^{\dagger}}(\vec G(z)) > \hat{v}(\theta_{j+1})$ if and only if $\textsf{val} (\mathscr G^{\dagger}(\mathcal{D}_{j})) > \hat{v}(\theta_{j+1})$.
% \end{corollary}

We now present the details of our binary search algorithm for Problem~\eqref{prob:robust_permutation}. It mirrors Algorithm~\ref{algo:binary} except now we replace each instance of $\mathscr G(\mathcal{D}_j)$ with the modified problem $\mathscr G^{\dagger}(\mathcal{D}_{j})$.

\begin{algorithm}
\SetAlgoLined
%\KwResult{Returns an optimal solution of Problem~\eqref{prob:robust_permutation}}
 Initialization: data sample $\mathsf{D} = \{(\theta, \hat{v}(\theta))\}_{\theta \in \Theta}$, $j_1 = J$, $j_2 = 1$;
 
 \While{$j_1\ne j_2$}{
  Set $j: = \lfloor \frac{j_1+j_2}{2} \rfloor$, and compute $\upsilon_{j} = \textsf{val}(\mathscr G^{\dagger}(\mathcal{D}_{j}))$ with optimal solution $z^*$;
  
  \lIf{$\upsilon_{j}\leq \hat{v}(\theta_{j+1})$}{
   set $j_2:=j+1$}
   \lElse{
   set $j_1:=j$}
 }
 Set $j: = \lfloor \frac{j_1+j_2}{2} \rfloor$, and compute $\upsilon_{j} = \textsf{val}(\mathscr G^{\dagger}(\mathcal{D}_{j}))$ with optimal solution $z^*$;
 
 \Return $z^*$ and $\psi_{{\cal U}^{\dagger}}(G(z^*)) = \min\{\upsilon_{j}, \hat{v}(\theta_j)\}$.

 \caption{Binary search for $\mathscr{P}({\cal U}^{\dagger})$}\label{algo:binary_permutation}
\end{algorithm}

The complexity of Algorithm~\ref{algo:binary_permutation} for solving Problem~\eqref{prob:robust_permutation} follows the same reasoning as Theorem~\ref{thm:binary}.

\begin{theorem}\label{thm:binary_permutation}
Algorithm~\ref{algo:binary_permutation} returns an optimal solution $z^*$ of $\mathscr{P}({\cal U}^{\dagger})$, after solving at most $\log J$ instances of Problem~\eqref{eq:binary-optimization-law}.
\end{theorem}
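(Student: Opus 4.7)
The plan is to reduce Theorem~\ref{thm:binary_permutation} to the same structural argument used in Theorem~\ref{thm:binary}, since all of the work specific to permutation invariance has already been absorbed into Theorem~\ref{thm:permutation-upper} and Proposition~\ref{prop:binary_permutation}. The only two facts I need are: (a) $\textsf{val}(\mathscr G^{\dagger}(\mathcal{D}_{j}))$ is non-decreasing in $j$; and (b) for $j=\kappa(\upsilon)$, feasibility in the original RO problem at level $\upsilon$ is certified exactly by $\textsf{val}(\mathscr G^{\dagger}(\mathcal{D}_{j}))\geq\upsilon$. Fact (a) follows by the same extension argument as in Observation~\ref{obs:monotone}: any feasible tuple $(z,\upsilon,p,q,\{\rho_\theta\})$ for $\mathscr G^{\dagger}(\mathcal{D}_{j'})$ with $j'\leq j$ extends to one for $\mathscr G^{\dagger}(\mathcal{D}_{j})$ by zero-padding $p$ and $\rho$ over the extra indices, preserving the value. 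Fact (b) is exactly Proposition~\ref{prop:binary_permutation}.

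Next, I would show correctness by tracking the loop invariant that the optimal index $j^{*} \triangleq \kappa(\upsilon^{*})$ (where $\upsilon^{*}=\textsf{val}(\mathscr{P}(\mathcal{U}^{\dagger}))$) always lies in the current bracket $[j_{2},j_{1}]$. The invariant holds at initialization since $1\leq j^{*}\leq J$. At each iteration we compute $\upsilon_{j}=\textsf{val}(\mathscr G^{\dagger}(\mathcal{D}_{j}))$ for the midpoint $j$. If $\upsilon_{j}\leq \hat{v}(\theta_{j+1})$, then by monotonicity (a) no smaller index can yield a value exceeding $\hat{v}(\theta_{j+1})$, so any $j'\leq j$ would correspond to a level $\upsilon_{j'}\leq\hat{v}(\theta_{j+1})$, forcing $\kappa(\upsilon_{j'})\geq j+1$; hence $j^{*}\geq j+1$ and we may set $j_{2}:=j+1$. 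Otherwise $\upsilon_{j}>\hat{v}(\theta_{j+1})$, and together with Proposition~\ref{prop:binary_permutation} this witnesses an attainable level above $\hat{v}(\theta_{j+1})$, so $j^{*}\leq j$ and we may set $j_{1}:=j$. Either way the bracket shrinks by (at least) a factor of two, and the invariant is preserved.

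The complexity bound is then immediate: standard binary search on an interval of length $J$ terminates in $\lceil \log_{2} J \rceil$ midpoint evaluations, each evaluation being a single solve of Problem~\eqref{eq:binary-optimization-law}. When the loop exits with $j_{1}=j_{2}=j^{*}$, one final call to $\mathscr G^{\dagger}(\mathcal{D}_{j^{*}})$ yields the optimal $z^{*}$. The returned value $\min\{\upsilon_{j^{*}},\hat v(\theta_{j^{*}})\}$ equals $\psi_{\mathcal{U}^{\dagger}}(G(z^{*}))$ by Proposition~\ref{prop:interpolation-value-check-law} applied at $x=G(z^{*})$ (the minimum cap is needed because the worst-case evaluation at any point never exceeds $\hat v(\theta_{j})$ for $j=\kappa(\psi_{\mathcal{U}^{\dagger}}(G(z^{*})))$).

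The only place that could cause trouble is verifying that the transition rule based on comparing $\upsilon_{j}$ with $\hat{v}(\theta_{j+1})$ correctly identifies which side of $j$ the true optimal index lies on; this hinges on the interplay between the monotonicity in (a) and the level-selection rule in Definition~\ref{Def:level-select}. I expect this to be the main obstacle, but it is handled by the same case analysis as in the proof of Theorem~\ref{thm:binary}, so no new ideas beyond Proposition~\ref{prop:binary_permutation} and monotonicity of $\textsf{val}(\mathscr G^{\dagger}(\mathcal{D}_{j}))$ are required.
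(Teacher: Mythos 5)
Your proposal is correct and follows essentially the same route as the paper, which proves this theorem simply by invoking the same reasoning as Theorem~\ref{thm:binary}: monotonicity of $\textsf{val}(\mathscr G^{\dagger}(\mathcal{D}_j))$ in $j$ (the analogue of Observation~\ref{obs:monotone}, which you rightly supply via zero-padding), Proposition~\ref{prop:binary_permutation} for level certification, and the standard binary-search bracket argument. Your write-up is in fact more explicit than the paper's one-line proof, and the steps you flag as potential obstacles are handled exactly as in the base case.
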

\noindent
We note the same order of complexity in Theorem \ref{thm:binary_permutation} that we saw in Theorem~\ref{thm:binary}. Like the base case, the binary search algorithm for Problem~\eqref{prob:robust_permutation} is highly scalable in both the size of the problem instance and the level $J$ (which grows with the size of the dataset $\mathsf{D}$). The difference between these two algorithms is reflected in the form of the feasibility problems that are solved for each level.

\section{Proofs for Section~\ref{sec:preliminaries}}

\subsection{Proof of Proposition~\ref{prop:quasiconcave}}

For $z_1, z_2 \in {\cal Z}$ and $\lambda \in [0,1]$, we have:
\begin{align*}
\rho_f(\lambda z_1 + (1-\lambda) z_2) = & f(G(\lambda z_1 + (1-\lambda) z_2))\\
\geq & f(\lambda G(z_1) + (1-\lambda) G(z_2))\\
\geq & \min\{\rho_f(z_1), \rho_f(z_2)\},
\end{align*}
where the first inequality uses monotonicity of $f$ and concavity of $G$, and the second inequality uses quasiconcavity of $f$.

\section{Proofs for Section~\ref{sec:upper}}

\subsection{Proof of Proposition~\ref{prop:level}}

(i) This part is immediate by the definition of upper level set:
$$
\sup\{v\in \mathbb{R} : x \in {\cal A}(f,\upsilon)\} = 
\sup\{v\in \mathbb{R} : 
f(x)\geq v\} = f(x).
$$

(ii) We have the following equalities: 
\begin{align*}
{\cal A}(\psi_{\cal F}, \upsilon) = & \{x \in {\cal X} : \psi_{\cal F}(x) \geq \upsilon\}\\
= & \{x \in {\cal X} : \inf_{f \in {\cal F}}f(x) \geq \upsilon\}\\
= & \{x \in {\cal X} : f(x) \geq \upsilon,\, \forall f \in {\cal F}\}\\
= & \cap_{f \in {\cal F}} {\cal A}(f, \upsilon),
\end{align*}
at level $\upsilon \in \mathbb{R}$.

\subsection{Proof of Proposition~\ref{prop:disjunctive}}

We have that $\psi_{{\cal U}}(x)$ is equal to the optimal value of:
\begin{subequations}
\label{prob:interpolation_full}
\begin{align}
\mathscr{P}(x; \mathcal{D}_J) : \min_{\upsilon, \xi} \quad & \upsilon\\
{\rm s.t.} \quad & \upsilon + \max\{\langle \xi, \theta - x \rangle, 0\} \geq \hat{v}(\theta),\, \forall \theta \in \mathcal{D}_J,\label{prob:interpolation_full-2}\\
& \xi \geq 0,\, \|\xi\|_1 \leq L.
\end{align}
\end{subequations}
Let $\upsilon^* = \textsf{val}(\mathscr{P}(x; \mathcal{D}_J))$. Constraints~\eqref{prob:interpolation_full-2} are not binding for all $\theta$ with $\hat{v}(\theta) < \upsilon^*$. By definition of $\kappa(\upsilon^*)$, $\mathscr{P}(x; \mathcal{D}_{\kappa(\upsilon^*)})$ is then equivalent to $\mathscr{P}(x; \mathcal{D}_J)$ (it has the same optimal value and set of optimal solutions).

\subsection{Proof of Proposition~\ref{prop:interpolation-value-check}}

First we have $\upsilon = \textsf{val}(\mathscr{P}(x;\,\mathcal{D}_j))$ by Proposition~\ref{prop:disjunctive}.
There are two cases to check: (i) $\hat{v}(\theta_{j+1})< \upsilon<\hat{v}(\theta_j)$; and (ii) $\upsilon=\hat{v}(\theta_j)$.

For the first case, since $\upsilon < \hat{v}(\theta)$ for all $\theta \in {\cal D}_j$, we must have $\langle \xi, \theta-x \rangle > 0$ for all $\theta \in {\cal D}_j$. It follows that $\mathscr{P}(x;\,\mathcal{D}_j)$ and $\mathscr{P}_{LP}(x;\,\mathcal{D}_j)$ are equivalent, and $\upsilon = \textsf{val}(\mathscr{P}_{LP}(x;\,\mathcal{D}_j))$.
For the second case, we have $\textsf{val}(\mathscr{P}_{LP}(x;\,\mathcal{D}_j)) \geq \textsf{val}(\mathscr{P}(x;\,\mathcal{D}_j))$, and so $\textsf{val}(\mathscr{P}_{LP}(x;\,\mathcal{D}_j)) \geq \hat{v}(\theta_j)$.

\subsection{Proof of Theorem~\ref{thm:upper_level}}

Let $\mathscr{D}_{LP}(x;\mathcal{D}_j)$ denote the LP dual of $\mathscr{P}_{LP}(x;\,\mathcal{D}_j)$.
Problem $\mathscr{P}_{LP}(x;\,\mathcal{D}_j)$ is automatically feasible, e.g., take $\upsilon = \max\{\hat{v}(\theta) : \theta \in {\cal D}_j\}$ and $\xi = 0$. Furthermore, under [Lip] the optimal value of $\mathscr{P}_{LP}(x;\,\mathcal{D}_j)$ is lower bounded by
$
\min\{\hat{v}(\theta) - L \|\theta - x\|_1 : \theta \in {\cal D}_j\} > -\infty.
$
Thus, LP strong duality holds between $\mathscr{P}_{LP}(x;\,\mathcal{D}_j)$ and $\mathscr{D}_{LP}(x;\,\mathcal{D}_j)$, and we have $\textsf{val}(\mathscr{P}_{LP}(x;\,\mathcal{D}_j)) = \textsf{val}(\mathscr D_{LP}(x;\,\mathcal{D}_j))$ for all $x \in {\cal X}$.

For $\hat{v}(\theta_{j+1}) < \upsilon\leq \hat{v}(\theta_j)$ and $j \in [J]$, by Proposition \ref{prop:interpolation-value-check} {\color{black}we know that $x \in \mathcal{A}(\psi_{{\cal U}}, \upsilon)$ if and only if $\textsf{val}(\mathscr{P}_{LP}(x;\,\mathcal{D}_j))\geq \upsilon$.} By strong duality, this is equivalent to the inequality $\textsf{val}(\mathscr{D}_{LP}(x;\mathcal{D}_j)) \geq \upsilon$. However, since $\mathscr{D}_{LP}(x;\mathcal{D}_j)$ is a maximization problem, this latter inequality reduces to the feasibility problem:
\[
\left\{(p,\,q) : \sum_{\theta\in \mathcal{D}_j} \hat{v}(\theta) p_\theta-L\,q \geq \upsilon,\,\sum_{\theta\in \mathcal{D}_j} \theta \cdot p_\theta -x\leq q\cdot {\vec 1}_{N},\,\sum_{\theta\in \mathcal{D}_j} p_\theta =1,\,p \in \mathbb{R}^{j}_{\geq 0},\, q\geq 0\right\}.
\]
Since $\left( \sum_{\theta\in \mathcal{D}_j} \hat{v}(\theta) \cdot p_\theta - \upsilon \right)/L \geq q$ and $\sum_{\theta\in \mathcal{D}_j} \theta \cdot p_\theta -x\leq q \cdot {\vec 1}_{N}$, we can eliminate $q$ from the above feasibility problem to obtain:
\[
\left\{p : \sum_{\theta\in \mathcal{D}_j} \theta \cdot p_\theta -x \leq \left( \sum_{\theta\in \mathcal{D}_j} \hat{v}(\theta) \cdot p_\theta - \upsilon \right)/L\cdot {\vec 1}_{N},\,\sum_{\theta\in \mathcal{D}_j} p_\theta =1,\,p \in \mathbb{R}^{j}_{\geq 0}\right\}.
\]
We then have
\[
\sum_{\theta\in \mathcal{D}_j} \theta \cdot p_\theta -\left(\sum_{\theta\in \mathcal{D}_j} \hat{v}(\theta) \cdot p_\theta-\upsilon\right)/L \cdot {\vec 1}_{N} = \sum_{\theta\in \mathcal{D}_j}  \left(\theta-\hat{v}(\theta)/L \right)p_\theta+\upsilon/L \cdot {\vec 1}_{N},
\]
and the desired result follows from the definition of $\tilde{\theta} = \theta-\hat{v}(\theta)/L$.

{\color{black}
\section{Additional Results for Non-Monotone Functions}\label{sec:nonmonotone}
We can extend our results from Sections~\ref{sec:upper} and \ref{sec:binary} to cover non-monotone quasiconcave functions. Let $\mathcal{F}_{\text{NQCo}} \subset \mathfrak{F}$ be the set of all quasiconcave functions $f : {\cal X} \rightarrow \mathbb{R}$ (which are not necessarily monotone). We modify the definition of our ambiguity set to: 
$
{\cal U} = {\cal U}(\mathsf{D}, L) = \{f \in \mathcal{F}_{\text{NQCo}} \cap {\cal F}_{\text{Lip}}(L) : f(\theta) \geq \hat{v}(\theta),\, \forall \theta \in \Theta\}.
$
For this setting, the non-monotone counterpart of $\mathscr{P}_{LP}(x;\,\mathcal{D}_j)$ is obtained by simply dropping the constraint $\xi \geq 0$ to get:
\begin{subequations}
\label{eq:descent-nonmonotone}
\begin{align}
\mathscr{P}_{LP}(x;\,\mathcal{D}_j) : \min_{\upsilon \in \mathbb{R}, \xi \in \mathbb{R}^N} \quad & \upsilon \label{eq:descent-nonmonotone-1}\\
\textrm{s.t.} \quad & \upsilon+\left\langle \xi, \theta-x\right\rangle \geq \hat{v}(\theta),\,\forall \theta \in \mathcal{D}_j, \label{eq:descent-nonmonotone-2}\\
& \left\|\xi\right\|_{1} \leq L. \label{eq:descent-nonmonotone-3}
\end{align}
\end{subequations}
We now use the above form of $\mathscr{P}_{LP}(x;\,\mathcal{D}_j)$ to compute $\psi_{{\cal U}}(x)$ for fixed $x \in {\cal X}$.
The dual to Problem $\mathscr{P}_{LP}(x;\,\mathcal{D}_j)$ is:
\begin{subequations}
\begin{align*}
\mathscr{D}_{LP}(x;\,\mathcal{D}_j):\quad
\max_{\{p_{\theta}\}_{\theta\in\mathcal{D}_j}, t\in \mathbb{R}_{\geq 0}} \quad
& \sum_{\theta\in\mathcal{D}_j} p_{\theta}\,\hat{v}(\theta)
\;-\;
L t\\
\textrm{s.t.}\quad & -t \vec 1_N \leq \sum_{\theta\in\mathcal{D}_j} p_{\theta}\,(\theta-x) \leq t \vec 1_N,\\
& \sum_{\theta\in\mathcal{D}_j} p_{\theta} = 1,\\
& p_{\theta} \ge 0,\quad \forall \theta\in\mathcal{D}_j.
\end{align*}
\end{subequations}
Then, the constraint $\textsf{val}(\mathscr{D}_{LP}(x;\mathcal{D}_j)) \geq \upsilon$ can be reformulated as the system of inequalities:
\begin{align*}
\sum_{\theta\in\mathcal{D}_j} p_{\theta}\,\hat{v}(\theta)
\;-\;
L t \geq \upsilon,\\
-t \vec 1_N \leq \sum_{\theta\in\mathcal{D}_j} p_{\theta}\,(\theta-x) \leq t \vec 1_N, \\
\sum_{\theta\in\mathcal{D}_j} p_{\theta} = 1,\\
p_{\theta} \ge 0,\quad \forall \theta\in\mathcal{D}_j,
\end{align*}
which determine a convex (polyhedral) set.
Eliminating the dual variable $t$, we obtain the non-monotone counterpart of Problem $\mathscr G(\mathcal{D}_j)$ which is
\begin{subequations}
\begin{align*}
\mathscr G(\mathcal{D}_j) : \max_{z \in {\cal Z},\, \upsilon \in \mathbb{R},\, p \in \mathbb{R}_{\geq 0}^j} \quad & \upsilon\\
\text{s.t.} \quad & G(z) \geq \sum_{\theta\in \mathcal{D}_j} \left(\theta - (\hat{v}(\theta)/L)\vec 1_N \right) \cdot p_\theta + (\upsilon/L) {\vec 1}_{N},\\
& G(z) \leq \sum_{\theta\in \mathcal{D}_j} \left(\theta + (\hat{v}(\theta)/L)\vec 1_N\right) \cdot p_\theta - (\upsilon/L) {\vec 1}_{N},\\
& \sum_{\theta\in \mathcal{D}_j} p_\theta =1.
\end{align*}
\end{subequations}
Binary search (analogous to Algorithm~\ref{algo:binary}) can then be applied to compute the robust optimal solution using the modified $\mathscr G(\mathcal{D}_j)$.
% We would do binary search by solving instances of the above with $x = G(z)$. Does this then require linear $G(z)$, since it appears in both GEQ and LEQ inequalities? But it seems otherwise the approach works for non-monotone $f$.
}
\section{Proofs for Section~\ref{sec:binary}}

\subsection{Proof of Proposition~\ref{prop:binary}}

For any $\upsilon \in (-\infty, \upsilon_{\max}]$ and $j \in [J]$, we define the feasibility problem:
\begin{subequations}
\label{prob:robust-F-D-j}
\begin{align*}
(\mathscr F_{\upsilon}(\mathcal{D}_j)) \quad & \text{Find} \quad z \in {\cal Z},\, p \in \mathbb{R}_{\geq 0}^j\\
\text{s.t.} \quad & G(z) \geq \sum_{\theta\in \mathcal{D}_j} \tilde{\theta} \cdot p_\theta + (\upsilon/L) {\vec 1}_{N},\\
& \sum_{\theta\in \mathcal{D}_j} p_\theta =1,
\end{align*}
\end{subequations}
in the variables $(z, p)$.
By Theorem~\ref{thm:upper_level}, feasibility of the inequality $\psi_{{\cal U}}(G(z))\geq v$ for $z \in {\cal Z}$ is equivalent to feasibility of $\mathscr F_\upsilon(\mathcal{D}_j)$ for $j = \kappa(\upsilon)$. We then have the following chain of equivalences:
\begin{align*}
    \left\{\exists z \in {\cal Z},\, \psi_{{\cal U}}(G(z))\geq \upsilon\right\} &\iff \left\{\mathscr F_\upsilon(\mathcal{D}_j) \text{ feasible}\right\},\\
    \left\{\max_{z\in{\cal Z}} \psi_{{\cal U}}(G(z))\geq \upsilon\right\} &\iff \left\{\max \left\{\upsilon' \mid \mathscr F_{\upsilon'}(\mathcal{D}_j) \text{ feasible}\right\}\geq \upsilon\right\},\\
    \left\{\max_{z\in{\cal Z}} \psi_{{\cal U}}(G(z))\geq \upsilon\right\} &\iff \{\textsf{val}(\mathscr G(\mathcal{D}_j))\geq \upsilon\},
\end{align*}
using the fact that $\mathscr G(\mathcal{D}_j)$ is equivalent to $\max \left\{\upsilon' \mid \mathscr F_{\upsilon'}(\mathcal{D}_j) \text{ feasible}\right\}$.

\subsection{Proof of Theorem~\ref{thm:binary}}

Let $\nu$ denote the optimal value of Problem~{\color{black} \eqref{prob:robust_main}}. By Proposition \ref{prop:binary}, $\nu \geq \hat{v}(\theta_j)$ if and only if $\textsf{val} (\mathscr G(\mathcal{D}_j))\geq \hat{v}(\theta_j)$.
Hence the binary search procedure in Algorithm \ref{algo:binary} finds $\zeta \in [J]$ such that $\hat{v}(\theta_{\zeta+1}) < \nu \leq \hat{v}(\theta_{\zeta})$ within $O(\log J)$ iterations. 
%{\color{red}HX: it might facilitate reading to give the details as to how the estimate of the number of iterations is derived and gives the smallest constant C such that $|O(\log J)|\leq C\log J$. I think C depends on the elicited data. Can we give an estimate of the worst-case. For INFORMs journal, giving information ass such will perhaps impress referees. WBH: In this case, I think it should just be $C=1$ since there are exactly $J$ alternatives to check in binary search.}

To see this, note that for any $j \leq \zeta-1$, we have $\nu \leq \hat{v}(\theta_{\zeta}) \leq \hat{v}(\theta_{j+1})$. That is, by Proposition~\ref{prop:binary} we have $\textsf{val} (\mathscr G(\mathcal{D}_{j+1}))\leq \hat{v}(\theta_{j+1})$. Therefore, $\textsf{val} (\mathscr G(\mathcal{D}_{j}))\leq \hat{v}(\theta_{j+1})$ by the monotonicity of $\textsf{val} (\mathscr G(\mathcal{D}_{j}))$ in $j$ by Observation~\ref{obs:monotone}. On the other hand, for any $j \geq \zeta + 1 $, we have $\nu > \hat{v}(\theta_{\zeta+1}) \geq  \hat{v}(\theta_j)$. Therefore, by Proposition~\ref{prop:binary} we have $\textsf{val} (\mathscr G(\mathcal{D}_{j})) > \hat{v}(\theta_j) \geq \hat{v}(\theta_{j+1})$. This proves that binary search returns the correct index $\zeta$.
%{\color{red}HX: Can you explain with a bit more details? I have asked Jian before but have forgotten now.} \jw{JW: I have added additional details.}
Let $(\tilde z^*, \tilde p^*, \tilde \upsilon^*)$ be the optimal solution of $\mathscr G(\mathcal{D}_{\zeta})$ for this $\zeta$. We want to show that $\tilde z^*$ is an optimal solution of {\color{black}Problem~\eqref{prob:robust_main}}.

First, let $\upsilon^* = \psi_{{\cal U}}(G(z^*))$ by Proposition~\ref{prop:binary} so $\nu \geq \upsilon^*$. Suppose $\nu > \upsilon^*$, then there is some $\tilde{z} \in {\cal Z}$ with $\psi_{{\cal U}}(G(\tilde z)) = \nu > \upsilon^*$. It follows that $(\tilde z, \tilde \nu, \tilde p)$ are feasible for $\mathscr G(\mathcal{D}_{\tilde \zeta})$ for $\tilde \zeta = \kappa(\nu) \leq \zeta$. However, this contradicts optimality of $(\tilde z^*, \tilde p^*, \tilde \upsilon^*)$ being the optimal solution of $\mathscr G(\mathcal{D}_{\zeta})$, since any feasible solution for $\mathscr G(\mathcal{D}_{\zeta'})$ for $\zeta' \leq \zeta$ can be extended to a feasible solution for $\mathscr G(\mathcal{D}_{\zeta})$ with the same optimal value.

\end{document}